\documentclass[11pt,reqno]{amsart}

\usepackage[T1]{fontenc}
\usepackage[utf8]{inputenc}
\usepackage{amssymb}
\usepackage[shortlabels]{enumitem}
\usepackage{graphicx}
\usepackage{booktabs}
\usepackage{array}
\usepackage{float}
\usepackage{xcolor}
\usepackage[british]{babel}
\usepackage[margin=1.15in]{geometry}
\usepackage[hidelinks]{hyperref}
\usepackage{url}
\usepackage{comment}

\newenvironment{ack}
  {\par\medskip\noindent\textbf{Acknowledgements.}\enskip\ignorespaces}{\par}
\newenvironment{funding}
  {\par\medskip\noindent\textbf{Funding.}\enskip\ignorespaces}{\par}

\newcommand{\vecs}{\boldsymbol{s}}

\newcommand{\vecv}{\boldsymbol{v}}

\newcommand{\vecmu}{\boldsymbol{\mu}}

\newcommand{\veco}{\boldsymbol{0}}
\newcommand{\vecone}{\boldsymbol{1}}

\newcommand{\vecg}{\boldsymbol{g}}

\newcommand{\Mo}{\mathcal{M}}
\newcommand{\statespace}{\Omega}

\newcommand{\IFam}{\mathfrak{I}}
\newcommand{\mci}{\mathfrak{I}^{\mathrm{Par-Is}}}
\newcommand{\mcib}{\mathfrak{I}^{\mathrm{B-Par-Is}}}
\newcommand{\mcp}{\mathfrak{I}^{q-\mathrm{Potts}}}
\newcommand{\mcpt}{\mathfrak{I}^{2-\mathrm{Potts}}}
\newcommand{\mca}{\mathfrak{I}^{\mathrm{And-Is}}}
\newcommand{\sset}{spin value set }

\newcommand{\PhiIs}{\Phi_{\mathrm{Par-Is}}}
\newcommand{\PhibIs}{\Phi_{\mathrm{B-Par-Is}}}
\newcommand{\PhiPotts}{\Phi_{\mathrm{q-Potts}}}
\newcommand{\PhiAnd}{\Phi_{\mathrm{And-Is}}}

\theoremstyle{plain}
\newtheorem{theorem}{Theorem}[section]
\newtheorem{proposition}[theorem]{Proposition}

\newtheorem{corollary}[theorem]{Corollary}

\theoremstyle{definition}
\newtheorem{definition}[theorem]{Definition}
\newtheorem{example}[theorem]{Example}
\newtheorem{remark}[theorem]{Remark}

\numberwithin{equation}{section}

\begin{document}

\title[Higher-order spin models via hypergraphs and the Tutte polynomial]
      {A formal framework for higher-order spin models via hypergraphs, polymatroids, and the Tutte polynomial}

\author[K. Berrekkal]{Khallil Berrekkal}
\address{Korteweg-de Vries Institute for Mathematics, University of Amsterdam,
  Science Park 105-107, 1098 XG Amsterdam, The Netherlands}
\email{k.berrekkal@uva.nl}

\author[J. A. Ellis-Monaghan]{Joanna A. Ellis-Monaghan}
\address{Korteweg-de Vries Institute for Mathematics, University of Amsterdam,
  Science Park 105-107, 1098 XG Amsterdam, The Netherlands}
\email{j.a.ellismonaghan@uva.nl}

\author[M. Moody]{Merijn Moody}
\address{Korteweg-de Vries Institute for Mathematics, University of Amsterdam,
  Science Park 105-107, 1098 XG Amsterdam, The Netherlands;
  Institute of Physics, University of Amsterdam, Science Park 904,
  1098 XH Amsterdam, The Netherlands;
  Dutch Institute for Emergent Phenomena, University of Amsterdam,
  1090 GL Amsterdam, The Netherlands}
\email{m.moody@uva.nl}

\author[C. de Mulatier]{Cl\'elia de Mulatier}
\address{Institute for Theoretical Physics, University of Amsterdam,
  Science Park 904, 1098 XH Amsterdam, The Netherlands;
  Informatics Institute, University of Amsterdam, LAB42,
  1098 XH Amsterdam, The Netherlands;
  Dutch Institute for Emergent Phenomena, University of Amsterdam,
  1090 GL Amsterdam, The Netherlands}
\email{c.m.c.demulatier@uva.nl}

\subjclass[2020]{82B20; 05C31, 05C65, 05B35}

\keywords{hypergraph, polymatroid, Tutte polynomial, Potts model, Ising model,
  partition function, deletion-contraction, maximum entropy}

\begin{abstract}
We develop a rigorous mathematical framework for statistical mechanics models on hypergraphs, and give conditions for lifting the classical connection between Potts model partition functions and the Tutte polynomial from graphs to hypergraphs. We define hypergraphical models and their associated partition functions, and extend these to special classes of models induced by families of interaction functions. For boolean interaction families, whose interaction functions map to $\{0,1\}$, we show that the partition function is determined by a combinatorial rank function, and we establish sufficient conditions for a hypergraph deletion-contraction recurrence and for when the rank function defines a polymatroid. We illustrate the theory by applying it to three hypergraph interaction families: Parity Ising, Delta Potts, and And Ising. The induced hypergraphical models are not isomorphic to each other, but the first two reduce to the same graphical Ising models. We identify three polymatroids naturally associated with hypergraphs for these models: respectively, the binary matroid of the incidence matrix over $\mathbb{F}_2$, the hypergraphical polymatroid, and the boolean polymatroid. For graphs, the first two reduce to the classical graphical matroid, and their partition functions recover the multivariate Tutte polynomial. The Tutte polynomial thus admits at least two distinct generalizations for hypergraphs, both satisfying a deletion-contraction recurrence: the Tutte polynomial of the binary matroid of the hypergraph incidence matrix, and a multivariate version of the Poincar\'e polynomial of the hypergraphical polymatroid. The partition functions of And Ising models are likewise multivariate versions of the Poincar\'e polynomial, here of the boolean polymatroid. These examples illustrate the much greater range of hypergraphical models and underscore the need for the unifying theory.
\end{abstract}

\maketitle

\section{Introduction}
\label{sec:intro}
Motivated by the increasing applications of higher-order models in statistical physics, we provide a rigorous mathematical formalism for spin models on hypergraphs.
For graphs, the connection between the Tutte polynomial and the partition function of the Ising and Potts models is well established, and has been historically highly fruitful (see~\cite{FK72} for the original papers and e.g.~\cite{Bax82, beaudin2010little, Big93, Bol98, zbMATH05956368, Loe10, Tut84, Wel93, WM00, Sok05} for more recent overviews). New research has begun to extend this fertile connection to higher-order systems via hypergraphs and polymatroids (see \cite{kalman2013version,  bernardi2022universal,  guan2023deletion, Berrekkal2026, chavez2021new}). However, to our knowledge no theory of polymatroids and deletion-contraction yet exists 
that spans both general classes of spin models and models on hypergraphs. We build such a theory for the class of spin models whose local energy terms take binary values (such as higher-order Ising and Potts models). For these we show that the partition function is the generating function of a combinatorial invariant of the hypergraph, which we call its \emph{rank function}. We then give conditions under which this generating function satisfies a deletion--contraction recurrence and the rank function is a polymatroid. Our results are a new set of mathematical tools available
for physics applications and a precisely formulated foundation to support further mathematical development.

\subsection{Background}
\label{sec:background}
Statistical mechanics models such as the Ising model~\cite{LenzBeitragZV, isingBeitragZurTheorie1925} and Potts model~\cite{wu1982potts} have been cornerstones of physics for nearly a century. Over time, these models have become increasingly sophisticated, accommodating external fields, edge-dependent interaction energies, and generally increasingly complex interactions. These models were introduced in condensed matter~\cite{Bax82}, where they are mainly studied on lattices, but are now used in many different fields, in particular in the context of statistical modeling of complex systems~\cite{Nguyen2017}, where they are studied more generally on graphs (such as in the reconstruction of protein interactions~\cite{Cocco2018}, the modeling of functional connectivity from brain neuronal activity~\cite{savin2017maximum}, or the reconstruction of psychological networks from psychometric data~\cite{borsboom2021network}).
Recently, there has been a surge of interest in models with high-order interactions, in particular in complex systems (see survey~\cite{battiston2021physics}), where the interactions are not only between pairs of vertices, but among multiple vertices, i.e., models on hypergraphs. On the mathematical side, a parallel line of work has begun to develop the combinatorial counterpart of this shift, extending the Tutte polynomial and its deletion--contraction theory from matroids to polymatroids and hypergraphs~\cite{kalman2013version, bernardi2022universal, guan2023deletion, Berrekkal2026, chavez2021new}.

Let us
briefly recall the classical setup. Consider a system of random variables $s_i$, called \emph{spins}, placed on the vertices of a graph $G = (V,E)$, each taking values in a finite set $S$ with $|S| = q$. The space of configurations of this system is $\statespace = S^{|V|}$. The energy of a configuration $\vecs \in \statespace$ is described by a Hamiltonian $H(\vecs; G, \vecg)$, which depends on a vector of parameters $\vecg$ and on the graph $G$. The Hamiltonian determines a probability distribution over configurations,
\[
    p(\vecs \mid G, \vecg) = \frac{e^{-H(\vecs;\,G, \vecg)}}{Z(G; \vecg)},
    \qquad
    Z(G; \vecg) = \sum_{\vecs \in \statespace} e^{-H(\vecs; G, \vecg)},
\]
where the normalizing factor $Z(G; \vecg)$ is the \emph{partition function}. This is the Boltzmann distribution: it describes the equilibrium statistics of the spin system, with lower-energy configurations more likely to occur. The partition function encodes the statistical properties of the system and is therefore the central object of study.

For example, the classical Ising model~\cite{LenzBeitragZV, isingBeitragZurTheorie1925} has spins taking values in $S = \{-1, +1\}$ and Hamiltonian
\[
H_{\mathrm{Ising}}(\vecs; G, \vecg) = \sum_{\{i,j\} \in E} g_{ij}\, s_i s_j.
\]
The classical $q$-state Potts model~\cite{wu1982potts} has spins taking values in $S = \{1, \dots, q\}$ and Hamiltonian
\[
H_{\mathrm{Potts}}(\vecs; G, \vecg) = \sum_{\{i,j\} \in E} g_{ij}\, \delta(s_i, s_j),
\]
where $\delta(s_i, s_j) = 1$ if $s_i = s_j$ and $0$ otherwise. For both models, the Hamiltonian is defined by a local interaction energy of the form $g_{ij} \phi^{ij}(s_i,s_j)$ for each edge $(i,j)\in E$, where $g_{ij}$ is a parameter and $\phi^{ij}(s_i,s_j)$ is a function that depends only on the spins at the vertices of the edge $(i,j)$.
For $q = 2$, the two models are related by a change of variables. The bijection $\tau \colon \{1,2\} \to \{-1,+1\}$ given by $\tau(s) = 2s - 3$ satisfies $\tau(s_i)\tau(s_j) = 2\delta(s_i, s_j) - 1$, so that
\[
H_{\mathrm{Potts}}(\vecs; G, \vecg) = \frac{1}{2} H_{\mathrm{Ising}}(\hat{\tau}(\vecs); G, \vecg) + \frac{1}{2}\sum_{\{i,j\} \in E} g_{ij},
\]
where $\hat{\tau}$ applies $\tau$ componentwise. The partition functions are then related by
\begin{equation}
\label{eq:introequiv}
Z_{\mathrm{Potts}}(2, G; \vecg) = \exp\!\left(\tfrac{1}{2}\textstyle\sum_{\{i,j\} \in E} g_{ij}\right)\, Z_{\mathrm{Ising}}\!\left(G; \tfrac{\vecg}{2}\right).
\end{equation}

A central object on the combinatorial side is the \emph{multivariate Tutte polynomial} of a graph $G = (V,E)$, defined by
\[
\widetilde{Z}(q, \vecv; G) = \sum_{A \subseteq E} q^{\kappa(A)} \prod_{e \in A} v_e,
\]
where $\kappa(A)$ denotes the number of connected components of the spanning subgraph $(V,A)$ and $\vecv = (v_e)_{e \in E}$ are formal variables~\cite{Sok05}. This polynomial depends on $G$ only through its graphical matroid. The celebrated Fortuin--Kasteleyn representation~\cite{FK72} establishes that the Potts model partition function is an evaluation of the multivariate Tutte polynomial: setting $v_e = e^{g_e} - 1$ one obtains $Z_{\mathrm{Potts}}(q, G; \vecg) = \widetilde{Z}(q, \vecv; G)$. For other works on this connection and more recent overviews, see e.g.~\cite{Bax82, beaudin2010little, Big93, Bol98, Loe10, Tut84, Wel93, WM00, Sok05}.

The equivalence between the Potts model partition function and the multivariate Tutte polynomial catalyzed a fruitful cross-pollination between statistical mechanics and matroid theory \cite{beaudin2010little, WM00, ellis-monaghanHandbookTuttePolynomial2022}. For example, the famous Kramers-Wannier duality can be naturally understood as an instance of matroid duality, while the study of the polynomial's complex zeros allowed researchers to map complete complex-temperature phase diagrams \cite{wegnerDualityGeneralizedIsing1971, CHANG2001234, changExactPottsModel2002, changExactPottsModel2004, salasAbsencePhaseTransition1997, jacobsenComplextemperaturePhaseDiagram2006, biggsEquimodularCurves2002, changPartitionFunctionZeros2006, chenPartitionFunctionZeros1996, martinPottsModelsRelated1991, martinZerosPartitionFunction1986}. Another application area is the computational complexity of the partition functions of Potts models. In general, computing the partition function of a Potts model on a graph $G$ is \#P-Hard, however for certain families of graphs the problem becomes tractable \cite{GJ07, Roy09, Sok05, jerrumTwodimensionalMonomerdimerSystems1987, Jaeger_Vertigan_Welsh_1990, robertsonGraphMinorsExcluding1983, robertsonGraphMinorsIII1984, robertsonGraphMinorsII1986, nobleEvaluatingTuttePolynomial1998}.

Generalizations of the Ising/Potts model to higher-order systems, i.e. hypergraphs $G=(V,E)$, where $E$ is a set of hyperedges, have also been studied in physics.
For these models, the local interaction energy terms defining the Hamiltonian can depend on more than two spins. For example, the generalization of classical $q$-state Potts model~\cite{grimmett1994potts} has spins taking values in $S = \{1, \dots, q\}$ and Hamiltonian
\[
H_{\mathrm{Potts}}(\vecs; G, \vecg) = \sum_{e \in E} g_{e}\, \delta_e(\vecs),
\]
where $\delta_e(\vecs) = 1$ if $s_i = s_j$ for all $i,j\in e$ and $0$ otherwise. The generalization of the Ising model to hypergraphs~\cite{wegnerDualityGeneralizedIsing1971, spinmod} has spins taking values in $S=\{-1,1\}$ and Hamiltonian:
\[
H_{\mathrm{Ising}}(\vecs; G, \vecg) = \sum_{e\in E} g_{e}\, \prod_{i\in e} s_i.
\]
where the product is over the vertices of the hyperedge $e$.
Models of this form have been studied in statistical physics as a \emph{maximum entropy models}~\cite{PhysRev.106.620} for modeling high-dimensional binary data. In this context, they are used to reconstruct higher-order interactions between binary variables from data observations~\cite{mastromatteo2013beyond, Gresele2017, jansma2025high}.
In the most general case, the parameters $g_e$ are different for each hyperedge $e\in E$ and the Hamiltonian also has
interaction terms based on single-vertex hyperedges (blisters), which correspond to interactions with an external field in physics and are notoriously more challenging to treat~\cite{zbMATH05956368}.

Many statistical physics models have a similar form,
where the Hamiltonian is written as a sum over local interaction energies between spin variables so it can be decomposed over a hypergraph (e.g., graphical and hypergraphical generalizations of, the $S=\{0,1\}$-version of the Ising model~\cite{jansma2025mereological}, vector Potts model~\cite{wu1982potts, deClercq2026}, Blume-Capel model~\cite{blume1966theory, capel1966possibility, waldorp2026blume}, Blume-Emery-Griffiths model~\cite{blume1971ising, armanetti2026inverse}, classical XY model~\cite{stanley1968dependence}).
These models fall under the general category of classical (as opposed to quantum) spin models.

\subsection{Overview of results}
\label{sec:overview}

We study systems of spins $s_i$ placed on the vertices of a hypergraph $G = (V,E)$, taking values in a finite \sset $S$, with configuration space $\statespace = S^{|V|}$. For all the models above, the Hamiltonian is a sum of local interaction energies $g_e\, \phi^e(\vecs)$, one per hyperedge $e$, where $\phi^e$ depends only on the spins at the vertices of $e$. The partition function then takes the general form
\[
    Z(G; \vecg) = \sum_{\vecs \in \statespace} \prod_{e \in E} \exp\!\left( g_e\, \phi^e(\vecs) \right).
\]
(We have absorbed the minus sign that conventionally appears in the Boltzmann distribution into $g_e$; this loses no generality.)

In Section~\ref{sec:mcdef} we define
a \emph{hypergraphical model} $\Mo = (G, \{\phi^e\}_{e \in E})$, by pairing a hypergraph $G$ with a set of \emph{interaction functions} $\phi^e \colon \statespace \to \mathbb{C}$, each depending only on the spins at the vertices of its hyperedge $e$.
More generally,
to describe a whole class of models uniformly we introduce \emph{interaction families} $\IFam = (S, \{\Phi^k\}_{k \in \mathbb{Z}_{\geq 0}})$, where $\Phi^k \colon S^k \to \mathbb{C}$ prescribes how any $k$ spins interact. An interaction family induces a model $\Mo(G; \IFam)$ on \emph{every} hypergraph $G$ simply by applying $\Phi^{|e|}$ to each hyperedge.

Our first application of this framework is in Section~\ref{sec:mcdef}, where we illustrate the theory on three interaction families, each generalizing a classical graph model to hypergraphs:
\begin{itemize}
    \item the \emph{Parity Ising} family $\mci$, with $S = \{-1, +1\}$ and $\Phi^k(\vecs) = \prod_{i=1}^k s_i$, generalizing the Ising model~\cite{LenzBeitragZV, isingBeitragZurTheorie1925}; its hypergraph form first appeared in lattice gauge theory~\cite{wegnerDualityGeneralizedIsing1971} and is now used to infer higher-order interactions from binary data~\cite{mastromatteo2013beyond, spinmod, clelbayes};
    \item the \emph{Delta Potts} family $\mcp$, with $S = [q]$ and $\Phi^k(\vecs) = \delta_k(\vecs)$, generalizing the Potts model~\cite{wu1982potts}; hypergraph versions have been studied in~\cite{grimmett1994potts, Berrekkal2026};
    \item the \emph{And Ising} family $\mca$, with $S = \{0,1\}$ and $\Phi^k(\vecs) = \prod_{i=1}^k s_i$, generalizing the lattice-gas form of the Ising model~\cite{lee1952statistical, Bax82}, also widely used for statistical inference in complex systems~\cite{Nguyen2017, jansma2025mereological}.
\end{itemize}
We then define isomorphisms between hypergraphical models and study the relations between these hypergraphical models. The classical $q=2$ equivalence~\eqref{eq:introequiv} between the Ising and the Potts model translates to a hypergraphical model isomorphism $\Mo(G; \mci) \cong \Mo(G; \mcpt)$ (Example~\ref{ex:graphmodequiv}). But this isomorphism is a low-order coincidence: we prove that the \emph{families} $\mci$ and $\mcpt$ are not isomorphic (Proposition~\ref{prop:potts_parity_noniso}). The And Ising family is even further from the other two, with $\Mo(G; \mca) \not\cong \Mo(G; \mci)$ even on graphs.

In the rest of the paper we restrict our focus to interaction families and hypergraphical models whose interaction functions only take two values (such as in the examples above). 
We show that such hypergraphical models are isomorphic to  hypergraphical models induced by \emph{boolean} interaction families, whose interaction functions take value in $\{0, 1\}$ (Remark~\ref{rem:binary_reduction}). We
further restrict ourselves to \emph{loopless} hypergraphs, i.e. hypergraphs in which no hyperedge contains a vertex more than once. For the three interaction families introduced above this is no loss of generality as they are \emph{loopblind} (Definition~\ref{def:loopless}): any interaction on a hyperedge with repeated vertices reduces to an interaction on an ordinary subset of the vertices, so every model is equivalent to one on a loopless hypergraph (Proposition~\ref{prop:loopless_reduction}).
\begin{itemize}
    \item \textbf{Rank function and the partition function.} Our first central result in Section~\ref{sec:delcon} is that the partition function of a boolean hypergraphical model $\Mo$ is governed by a combinatorial function we call the \emph{rank function} $r_\Mo$ on subsets of the hyperedges (Definition~\ref{def:rankfunc}), that counts the number of configurations satisfying a given set of constraints. The partition function is then given by (Proposition~\ref{prop:boolmodcountex}):
    \[
        Z(\Mo; \vecg) = q^{|V|} \sum_{A \subseteq E} q^{-r_\Mo(A)} \prod_{e \in A} v_e,
    \]
    where $q = |S|$ and $v_e = e^{g_e} - 1$. As a direct consequence, two models with the same rank function have the same partition function.

    \item \textbf{Deletion--contraction recurrence.} Our second result in Section~\ref{sec:delcon} is a deletion--contraction recurrence for boolean interaction families.
    Contracting a hyperedge $e$ amounts to fixing the value of its interaction $\phi^e(\vecs)$, which constrains the configuration space of the whole system.
    To rewrite this constrained system as a hypergraphical model of the same interaction family, we introduce the following two conditions:
    \emph{functional representability} allows us to rewrite the constrained configurations as a free subset of the spins, through a map $\sigma$, so that contraction can be realized geometrically by deleting the remaining (dependent) vertices; \emph{contraction-closedness} asks that the interactions of the resulting model again lie in the original interaction family. We obtain the following recurrence
    (Theorem~\ref{theo:contraction_closure}, Corollary~\ref{cor:partition_delcon}): if $\IFam$ is boolean, functionally representable and contraction-closed, and $\Mo = \Mo(G; \IFam)$ for a loopless hypergraph $G$, then for every hyperedge $e$, with deletion $\Mo \setminus e$ and contraction $\Mo /_{\IFam, \sigma}e$,
    \[
        Z(\Mo; \vecg) = Z(\Mo \setminus e; \vecg') + v_e\, Z(\Mo /_{\IFam,\sigma} e; \vecg'),
    \]
    where $v_e = e^{g_e} - 1$ and $\vecg'$ restricts $\vecg$ to $E \setminus \{e\}$.

    \item \textbf{Polymatroid rank function.} In Section~\ref{sec:polrank} we connect the rank functions of boolean hypergraphical models to the theory of polymatroids. In particular, we give sufficient conditions for the rank function $r_\Mo$ to be the rank function of a polymatroid. Normalization and monotonicity hold for any boolean model (Section~\ref{sec:polrank}); submodularity follows from two further conditions on the interaction family, \emph{group--coset} and \emph{global satisfiability}. Finally, integer-valuedness follows from global satisfiability and the deletion--contraction recurrence above so that we have the following result (Corollary~\ref{cor:rank_is_polymatroid}): if $\IFam$ is a loopblind boolean interaction family that is group--coset, globally satisfiable, functionally representable, and contraction-closed, then for every loopless hypergraph $G$ the rank function $r_{\Mo(G;\IFam)}$ is a polymatroid.
\end{itemize}
In Section~\ref{sec:example-interaction-families} we return to the three
families $\mci$, $\mcp$, and $\mca$ and apply our theorems to them. All
three satisfy the hypotheses of our three main results, and our framework recovers a number of results from the literature. The rank functions of the three families are three
distinct classical combinatorial structures commonly associated to hypergraphs (summarized in Table~\ref{tab:models}), giving a physical interpretation of these invariants. 

The boolean equivalent of the Parity Ising interaction family $\mcib$ yields the binary \emph{incidence matroid} over
$\mathbb{F}_2$. Its rank function is in fact a matroid, so its partition
function is an evaluation of the multivariate Tutte polynomial of that
matroid (Remark~\ref{rem:parity_tutte}). Modeling an external field by \emph{blisters} (single-vertex hyperedges) recovers, from a
matroid-theoretic viewpoint, the deletion--contraction results
of~\cite{zbMATH05956368} for the Ising model in an external magnetic
field (Section~\ref{Section:blisters}). The same matroid can be
represented by many different hypergraphs, which therefore share a
partition function. This was observed in~\cite{spinmod}, where the basis
transformations of the incidence matrix that preserve the matroid are
termed \emph{gauge transformations} and shown to leave the partition
function invariant~\cite{spinmod, deClercq2026}.

Delta Potts $\mcp$ yields the \emph{hypergraphical polymatroid},
introduced as the \emph{chromatic hypermatroid} in~\cite{Helg}; its partition functions are multivariate versions of the \emph{Poincar\'e
polynomial}~\cite{Helg} of that polymatroid (Remark~\ref{rem:delta_potts_poincare}), and its hypergraph contraction
agrees with that of the hypergraph Tutte polynomial
of~\cite{Berrekkal2026}. And Ising $\mca$ yields the \emph{boolean
polymatroid}, the \emph{covering hypermatroid} of~\cite{Helg}; its partition functions are again multivariate versions of the
Poincar\'e polynomial of that polymatroid
(Remark~\ref{rem:and_ising_poincare}). We follow~\cite{Vertigan_Whittle_1993}
in calling these polymatroids the hypergraphical and boolean polymatroids. Evaluating
the Poincar\'e polynomial at $v_e = -1$ gives known counting identities:
the number of weak $q$-colorings for $\mcp$ and the number of transversal
sets for $\mca$, matching~\cite{Helg}.

When restricted to graphs, both $\mci$ and $\mcp$ reduce to the classical
graphical matroid, and as they are not isomorphic as interaction families their partition functions give \emph{two genuinely
distinct} hypergraph generalizations of the multivariate Tutte
polynomial.

The paper is organized as follows. Section~\ref{sec:mcdef} sets up hypergraphical models, interaction families, and the boolean and loopblind subclasses, and treats the three running examples. Section~\ref{sec:delcon} develops the rank function and the deletion--contraction recurrence. Section~\ref{sec:polrank} gives the conditions under which the rank function is a polymatroid. Section~\ref{sec:example-interaction-families} applies the theory to the three families and draws out the connections to the literature summarized above. Section~\ref{sec:discussion} concludes with a discussion and further directions of investigation.

\section{Hypergraphical models and interaction families}
\label{sec:mcdef}
In this section we set up the definitions for our theory of interaction families. We start by defining hypergraphical models, which provide a way to associate a family of Hamiltonians to a given hypergraph. We then introduce interaction families which induce hypergraphical models for any given hypergraph. Finally we introduce boolean interaction families for which we shall establish a `Tutte-polynomial-like' theory in Section \ref{sec:delcon} and Section \ref{sec:polrank}.

\subsection{Hypergraph conventions}
We start by fixing our hypergraph notation.
\begin{definition}  \label{simplehypergraph}
A \emph{simple hypergraph}   $H$ consists of a set $V$ whose elements are the \emph{vertices} of the hypergraph, together with a set of \emph{hyperedges} $E$, where each edge is a subset of $V$.
\end{definition} 

\begin{definition} \label{hypergraph}
 A \emph{hypergraph}   $H$ consists of a set $V$ whose elements are called the \emph{vertices} of the hypergraph, together with a multiset of \emph{hyperedges} $E$ that are  multisets of the elements of $V$.  
\end{definition}

\begin{definition}
    A \emph{loop} of a hypergraph $H = (V,E)$ is an edge $e\in E$ that is a multiset but not a subset of the elements of $V$, i.e. at least one element of $V$ appears more than once in $e$.
\end{definition}

\begin{definition}
    A \emph{blister} of a hypergraph $H = (V,E)$ is an edge $e$ consisting of precisely one vertex.
\end{definition}

\begin{definition}
    A \textit{loopless hypergraph} is a hypergraph that contains no loops.
\end{definition}

Note that a simple hypergraph is then just a hypergraph where each edge is a set and all the edges together form a set. If it is important to specify the vertices and edges of a particular hypergraph, we may write $H(V, E)$ or denote them by $V(H)$ and $E(H)$. Thus, except for blisters, our terminology reduces to standard conventions when the hypergraphs are graphs.

A hypergraph $H$ is a \emph{subhypergraph} of $G$ if $V(H) \subseteq V(G)$ and $E(H) \subseteq E(G)$ (as multisets). We denote the set of vertices incident to some edge $e \in E(G)$ by $\overline{e}$ and the set of vertices incident to some subset of edges $A \subset E(G)$ by $\overline{A}$. The subhypergraph \emph{induced} by $A \subseteq E(G)$ has edge set $E(H) = A$ and vertex set $V(H) = \overline{A}$ consisting of exactly the vertices in $G$ that are contained in some edge of $E(H)$. It is denoted by $G[A]$. The \emph{spanning subhypergraph} induced by $A \subseteq E(G)$ has $E(H) = A$ and $V(H) = V(G)$. We denote it by $G_A$. This distinction between the subgraph induced by $A$ and the spanning subgraph induced by $A$  is important.

A \emph{connected component} of a hypergraph $G$ is a minimal non-empty subset $V' \subseteq V(G)$ such that for every $e \in E(G)$, either $\overline{e} \cap V' = \emptyset$ or $\overline{e} \subseteq V'$.
Equivalently, two vertices $v_1, v_2 \in V$ are in the same connected component of $G$ if and only if $v_1 = v_2$ or there exists a sequence of hyperedges $e_1, \dots, e_k$ in $E(G)$ such that $v_1 \in e_1$, $v_2 \in e_k$, and $\overline{e_{i}} \cap \overline{e_{i+1}} \neq \emptyset$ for all $1 \leq i < k$.
We denote the number of connected components of $G$ by $\kappa(G)$. 

Definition \ref{hypergraph} allows for hypergraph analogs of loops (hyperedges that can contain the same vertex multiple times) and duplicate edges (more than one hyperedge comprising the same set of vertices), and also permits empty hyperedges.  In Proposition \ref{prop:edge_redundancy} we will see that duplicate and empty hyperedges may be absorbed into the interactions functions given the locality conditions of Definition \ref{def:graphmod}.  Formulas and results depending on the number of hyperedges may need to account for the possibility of empty hyperedges; to avoid this technicality, it may sometimes be more convenient simply to require that all hyperedges be non-empty. 

Since graphs are a subclass of hypergraphs, we generally will not use distinguishing letters for the classes, so that $H$ may be a graph, while $G$ may be hypergraph.

\subsection{Hypergraphical models}
Here we define hypergraphical models, which give us a formal way to associate a parameterized family of Hamiltonians to a given hypergraph. We consider systems of random variables~$s_i$, called spins, placed on the vertices $i$ of a hypergraph $G = (V,E)$. The spins take value in the \sset $S$. We let $\statespace = S^{|V|}$ denote the state space of the system. The key idea is that the hyperedges encode which groups of spins can interact: each hyperedge carries a local interaction function that depends only on the spins of its incident vertices.

\begin{definition} \label{def:graphmod}
A \emph{hypergraphical model} $\Mo$ is a pair $(G, \{\phi^e\}_{e \in E})$ where $G = (V, E)$ is a hypergraph whose vertices index the spins of a system with states $\vecs \in \statespace$, and $\{\phi^e\}_{e \in E}$ is a family of \emph{interaction functions} where each $\phi^e \colon \statespace \to \mathbb{C}$ satisfies the following locality conditions:
\begin{enumerate}
    \item\label{cond:graphmod:locality} $\phi^e(\vecs) = \phi^e(\vecs')$ if $\{s_i \}_{i \in e} = \{s'_i \}_{i \in e}$ as multisets,
    
    \item\label{cond:graphmod:duplicates} $\phi^e(\vecs) = \phi^{e'}(\vecs)$ for all $\vecs$ if $e = e'$, i.e., if $e$ and $e'$ are duplicate hyperedges.
\end{enumerate}
\end{definition}
\begin{remark}
    \label{rem:symmetry}
    Note that a function $f: S^k \to \mathbb{C}$ depends only on the multiset of values $\{s_1, \dots, s_k\}$ if and only if $f$ is symmetric under any permutation $\sigma \in S_k$ of its arguments.
\end{remark}
Condition~\ref{cond:graphmod:locality} ensures that the interaction functions are local, they only depend on the vertices in their hyperedge, and symmetric under permutation of the vertices in their hyperedge. Condition~\ref{cond:graphmod:duplicates} is a technical condition that ensures parallel hyperedges have the same interaction functions.

The interaction functions of a graphical model $\Mo$ define a parameterized family of Hamiltonians, which in turn define a family of maximum entropy distributions as follows.  

\begin{definition}
\label{def:maxent}
Let $\Mo = (G, \{\phi^e\}_{e \in E})$ be a hypergraphical model. For a parameter vector $\vecg = (g_e)_{e \in E} \in \mathbb{C}^{|E|}$ such that $g_e \phi^e(\vecs) \in \mathbb{R}$ for all $\vecs \in \statespace$ the \emph{maximum-entropy distribution} of $\Mo$ is:
\[
    p(\vecs \mid \Mo, \vecg) = \frac{1}{Z(\Mo; \vecg)} \prod_{e \in E} \exp\left( g_e \phi^e(\vecs) \right),
\]
where the \emph{model partition function} $Z(\Mo ; \vecg)$ is:
\[
    Z(\Mo; \vecg) = \sum_{\vecs \in \Omega} \prod_{e \in E} \exp\left( g_e \phi^e(\vecs) \right).
\]
\end{definition}

The parameters $g_e$ and function $\phi^e(\vecs)$ usually take real values, as $p(\vecs\,|\,\Mo, \vecg)$ is a probability distribution.
However, it is also common to study the partition function as a complex-valued function to study phase transitions in classical many-body physics~\cite{yang1952statistical} and complex systems~\cite{krasnytska2015violation, krasnytska2016partition}, and recent classical spin models were also introduced with complex-valued parameters and interaction functions~\cite{deClercq2026}.
In the following, we relax the constraints that $Z$ must be real. 

\begin{remark}[Relation to Statistical Inference]
    In the context of the statistical modeling of complex systems, the probability distributions in Definition~\ref{def:maxent} are often used to infer the parameters $\vecg$ from a finite dataset of empirical observations, see e.g. \cite{Nguyen2017, spinmod, clelbayes}. In this setting, there is a dataset of observations of the state space $\Omega$ and the goal is to recover the underlying probability distribution. One approach is to tune the parameters $\vecg$ so that the expectation values of the operators $\phi^e(\vecs)$ equal the empirical averages measured from the dataset \cite{PhysRev.106.620}. The partition function $Z(\Mo; \vecg)$ plays a central role here, not just as a normalization constant, but because its derivatives yield the statistical moments of the distribution. For instance, the Hessian of $\log Z(\Mo; \vecg)$ gives the Fisher information matrix, which characterizes the information-theoretic complexity of the model in parameter space.
\end{remark}

We now consider the conditions under which two hypergraphical models, whose variables can take values in different spaces $S$ and $S'$, are isomorphic, essentially when the underlying hypergraphs are isomorphic, there is a compatible bijection between the spins, and the interaction functions are linearly related.

\begin{definition}[Hypergraphical Model Isomorphism]
Let $\Mo = (G = (V,E), \{\phi^e\}_{e \in E})$ and $\Mo' = (G' = (V',E'), \{\psi^{e'}\}_{e' \in E'})$ be two hypergraphical models with state spaces $\statespace = S^{|V|}$ and $\statespace' = S'^{|V'|}$ respectively. We say that $\Mo$ and $\Mo'$ are \emph{isomorphic}, denoted $\Mo \cong \Mo'$, if there exist:
\begin{enumerate}
    \item A hypergraph isomorphism, consisting of a permutation of the vertices $\pi: V \to V'$ inducing a bijection $\hat{\pi}: E \to E' \colon e \mapsto \{ \pi (i) \colon i \in e \}$,
    \item A bijection $\tau: S \to S'$, which induces a bijection $\hat{\tau} \colon \statespace \to \statespace' \colon (s_i)_{i\in V} \mapsto (\tau(s_{\pi^{-1}(i)}))_{i \in V'}$, 
    \item Constants $\alpha_e, \beta_e \in \mathbb{R}$ for all $e \in E$ with $\alpha_e \neq 0$,
\end{enumerate}
such that for all $e \in E$ and all $\vecs \in \statespace$, the interaction functions satisfy the linear relation:
\begin{equation}
    \phi^e(\vecs) = \alpha_e \cdot \psi^{\hat{\pi}(e)}(\hat{\tau}(\vecs)) + \beta_e.
\end{equation}
\end{definition}

If two models are isomorphic, their partition functions are related by a simple scaling of parameters and a multiplicative prefactor as follows.

\begin{proposition}
\label{prop:equivmodpart}
Let $\Mo$ and $\Mo'$ be isomorphic hypergraphical models as defined above, with parameter vectors $\vecg$ and $\vecg'$. If the parameters are related by $g'_{\hat{\pi}(e)} = \alpha_e\, g_e$ for all $e \in E$, then the partition functions satisfy:
\[
    Z(\Mo; \vecg) = \exp\!\left( \sum_{e \in E} \beta_e\, g_e \right) Z(\Mo'; \vecg').
\]
\end{proposition}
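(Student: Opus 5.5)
The plan is a direct computation: substitute the linear relation into each Boltzmann factor, then reindex the sum over configurations using the bijection $\hat{\tau}$.

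\emph{Rewriting each factor.} Fix $\vecs \in \statespace$ and a hyperedge $e \in E$. The isomorphism gives $\phi^e(\vecs) = \alpha_e \psi^{\hat{\pi}(e)}(\hat{\tau}(\vecs)) + \beta_e$. Multiplying by $g_e$ and using the assumed relation $g'_{\hat{\pi}(e)} = \alpha_e g_e$ yields
\[
g_e \phi^e(\vecs) = g'_{\hat{\pi}(e)}\, \psi^{\hat{\pi}(e)}(\hat{\tau}(\vecs)) + \beta_e g_e .
\]
Exponentiating and taking the product over $e \in E$, the constant terms factor out:
\[
\prod_{e \in E} \exp\bigl(g_e\phi^e(\vecs)\bigr) = \exp\Bigl(\sum_{e\in E}\beta_e g_e\Bigr)\prod_{e\in E}\exp\bigl(g'_{\hat{\pi}(e)}\psi^{\hat{\pi}(e)}(\hat{\tau}(\vecs))\bigr).
\]

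\emph{Reindexing the edge product.} Since $\hat{\pi}\colon E \to E'$ is a bijection of edge multisets, we rewrite the remaining product as a product over $e' \in E'$ of $\exp\bigl(g'_{e'}\psi^{e'}(\hat{\tau}(\vecs))\bigr)$.

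\emph{Reindexing the configuration sum.} Summing over $\vecs \in \statespace$, we use that $\hat{\tau}\colon \statespace \to \statespace'$ is a bijection. It is the composite of the vertex relabelling induced by $\pi$ with the coordinatewise bijection $\tau$. Setting $\vecs' = \hat{\tau}(\vecs)$, the sum becomes $\sum_{\vecs' \in \statespace'}\prod_{e'\in E'}\exp\bigl(g'_{e'}\psi^{e'}(\vecs')\bigr) = Z(\Mo';\vecg')$. This gives the claimed identity.

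\emph{Main point of care.} The only step needing attention is the edge reindexing when $E$ contains duplicate hyperedges. There $\hat{\pi}$ must be read as a bijection of multisets, matching copies of an edge one-to-one. The relation $g'_{\hat{\pi}(e)} = \alpha_e g_e$ must then be read per copy, so that each factor is matched with exactly one factor of the primed product. Condition (\ref{cond:graphmod:duplicates}) of Definition \ref{def:graphmod} ensures that duplicate edges carry the same interaction function, so this matching is consistent.
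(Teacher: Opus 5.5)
Your proposal is correct and follows essentially the same route as the paper: substitute the linear relation, factor out $\exp\bigl(\sum_{e\in E}\beta_e g_e\bigr)$, and reindex the configuration sum via the bijection $\hat{\tau}$ and the edge product via $\hat{\pi}$. Your extra remark about reading $\hat{\pi}$ as a copy-by-copy bijection when $E$ has duplicate hyperedges is a harmless refinement that the paper leaves implicit.
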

\begin{proof}
By definition, $Z(\Mo; \vecg) = \sum_{\vecs \in \statespace} \prod_{e \in E} \exp(g_e \phi^e(\vecs))$. Substituting the isomorphism relation $\phi^e(\vecs) = \alpha_e \psi^{\hat{\pi}(e)}(\hat{\tau}(\vecs)) + \beta_e$:
\begin{align*}
    Z(\Mo; \vecg) &= \sum_{\vecs \in \statespace} \prod_{e \in E} \exp\!\left( g_e \left[ \alpha_e \psi^{\hat{\pi}(e)}(\hat{\tau}(\vecs)) + \beta_e \right] \right) \\
    &= \sum_{\vecs \in \statespace} \prod_{e \in E} \exp\!\left( \alpha_e g_e \, \psi^{\hat{\pi}(e)}(\hat{\tau}(\vecs)) \right) \cdot \exp(\beta_e g_e) \\
    &= \exp\!\left( \sum_{e \in E} \beta_e g_e \right) \sum_{\vecs \in \statespace} \prod_{e \in E} \exp\!\left( \alpha_e g_e \, \psi^{\hat{\pi}(e)}(\hat{\tau}(\vecs)) \right).
\end{align*}
Since $\hat{\tau} \colon \statespace \to \statespace'$ is a bijection, the change of variables $\vecs' = \hat{\tau}(\vecs)$ rewrites the sum as one over $\vecs' \in \statespace'$. Likewise, since $\hat{\pi} \colon E \to E'$ is a bijection, the product over $e \in E$ becomes a product over $e' = \hat{\pi}(e) \in E'$. Substituting $g'_{e'} = \alpha_e g_e$ yields:
\[
    Z(\Mo; \vecg) = \exp\!\left( \sum_{e \in E} \beta_e g_e \right) \sum_{\vecs' \in \statespace'} \prod_{e' \in E'} \exp\!\left( g'_{e'} \, \psi^{e'}(\vecs') \right) = \exp\!\left( \sum_{e \in E} \beta_e g_e \right) Z(\Mo'; \vecg'). \qedhere
\]
\end{proof}

\begin{example}[Parity Ising, q-state Delta Potts, and And Ising Models on Graphs]
\label{ex:graphmodequiv}
To illustrate the usefulness of hypergraphical models, we show how three commonly used models in mathematics and physics fit in this framework. Let $G = (V,E)$ be a simple graph (a simple hypergraph where all edges have size 2). For each of the following choices of spin value set and interaction function (where $e = \{i,j\} \in E$), we obtain a hypergraphical model $\Mo = (G, \{\phi^e\}_{e \in E})$:
\begin{enumerate}
    \item \textbf{Parity Ising:} spins value set $\{-1, +1\}$, \quad $\phi^e_{\mathrm{Par-Is}}(\vecs) = s_i s_j$.
    \item \textbf{q-state Delta Potts:} spins value set $\{0, 1, \dots, q-1\}$, \quad $\phi^e_{\mathrm{q-Potts}}(\vecs) = \delta(s_i, s_j)$.
    \item \textbf{And Ising:} spins value set $\{0, 1\}$, \quad $\phi^e_{\mathrm{And-Is}}(\vecs) = s_i s_j$.
\end{enumerate}
We denote the corresponding models by $\Mo_{\mathrm{Par-Is}}$, $\Mo_{\mathrm{q-Potts}}$, and $\Mo_{\mathrm{And-Is}}$, respectively.

It holds that $\Mo_{\mathrm{Par-Is}} \cong \Mo_{\mathrm{2-Potts}}$. The state set bijection $\tau \colon \{0,1\} \to \{-1,+1\}$ given by $\tau(s) = 2s - 1$ transforms the interactions as follows:
\[ \tau(s_i)\tau(s_j) = (2s_i - 1)(2s_j - 1) = 2\delta(s_i, s_j) - 1. \]
This gives the required linear relation $\phi^e_{\mathrm{2-Potts}}(\vecs) = \frac{1}{2} \phi^e_{\mathrm{Par-Is}}(\hat{\tau}(\vecs)) + \frac{1}{2}$, showing $\Mo_{\mathrm{2-Potts}} \cong \Mo_{\mathrm{Par-Is}}$. This recovers the well-known relation between the partition functions of the Ising and Potts model given in equation \eqref{eq:introequiv}.

However, $\Mo_{\mathrm{And-Is}} \not \cong \Mo_{\mathrm{Par-Is}}$ (and thus $\Mo_{\mathrm{And-Is}} \not \cong \Mo_{\mathrm{2-Potts}}$ as well). To prove this, we analyze the level sets of the interaction functions on a single edge $e=\{i,j\}$. The Parity Ising interaction $\phi^e_{\mathrm{Par-Is}}$ takes the value $+1$ for two configurations ($s_i=s_j$) and $-1$ for two configurations ($s_i \neq s_j$). In contrast, the And interaction $\phi^e_{\mathrm{And-Is}}$ takes the value $1$ only for a single configuration $(1,1)$, and the value $0$ for the remaining three configurations. For the models to be isomorphic, there must exist an invertible linear transformation relating the interaction functions, which would necessarily preserve the cardinalities of these level sets. The discrepancy in cardinalities ($(2,2)$ versus $(1,3)$) proves that no such isomorphism exists.
\end{example}

The following proposition shows that duplicate and empty hyperedges are redundant for the maximum-entropy distribution defined by the model: their effect can always be absorbed into the parameters of a reduced model.
\begin{proposition}\label{prop:edge_redundancy}
Let $G = (V,E)$ be a hypergraph with $E = \{e_1, e_2, \dots, e_m\}$. Let $\Mo = (G, \{\phi^e \}_{e \in E})$ be a hypergraphical model associated with $G$, and let $p(\vecs \mid \Mo, \vecg)$ be the corresponding maximum-entropy distribution for parameter vector $\vecg$.

\begin{enumerate}[(i)]
    \item \textbf{(Duplicate Edges)} Suppose $e_i = e_j$ for some $i \neq j$. Without loss of generality, let $e_1 = e_2$. Consider the hypergraph $G' = (V, E')$ where $E' = \{e_2, e_3, \dots, e_m\}$, and let $\Mo' = (G', \{ \phi^e \}_{e \in E'})$. Then we have:
    \begin{equation}\label{eq:prop:double-edges:1}
        p\left(\vecs \mid \Mo, (g_{e_1}, g_{e_2}, g_{e_3} \dots, g_{e_m})\right) = p\left(\vecs \mid \Mo', (g_{e_1} + g_{e_2}, g_{e_3}, \dots, g_{e_m}) \right).
    \end{equation}

    \item \textbf{(Empty Edges)} Suppose $e_i = \emptyset$ for some $i$, i.e. an empty edge. Without loss of generality, let $e_1 = \emptyset$. Consider the hypergraph $G' = (V, E')$ where $E' = \{e_2, e_3, \dots, e_m\}$. Let $\Mo' = (G', \{ \phi^e\}_{e \in E'})$. Then we have:
    \begin{equation}\label{eq:prop:double-edges:2}
        p(\vecs \mid \Mo, (g_{e_1}, g_{e_2}, \dots, g_{e_m})) = p(\vecs \mid \Mo', (g_{e_2}, \dots, g_{e_m})).
    \end{equation}
\end{enumerate}
\end{proposition}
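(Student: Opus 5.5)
Both parts follow by comparing the unnormalised Boltzmann weights $w(\vecs)=\prod_{e\in E}\exp(g_e\phi^e(\vecs))$ of the two models. Since $p(\vecs\mid\Mo,\vecg)=w(\vecs)/\sum_{\vecs'}w(\vecs')$, it suffices to show that the weight of $\Mo$ equals the weight of $\Mo'$ times a factor $c$ that does not depend on $\vecs$. The factor $c$ then cancels between numerator and partition function, which gives the claimed equality of distributions. I will also note that the partition functions are related by $Z(\Mo;\vecg)=c\,Z(\Mo';\vecg')$.

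For (i), I would use the duplicate-edge condition~\ref{cond:graphmod:duplicates} of Definition~\ref{def:graphmod}. Because $e_1=e_2$, we have $\phi^{e_1}=\phi^{e_2}$ as functions on $\statespace$. Hence
\[
\exp(g_{e_1}\phi^{e_1}(\vecs))\exp(g_{e_2}\phi^{e_2}(\vecs))=\exp\bigl((g_{e_1}+g_{e_2})\phi^{e_2}(\vecs)\bigr).
\]
The remaining factors for $e_3,\dots,e_m$ are identical in the two models. So the weight of $\Mo$ at $\vecg$ equals the weight of $\Mo'$ at $(g_{e_1}+g_{e_2},g_{e_3},\dots,g_{e_m})$ exactly, with $c=1$. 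The two partition functions are therefore equal, and so are the normalised distributions.

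For (ii), I would use the locality condition~\ref{cond:graphmod:locality} to show that $\phi^{e_1}$ is constant when $e_1=\emptyset$. For any $\vecs,\vecs'\in\statespace$, the multisets $\{s_i\}_{i\in\emptyset}$ and $\{s'_i\}_{i\in\emptyset}$ are both empty and hence equal, so $\phi^{e_1}(\vecs)=\phi^{e_1}(\vecs')$. Write $\phi^{e_1}\equiv c_0$. Then the weight of $\Mo$ is $\exp(g_{e_1}c_0)$ times the weight of $\Mo'$. This constant factor cancels in the normalisation, and $Z(\Mo;\vecg)=\exp(g_{e_1}c_0)\,Z(\Mo';\vecg')$.

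There is no real obstacle here. The only subtle point is recognising that locality forces an interaction function on the empty hyperedge to be constant. One should also remark that the hypothesis $\sum w\neq 0$, which Definition~\ref{def:maxent} needs in order to define $p$, holds for $\Mo$ exactly when it holds for $\Mo'$, because the weights differ only by a nonzero factor.
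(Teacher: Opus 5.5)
Your proof is correct and takes the same route as the paper, whose proof simply says both identities follow directly from Definition~\ref{def:maxent} and the locality conditions of Definition~\ref{def:graphmod}. You have written out the details: condition~\ref{cond:graphmod:duplicates} lets the two duplicate factors merge into one with parameter $g_{e_1}+g_{e_2}$, and condition~\ref{cond:graphmod:locality} forces $\phi^{e_1}$ to be constant when $e_1=\emptyset$, so its nonzero factor cancels in the normalisation.
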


\begin{proof}
The equivalences in \eqref{eq:prop:double-edges:1} and \eqref{eq:prop:double-edges:2} 
follow directly from Definition \ref{def:maxent} and the locality conditions of \ref{def:graphmod}.
\end{proof}

This phenomenon of duplicate hyperedges being absorbed into the parameters is an analog of the same phenomenon for duplicate edges in graphs and the multivariable Tutte polynomial. See \cite{TRALDI2000} and the references therein.

\subsection{Interaction families}
We now introduce interaction families, which offer us a systematic way to induce hypergraphical models on hypergraphs.

\begin{definition}[Interaction Family]
\label{def:family}
An \emph{interaction family} $\IFam = (S, \{\Phi^k\}_{k \in \mathbb{Z}_{\geq 0}})$ is a \sset $S$ and a family of functions $\{\Phi^k\}_{k \in \mathbb{Z}_{\geq 0}}$, where for each $k$, the function $\Phi^k \colon S^k \to \mathbb{C}$ satisfies the following condition
\begin{enumerate}[(1)]
    \item $\Phi^k(\vecs) = \Phi^k(\vecs') \quad \text{ if } \{s_i \}_{i=1}^k = \{s'_i \}_{i=1}^k$ as multisets, i.e. $\Phi^k(\vecs)$ is invariant under permutation of the vertices (see remark \ref{rem:symmetry}).
\end{enumerate}
\end{definition}

\begin{definition}[Induced Models]
\label{def:modinst}
Given an interaction family $\IFam = (S, \{\Phi^k\}_{k \in \mathbb{Z}_{\geq 0}})$ and a hypergraph $G=(V,E)$, the \emph{hypergraphical model induced by $\IFam$ on $G$}, denoted $\Mo(G; \IFam)$, is the model $(G, \{\phi^e\}_{e \in E})$ with state space $S^{|V|}$. For every edge $e \in E$, the interaction function is defined as:
\[
    \phi^e(\vecs) = \Phi^{|e|}(\vecs|_e).
\]
Here, $\vecs|_e$ denotes the restriction of the state vector to the edge $e$. If $e = \{v_1, v_2, \dots, v_k\}$ is a multiset of vertices (where vertices may be repeated), then $\vecs|_e$ is the multiset of values $(s_{v_1}, s_{v_2}, \dots, s_{v_k})$.

For any class of hypergraphs $\mathcal{G}$, $\IFam$ induces a class of hypergraphical models $\Mo(\mathcal{G}; \IFam)$.
\end{definition}
An important class of hypergraphs is the class of loopless hypergraphs, denoted by $\overline{\mathcal{G}}$.

Finally, we define when two interaction families are isomorphic.

\begin{definition}[Isomorphism of Interaction Families]
\label{def:modclassiso}
Two interaction families $\IFam = (S, \{\Phi^k\}_{k \in \mathbb{Z}_{\geq 0}})$ and $\IFam' = (S', \{\Psi^k\}_{k \in \mathbb{Z}_{\geq 0}})$ are \emph{isomorphic} if there is a bijection $f \colon S \to S'$ such that, for every $k \in \mathbb{Z}_{\geq 0}$, the interaction functions $\Phi^k$ and $\Psi^k$ are linearly related; that is, there exist constants $\alpha_k, \beta_k$ such that for $\vecs = (s_i)_{i=1}^k \in S^k$ it holds that
\[\Phi^k\Big((s_i)_{i=1}^k\Big) = \alpha_k \Psi^k\Big(\big(f(s_i)\big)_{i=1}^k\Big) + \beta_k.\]
\end{definition}

\begin{proposition}
    \label{prop:isoinstant}
    Let $\IFam$ and $\IFam'$ be isomorphic interaction families, then, for any hypergraph $G = (V,E)$ it holds that $\Mo(G; \IFam) \cong \Mo(G; \IFam')$ with linear parameters $\alpha_e = \alpha_{|e|}$ and $\beta_e = \beta_{|e|}$.
\end{proposition}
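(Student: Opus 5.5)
The plan is to exhibit the three pieces of data required by the definition of hypergraphical model isomorphism and then check the linear relation edge by edge. For the hypergraph isomorphism we take the identity $\pi = \mathrm{id}_V \colon V \to V$. The induced map $\hat{\pi}$ is then the identity on the multiset $E$, and it is trivially a bijection that respects multiplicities and the multiset structure of each hyperedge, loops included. For the spin bijection we take $\tau = f \colon S \to S'$, where $f$ is the bijection supplied by Definition~\ref{def:modclassiso}. Then $\hat{\tau}\colon \statespace = S^{|V|} \to \statespace' = S'^{|V|}$ acts componentwise, $(s_i)_{i\in V} \mapsto (f(s_i))_{i \in V}$, and is a bijection. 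For the constants we set $\alpha_e = \alpha_{|e|}$ and $\beta_e = \beta_{|e|}$, where $|e|$ counts multiplicity, as in Definition~\ref{def:modinst}.

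Next we verify the linear relation. Fix $e = \{v_1,\dots,v_k\}$, a multiset with $k=|e|$, and fix $\vecs \in \statespace$. By Definition~\ref{def:modinst},
\[
\phi^e(\vecs) = \Phi^{k}\big((s_{v_1},\dots,s_{v_k})\big).
\]
Applying the family isomorphism gives
\[
\phi^e(\vecs) = \alpha_k\, \Psi^{k}\big((f(s_{v_1}),\dots,f(s_{v_k}))\big) + \beta_k .
\]
The tuple $(f(s_{v_1}),\dots,f(s_{v_k}))$ is exactly $\hat{\tau}(\vecs)|_{\hat\pi(e)}$. Hence the right-hand side equals $\alpha_e\,\psi^{\hat{\pi}(e)}(\hat{\tau}(\vecs)) + \beta_e$, which is the required relation. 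The ordering chosen for the multiset restriction $\vecs|_e$ does not matter, because each $\Phi^k$ and $\Psi^k$ is symmetric (condition (1) of Definition~\ref{def:family} and Remark~\ref{rem:symmetry}).

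The only genuine obstacle is a side condition in the model-isomorphism definition: it requires $\alpha_e \neq 0$ and $\alpha_e, \beta_e \in \mathbb{R}$, whereas Definition~\ref{def:modclassiso} only posits constants $\alpha_k, \beta_k$. If $\alpha_k = 0$ for some $k$, then $\Phi^k$ would be constant, and a hyperedge of size $k$ could not satisfy the model condition with the proposed constants. I would handle this by reading Definition~\ref{def:modclassiso} as implicitly requiring $\alpha_k \neq 0$ and real constants, mirroring the model-level definition, and I would state this convention explicitly in the proof. If one wanted to avoid the convention, the degenerate case with $\Phi^k$ constant would have to be treated separately, since then $\Psi^k$ need not be constant and no choice of $\alpha_e \neq 0$ works in general. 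I therefore expect the intended statement to assume nondegenerate constants.

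Finally, I would check that both induced models satisfy the locality conditions of Definition~\ref{def:graphmod}. This already follows from Definition~\ref{def:modinst}, since duplicate edges receive identical functions $\Phi^{|e|}(\vecs|_e)$. This completes the argument.
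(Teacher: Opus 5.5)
Your proof is correct and follows the paper's argument: take the identity hypergraph isomorphism and $\tau = f$, then read the linear relation off Definitions~\ref{def:modclassiso} and~\ref{def:modinst}. Your caveat is also valid: Definition~\ref{def:modclassiso} as written does not require $\alpha_k \neq 0$ or real constants, so that convention must be read into it, and the paper assumes it implicitly (for instance in the proof of Proposition~\ref{prop:potts_parity_noniso}).
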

\begin{proof}
    The hypergraph isomorphism is trivial in this case and the linear relation between the interaction functions follows directly from Definition \ref{def:modclassiso} and Definition \ref{def:modinst}.
\end{proof}

\begin{example}[Higher-Order Vector Potts Interaction Family]
    \label{ex:vectorpotts}
    The higher-order vector Potts model studied in~\cite{deClercq2026}
    fits into our framework as the interaction family defined as
    follows. For a given $q \in \mathbb{Z}_{>0}$, take \sset $S = \{
    \omega^0, \dots, \omega^{q-1} \} \subset \mathbb{C}$, the set of
    $q$-th roots of unity, where $\omega = e^{2\pi i / q}$, and
    interaction functions
    \[
        \Phi^k(s_1, \dots, s_k) = \prod_{i=1}^k s_i.
    \]
    These are symmetric under permutation of their arguments
    (Remark~\ref{rem:symmetry}), so $(S, \{\Phi^k\}_{k \in
    \mathbb{Z}_{\geq 0}})$ is an interaction family in the sense of
    Definition~\ref{def:family}. The induced hypergraphical models
    $\Mo(G; \IFam)$ and their maximum-entropy distributions
    coincide with the family of distributions studied
    in~\cite{deClercq2026}.
\end{example}

\subsection{Boolean interaction families and examples}
\label{sec:boolean_and_examples}

In this section, we focus on a subclass of models where the interaction values are binary. We define boolean hypergraphical models and interaction families, prove a reduction theorem showing that many models are isomorphic to boolean ones, and finally introduce three key examples of interaction families.

\begin{definition}[Boolean Hypergraphical Models and Interaction Families]
    A hypergraphical model $\Mo = (G, \{\phi^e\}_{e \in E} )$ is called \emph{boolean} if, for every $e \in E$, the interaction function $\phi^e$ maps $\statespace \to \{0,1\}$.

    A \emph{boolean interaction family} is an interaction family $\IFam = (S, \{\Phi^k\}_{k \in \mathbb{Z}_{\geq 0}})$ where the family of interaction functions maps to binary values:
    \[ \Phi^k \colon S^k \to \{0,1\} \quad \text{for all } k \geq 0. \]
\end{definition}
\begin{remark}[Binary Reduction]
\label{rem:binary_reduction}
    We note that any hypergraphical model (or interaction family) where the interaction functions take values in spin value sets with two different values is isomorphic to a boolean one: let $\phi$ take value in $\{u,v\}$ with $u \neq v$, then we have the linear transformation $\phi(\vecs) \mapsto \frac{\phi(\vecs) - v}{u - v}$, which maps $v \to 0$ and $u \to 1$. Consequently, the theory we develop for boolean models applies generally to any model with binary-valued interactions.
\end{remark}

We now introduce three different interaction families that induce the models on graphs discussed in Example~\ref{ex:graphmodequiv}. The first interaction family we define is the Parity Ising interaction family.

\begin{definition}[Parity Ising Interaction Family]
    \label{def:parisclass}
    The Parity Ising interaction family $\mci = (S, \{\PhiIs^k\}_{k \in \mathbb{Z}_{\geq 0}})$ is defined by the \sset $S = \{-1, 1\}$ and interaction functions
    \[
    \PhiIs^k \colon S^k \to \{-1,1\} \colon (s_i)_{i=1}^k \mapsto \prod_{i=1}^k s_i.
    \]
\end{definition}

The Parity Ising interaction family is not boolean, but we can use the insight from Remark~\ref{rem:binary_reduction} to find an isomorphic boolean interaction family.

\begin{definition}
    The boolean Parity Ising interaction family $\mcib = (S, \{\PhibIs^k \}_{k \in \mathbb{Z}_{\geq 0}})$ is defined with \sset $S = \mathbb{Z}/2\mathbb{Z}$ with interaction function
    \[ \PhibIs^k \colon S^k \to \{0,1\} \colon (s_i)_{i=1}^k \mapsto \begin{cases}
        0 & \sum_{i=1}^k s_i \equiv 1 \mod 2 \\
        1 & \text{else}
    \end{cases}.\]
\end{definition}
\begin{proposition}
\label{prop:parisboiso}
    The Parity Ising interaction family $\mci = (S, \{\PhiIs^k\}_{k \in \mathbb{Z}_{\geq 0}})$ is isomorphic to $\mcib = (S', \{\PhibIs^k\}_{k \in \mathbb{Z}_{\geq 0}})$.
\end{proposition}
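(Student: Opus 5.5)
The plan is to exhibit the isomorphism of Definition~\ref{def:modclassiso} explicitly: choose a bijection $f\colon \{-1,1\}\to \mathbb{Z}/2\mathbb{Z}$ and, for each $k$, constants $\alpha_k,\beta_k$ relating $\PhiIs^k$ to $\PhibIs^k\circ f$. The natural bijection is the one that turns multiplication in $\{-1,1\}$ into addition mod $2$. So I take $f(1)=0$ and $f(-1)=1$, which is the group isomorphism $\{\pm1\}\cong \mathbb{Z}/2\mathbb{Z}$, i.e.\ $s=(-1)^{f(s)}$.

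The key identity I will establish first is
\[
\prod_{i=1}^k s_i=(-1)^{\sum_{i=1}^k f(s_i)}.
\]
Since $f$ is a homomorphism, this follows by a trivial induction on $k$, and for $k=0$ both sides equal $1$ by the empty product and empty sum conventions. Hence $\PhiIs^k(\vecs)=1$ exactly when $\sum_i f(s_i)\equiv 0 \pmod 2$, which is exactly when $\PhibIs^k((f(s_i))_i)=1$. Likewise $\PhiIs^k(\vecs)=-1$ exactly when $\PhibIs^k((f(s_i))_i)=0$.

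This gives the linear relation
\[
\PhiIs^k\bigl((s_i)_{i=1}^k\bigr)=2\,\PhibIs^k\bigl((f(s_i))_{i=1}^k\bigr)-1,
\]
so $\alpha_k=2$ and $\beta_k=-1$ for every $k\ge 0$, with $\alpha_k\neq0$. This is precisely the map of Remark~\ref{rem:binary_reduction} with $u=1$ and $v=-1$, read in the inverse direction.

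Both families are symmetric in their arguments, so the condition of Definition~\ref{def:family} holds, and nothing further needs to be checked. The only potential obstacle is bookkeeping: the direction of the linear relation, and the degenerate case $k=0$. Both are handled above, so the proof is essentially a direct verification.
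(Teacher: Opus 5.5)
Your proposal is correct and matches the paper's proof: the same bijection $f(s) = \frac{1-s}{2}$ (so $s = (-1)^{f(s)}$), the same parity identity, and the same constants $\alpha_k = 2$, $\beta_k = -1$. Your explicit check of the $k=0$ case is a small addition the paper leaves implicit.
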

\begin{proof}
    Define the bijection $f \colon S \to S' \colon s \mapsto \frac{1-s}{2}$, which maps $1 \mapsto 0$ and $-1 \mapsto 1$. Under this bijection, $s_i = (-1)^{f(s_i)}$, so
    \[
    \prod_{i=1}^k s_i = \prod_{i=1}^k (-1)^{f(s_i)} = (-1)^{\sum_{i=1}^k f(s_i)}.
    \]
    This equals $+1$ if $\sum_{i=1}^k f(s_i) \equiv 0 \mod{2}$ and $-1$ otherwise. Comparing with the definition of $\PhibIs^k$, we obtain
    \[
    \prod_{i=1}^k s_i = 2\,\PhibIs^k((f(s_i))_{i=1}^k) - 1,
    \]
    so that
    \[
    \PhiIs^k((s_i)_{i=1}^k) = 2\,\PhibIs^k((f(s_i))_{i=1}^k) - 1.
    \]
    This is the required linear relation from Definition~\ref{def:modclassiso} with $\alpha_k = 2$ and $\beta_k = -1$.
\end{proof}

\begin{definition}[Delta Potts Interaction Family]
    \label{def:delpottsfam}
    The Delta Potts interaction family $\mcp = (S, \{\PhiPotts^k\}_{k \in \mathbb{Z}_{\geq 0}})$, for a given integer $q > 0$, is defined by the \sset $S =[q]$ and interaction functions
    \[
    \PhiPotts^k \colon S^k \to \{0,1\} \colon \vecs \mapsto \delta_k(\vecs) \quad \text { where } \delta_k(\vecs) =  \begin{cases}
        1 & s_i = s_j \, \forall i,j \in [k] \\
        0 & \text{else}
    \end{cases}.
    \]
\end{definition}

In Example~\ref{ex:graphmodequiv}, we showed that for a graph $G$ and $q=2$, the Delta Potts model is isomorphic to the Parity Ising model: $\Mo(G; \mcpt) \cong \Mo(G; \mci)$. It is natural to ask whether this equivalence lifts to the level of interaction families, which would imply an isomorphism $\Mo(G; \mcpt) \cong \Mo(G; \mci)$ for all hypergraphs $G$. The following proposition shows that this is not the case: the two interaction families are fundamentally distinct, and their equivalence on graphs is a low-order coincidence that does not persist for higher-order interactions.

\begin{proposition}[Non-Isomorphism of Delta Potts and Parity Ising Interaction Families]
\label{prop:potts_parity_noniso}
The $q=2$ Delta Potts interaction family $\mcpt$ and the Parity Ising interaction family $\mci$ are not isomorphic: $\mcpt \not\cong \mci$.
\end{proposition}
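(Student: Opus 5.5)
The plan is to argue by contradiction, as in Example~\ref{ex:graphmodequiv}, using an invariant that any isomorphism in the sense of Definition~\ref{def:modclassiso} must preserve: the partition of $S^k$ into level sets of the interaction function. Suppose there is a bijection $f\colon \{0,1\}\to\{-1,+1\}$ (taking $S=\{0,1\}$ for $\mcpt$, as in Example~\ref{ex:graphmodequiv}) and constants $\alpha_k,\beta_k$ with $\PhiPotts^k(\vecs)=\alpha_k\PhiIs^k(\hat f(\vecs))+\beta_k$ for all $k$ and all $\vecs\in S^k$. Here $\hat f$ applies $f$ componentwise.

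First I dispose of the degenerate case $\alpha_k=0$. Definition~\ref{def:modclassiso} does not explicitly demand $\alpha_k\neq 0$, so I handle it separately rather than assume it away. If $\alpha_k=0$, then $\PhiPotts^k$ would be constant. But for $k\ge 1$ it takes value $1$ on constant tuples and, for $k\ge2$, value $0$ on non-constant ones, so it is not constant. Hence $\alpha_k\neq0$ for every $k\ge2$.

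Once $\alpha_k\neq 0$, the affine map $x\mapsto \alpha_k x+\beta_k$ is injective. Since $\hat f$ is a bijection of $S^k$ onto $\{-1,1\}^k$, the level sets of $\PhiPotts^k$ correspond bijectively, with equal cardinalities, to the level sets of $\PhiIs^k$. I then count these sets at $k=3$, which is the main step.

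For $\PhiPotts^3$ the value $1$ is attained exactly on the two constant tuples, and the value $0$ on the remaining six, giving sizes $(2,6)$. For $\PhiIs^3$ the product $s_1s_2s_3$ equals $+1$ on the tuples with an even number of $-1$'s and $-1$ on those with an odd number, giving $1+3=4$ tuples each and sizes $(4,4)$. The multisets $\{2,6\}$ and $\{4,4\}$ differ, so no such $f,\alpha_3,\beta_3$ exist, which is a contradiction.

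More generally, for every $k\ge3$ the Potts level sets have sizes $(2,2^k-2)$ while the Parity Ising ones have sizes $(2^{k-1},2^{k-1})$, and these agree only when $k=2$. This explains why Example~\ref{ex:graphmodequiv} is a low-order coincidence. I expect no real obstacle here; the only care needed is the $\alpha_k=0$ case and making the level-set cardinality argument rigorous via injectivity of the affine map.
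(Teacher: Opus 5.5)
Your proof is correct and follows the same route as the paper: both compare the level-set cardinalities of the degree-$3$ interaction functions, $(2,6)$ for $\PhiPotts^3$ versus $(4,4)$ for $\PhiIs^3$, which any injective affine relation composed with a bijection of $S^3$ would have to preserve. Your separate treatment of $\alpha_k = 0$ is a genuine improvement in rigour. The paper simply asserts $\alpha_k \neq 0$, although Definition~\ref{def:modclassiso} does not require it, and your observation that $\PhiPotts^3$ is non-constant settles that point.
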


\begin{proof}
An isomorphism of interaction families requires a linear relation $\Phi^k(\vecs) = \alpha_k \Psi^k(\pi(\vecs)) + \beta_k$ with $\alpha_k \neq 0$ for all $k$. Since $\alpha_k \neq 0$, this map is invertible and must therefore preserve the cardinalities of the level sets of the interaction functions. We show that these cardinalities differ for $k = 3$.

The Delta Potts interaction $\PhiPotts^3$ takes the value $1$ on $2$ configurations, $(0,0,0)$ and $(1,1,1)$, and $0$ on the remaining $6$ configurations. The Parity Ising interaction $\PhiIs^3$ takes the value $1$ on $4$ configurations (those with an even number of $-1$'s) and $-1$ on the other $4$ configurations. Since the level set cardinalities $(2, 6)$ and $(4, 4)$ do not match, no invertible linear transformation can relate the two interactions, and hence $\mcpt \not\cong \mci$.
\end{proof}

\begin{definition}[And Ising Interaction Family]
    The And Ising interaction family $\mca = (S, \{\PhiAnd^k\}_{k \in \mathbb{Z}_{\geq 0}})$ is defined by the \sset $S = \{0,1\}$ and interaction functions
    \[
    \PhiAnd^k \colon S^k \to \{0,1\} \colon (s_i)_{i=1}^k \mapsto \prod_{i=1}^k s_i.
    \]
\end{definition}

\subsection{Loopblind interaction families}

While the above interaction families are defined for multiset edges (loops), some interaction families are more naturally defined on loopless hypergraphs. To formalize this we define loopblind interaction families, which are interaction families whose functions on multisets can be reduced to another function in the interaction family on a set. This property will play an important role in Section~\ref{sec:delcon}, where we develop a theory of deletion--contraction for boolean interaction families: the loopblind condition ensures that the contraction of a hyperedge produces a canonical contracted model within the class of loopless hypergraphical models of that interaction family.

\begin{definition}[Loopblind Interaction Family]
    \label{def:loopless}
    A \emph{loopblind interaction family} is an interaction family $\IFam = (S, \{\Phi^k\}_{k \in \mathbb{Z}_{\geq 0}})$ that satisfies that for every $k \in \mathbb{Z}_{> 0}$ and every multisubset $A \subset [k]$ such that $\Phi^{|A|}$ is not constant, there is a unique \emph{subset} $A' \subseteq A$ such that $\Phi^{|A|}(\vecs|_A) = \Phi^{|A'|}(\vecs|_{A'})$ for all $\vecs \in S^k$.
\end{definition}

\begin{proposition}
    \label{prop:loopless_uniqueness}
    In Definition~\ref{def:loopless}, the uniqueness of $A'$ is automatic: if there exists any subset $A' \subseteq A$ satisfying $\Phi^{|A|}(\vecs|_A) = \Phi^{|A'|}(\vecs|_{A'})$ for all $\vecs \in S^k$ and $\Phi^{|A|}$ is not constant, then $A'$ is unique.
\end{proposition}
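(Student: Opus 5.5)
Suppose $A'_1, A'_2 \subseteq A$ are two subsets with $\Phi^{|A|}(\vecs|_A) = \Phi^{|A'_1|}(\vecs|_{A'_1}) = \Phi^{|A'_2|}(\vecs|_{A'_2})$ for all $\vecs \in S^k$, and $\Phi^{|A|}$ is not constant. We want to show $A'_1 = A'_2$. We argue by contradiction: assume there is an index $j$ in the symmetric difference, without loss of generality $j \in A'_1 \setminus A'_2$.

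The key observation is that the function $\vecs \mapsto \Phi^{|A'_2|}(\vecs|_{A'_2})$ does not depend on the coordinate $s_j$, since $j \notin A'_2$. Hence the common function $F(\vecs) := \Phi^{|A|}(\vecs|_A)$ is independent of $s_j$. We then want to use this to show that $F$ is constant, contradicting non-constancy. The difficulty is that $j \in A'_1$ does not by itself force $F$ to depend on $s_j$. So the plan is to exploit the symmetry of $\Phi^{|A'_1|}$ (Definition~\ref{def:family}(1)). Because $A'_1$ is a set, every coordinate in $A'_1$ enters $\Phi^{|A'_1|}$ symmetrically. So if $F = \Phi^{|A'_1|}(\vecs|_{A'_1})$ is independent of $s_j$, it is independent of every $s_i$ with $i \in A'_1$: permuting arguments exchanges the role of $j$ with any other $i \in A'_1$. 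A function that does not change when any single one of its arguments is changed is constant, since any two tuples are joined by a chain of single-coordinate changes.

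Thus $\Phi^{|A'_1|}$ is constant on $S^{|A'_1|}$. Consequently $F$, and therefore $\Phi^{|A|}(\vecs|_A)$, is constant in $\vecs \in S^k$. We still need $\Phi^{|A|}$ itself to be constant on all of $S^{|A|}$, and this is where repeated entries need care. The map $\vecs \mapsto \vecs|_A$ from $S^k$ to $S^{|A|}$ is not surjective when $A$ has repeated elements; it hits only tuples that agree on the copies of each repeated element. The hypothesis of Definition~\ref{def:loopless}, however, concerns non-constancy of $\Phi^{|A|}$, which I will read in context as non-constancy of $\vecs \mapsto \Phi^{|A|}(\vecs|_A)$. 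That is the only sensible reading, since otherwise the defining identity for $A'$ could never be meaningfully compared.

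I expect this interpretive step to be the main obstacle. Under that reading the contradiction is immediate. If one insists instead on the literal reading, I would note that the statement only requires non-constancy in the sense used within Definition~\ref{def:loopless}, and I would state the proposition with that clarification. The only remaining edge case is $A'_1 = \emptyset$, but then no $j \in A'_1$ exists. So distinct subsets always produce an index in one but not the other, and the argument covers all cases.
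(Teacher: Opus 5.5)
Your argument is correct and is essentially the paper's proof: an index in $A'_1 \setminus A'_2$ makes the common function independent of that coordinate, and symmetry of $\Phi^{|A'_1|}$ then forces it to be constant (this step uses that $A'_1$ is a set, so $\vecs \mapsto \vecs|_{A'_1}$ is onto). Your interpretive caveat is well founded. The paper's closing step, ``this contradicts the assumption that $\Phi^{|A|}$ is not constant,'' silently uses non-constancy of $\vecs \mapsto \Phi^{|A|}(\vecs|_A)$. Under the literal reading uniqueness genuinely fails: for Delta Potts with $A=\{1,1\}$, $\Phi^0$ and $\Phi^1$ are both identically $1$, so both $\emptyset$ and $\{1\}$ reproduce $\delta_2(s_1,s_1)\equiv 1$, even though $\Phi^2$ is not constant on $S^2$ for $q \ge 2$.
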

\begin{proof}
    Suppose $A'$ and $\widetilde{A}$ are two subsets of $A$ satisfying the condition. Then $\Phi^{|A'|}(\vecs|_{A'}) = \Phi^{|\widetilde{A}|}(\vecs|_{\widetilde{A}})$ for all $\vecs \in S^k$. If $A' \neq \widetilde{A}$, there exists an index $i \in A' \setminus \widetilde{A}$. Then $\Phi^{|A'|}(\vecs|_{A'})$ is independent of $s_i$, since the right-hand side does not depend on it. But $\Phi^{|A'|}$ is symmetric, so a function that is independent of one of its arguments must be independent of all of them, and hence constant. This contradicts the assumption that $\Phi^{|A|}$ is not constant.
\end{proof}

\begin{proposition}
    \label{prop:loopless_reduction}
    Let $\IFam$ be a loopblind interaction family. For any hypergraph $G = (V,E)$, there exists a loopless hypergraph $\overline{G} \in \overline{\mathcal{G}}$ such that $p(\vecs \mid \Mo(G; \IFam), \vecg) = p(\vecs \mid \Mo(\overline{G}; \IFam), \vecg)$ for all $\vecs$ and $\vecg$.
\end{proposition}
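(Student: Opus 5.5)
The plan is to build $\overline{G}$ edge by edge, replacing each loop by the set it reduces to under Definition~\ref{def:loopless}, and then to compare the two distributions factor by factor. Fix a hypergraph $G=(V,E)$ with $V$ identified with $[k]$ for $k=|V|$. For each edge $e\in E$, viewed as a multisubset of $V$, I would distinguish two cases. If $\Phi^{|e|}$ is not constant, loopblindness provides a unique subset $e'\subseteq e$ with $\Phi^{|e|}(\vecs|_e)=\Phi^{|e'|}(\vecs|_{e'})$ for all $\vecs\in S^{|V|}$. If $\Phi^{|e|}$ is constant, say equal to $c_e$, the factor $\exp(g_e c_e)$ does not depend on $\vecs$; I would then replace $e$ by the empty edge $e'=\emptyset$. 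Non-loop edges are kept as they are, with $e'=e$.

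Next I would set $\overline{G}=(V,\overline{E})$, where $\overline{E}=\{e' : e\in E\}$ as a multiset, and keep the parameter of each $e$ on its replacement $e'$. Each $e'$ is a set, so $\overline{G}$ is loopless. The vertex set is unchanged, so both models share the state space $\statespace$. Under the bijection $e\leftrightarrow e'$ the parameter vector $\vecg$ transfers directly. The induced interaction functions also satisfy the locality and duplicate conditions of Definition~\ref{def:graphmod} automatically, since they come from Definition~\ref{def:modinst}.

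For the comparison, when $\Phi^{|e|}$ is non-constant the factors $\exp(g_e\phi^e(\vecs))$ and $\exp(g_e\phi^{e'}(\vecs))$ agree for every $\vecs$. When $\Phi^{|e|}$ is constant, the unnormalized weight of $\Mo(G;\IFam)$ carries the constant factor $\exp(g_e c_e)$. In $\Mo(\overline{G};\IFam)$ the empty edge carries the factor $\exp(g_e\Phi^0)$, which is also constant, although possibly a different constant. Either way the two unnormalized weights differ only by a $\vecs$-independent multiplicative constant. That constant cancels between numerator and partition function, which gives equality of $p(\vecs\mid\cdot,\vecg)$ for all $\vecs$ (wherever $Z\neq 0$).

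The main obstacle is this constant-function case. Definition~\ref{def:loopless} says nothing about it, so I need a replacement edge that still leaves the normalized distribution unchanged. The cancellation argument above handles it, in the same spirit as Proposition~\ref{prop:edge_redundancy}(ii). If matching the parameter index set exactly matters, I would instead simply drop such edges and invoke Proposition~\ref{prop:edge_redundancy}(ii). A minor secondary point is that loopblindness is stated for multisubsets of $[k]$ with $\vecs\in S^k$, and this has to be matched to edges of $G$ through the identification $V=[k]$.
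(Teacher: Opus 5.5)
Your proposal is correct and follows the paper's proof. Like the paper, you replace each non-constant loop by the unique subset that loopblindness provides, dispose of edges with constant $\Phi^{|e|}$ by the constant-factor cancellation used in Proposition~\ref{prop:edge_redundancy}, and compare the Boltzmann weights factor by factor. The one difference is in the constant case: the paper deletes such edges, while you replace them by an empty edge, which keeps the edge index set unchanged so that the statement's ``same $\vecg$'' holds literally.
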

\begin{proof}
    Let $e \in E$ be a multiset hyperedge. By Definition~\ref{def:loopless}, if $\Phi^{|e|}$ is not constant, there is a unique subset $e' \subseteq e$ such that $\Phi^{|e|}(\vecs|_e) = \Phi^{|e'|}(\vecs|_{e'})$ for all $\vecs \in S^{|V|}$. If $\Phi^{|e|}$ is constant $e$ can be removed without altering the probabilities by the same logic as in the proof of Proposition~\ref{prop:edge_redundancy}. Replacing every multiset hyperedge by its corresponding subset yields a loopless hypergraph $\overline{G}$ with the same interaction functions, and therefore the same maximum-entropy distribution.
\end{proof}

\begin{proposition}
    The interaction families $\mci, \mcp$, and $\mca$ are loopblind.
\end{proposition}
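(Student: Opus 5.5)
We verify Definition~\ref{def:loopless} directly for each family. Fix $k>0$ and a multisubset $A$ of $[k]$ with $\Phi^{|A|}$ non-constant. For each $i\in[k]$ write $m_i$ for the multiplicity of $i$ in $A$. We then exhibit an explicit subset $A'\subseteq A$ (its underlying support, or a part of it) satisfying $\Phi^{|A|}(\vecs|_A)=\Phi^{|A'|}(\vecs|_{A'})$ for all $\vecs\in S^k$. Uniqueness is automatic by Proposition~\ref{prop:loopless_uniqueness}, so existence is the only thing to prove.

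\textbf{Delta Potts and And Ising.} The values depend only on which spin values occur, not on how often. For $\mcp$, $\delta_{|A|}(\vecs|_A)=1$ exactly when all $s_i$ with $m_i\ge 1$ agree. Hence $A'=\{i : m_i\ge1\}$ works. For $\mca$, $\prod_{i}s_i^{m_i}=\prod_{i:m_i\ge1}s_i$ because $s_i\in\{0,1\}$ gives $s_i^m=s_i$ for $m\ge1$. Again $A'=\{i:m_i\ge 1\}$ works.

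\textbf{Parity Ising.} Here $s_i^{m_i}=s_i$ if $m_i$ is odd and $s_i^{m_i}=1$ if $m_i$ is even, since $s_i\in\{-1,1\}$. Thus $\prod_i s_i^{m_i}=\prod_{i: m_i \text{ odd}} s_i$, and we take $A'=\{i : m_i \text{ odd}\}\subseteq A$.

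This is the one point needing care. If $A'$ were empty, the right-hand side would be the empty product $\Phi^0=1$, which is consistent as an identity of functions. In that case, however, $\Phi^{|A|}(\vecs|_A)$ is constant as a function of $\vecs$. One must therefore read the hypothesis ``$\Phi^{|A|}$ not constant'' as referring to the composite $\vecs\mapsto\Phi^{|A|}(\vecs|_A)$, since $\Phi^{|A|}$ itself is never constant for $\mci$ when $|A|\ge1$. I would state explicitly that under this reading the non-constant cases have $A'\neq\emptyset$. I would also note that the identity with $A'=\emptyset$ holds regardless, and that Proposition~\ref{prop:loopless_reduction} in any case discards edges with constant interaction.

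Finally, I would remark that $\mcib$ is loopblind by the same parity argument, or via the isomorphism of Proposition~\ref{prop:parisboiso}. The only real obstacle is this interpretive point about the hypothesis; every other step is a one-line identity.
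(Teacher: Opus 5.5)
Your argument is correct and is essentially the paper's proof: $A'=A_{\mathrm{set}}$ for $\mcp$ and $\mca$, and $A'=A_{\mathrm{odd}}$ for $\mci$, with uniqueness supplied by Proposition~\ref{prop:loopless_uniqueness}. Your interpretive caveat is needed, but for $\mcp$ rather than $\mci$. Under the literal reading $\mci$ is fine, since when all multiplicities are even $A'=\emptyset$ is the only subset with constant product. For $\mcp$ with $A=\{1,1\}$ (and $q\ge 2$), $\Phi^2$ is non-constant, yet both $A'=\emptyset$ and $A'=\{1\}$ satisfy the identity because $\Phi^0\equiv\Phi^1\equiv 1$, so uniqueness holds only under your composite reading.
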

\begin{proof}
    Let $k \in \mathbb{Z}_{>0}$, let $A$ be a multiset of elements in $[k]$, and let $\vecs \in S^k$. Let $m_i \geq 1$ denote the multiplicity of element $i$ in $A$, and let $A_{\mathrm{set}}$ be the underlying set of distinct elements in $A$.

    For the And Ising interaction family $\mca$, the interaction evaluates the product of the spins. Since $s_i \in \{0,1\}$, we have $s_i^{m_i} = s_i$. Therefore:
    \[ \Phi^{|A|}_{\mathrm{And-Is}}(\vecs|_A) = \prod_{i \in A} s_i = \prod_{i \in A_{\mathrm{set}}} s_i^{m_i} = \prod_{i \in A_{\mathrm{set}}} s_i = \Phi^{|A_{\mathrm{set}}|}_{\mathrm{And-Is}}(\vecs|_{A_{\mathrm{set}}}). \]
    Thus, the unique subset is $A' = A_{\mathrm{set}}$.

    For the Delta Potts interaction family $\mcp$, the interaction $\Phi^{|A|}_{\mathrm{q-Potts}}(\vecs|_A)$ equals $1$ if and only if all spins in the multiset $A$ are equal. Repeating an element does not change whether the set of elements shares a single value. So $\Phi^{|A|}_{\mathrm{q-Potts}}(\vecs|_A)$ equals $1$ if and only if all spins in the underlying set $A_{\mathrm{set}}$ are equal. Thus, $\Phi^{|A|}_{\mathrm{q-Potts}}(\vecs|_A) = \Phi^{|A_{\mathrm{set}}|}_{\mathrm{q-Potts}}(\vecs|_{A_{\mathrm{set}}})$, and the unique subset is again $A' = A_{\mathrm{set}}$.

    For the Parity Ising interaction family $\mci$ we can reduce the multisubsets by taking multiple occurences of a single vertex modulo $2$. Since $s_i \in \{-1,1\}$ repeating a spin $m_i$ times contributes $s_i^{m_i}$ to the product. Let $A_{\mathrm{odd}} = \{i \in A_{\mathrm{set}} \mid m_i \text{ is odd} \}$ be the subset of elements that appear an odd number of times in $A$. Because we only consider sums modulo $2$, we have:
    \[ \Phi^{|A|}_{\mathrm{Par-Is}}(\vecs|_A) = \prod_{i \in A} s_i = \prod_{i \in  A_{\mathrm{odd}}} s_i = \Phi^{|A_{\mathrm{odd}}|}_{\mathrm{Par-Is}}(\vecs|_{A_{\mathrm{odd}}}). \]
    Thus, the unique subset for the Parity Ising model is $A' = A_{\mathrm{odd}}$.
    Since a unique subset $A'$ exists for all three interaction families, they are all loopblind.
\end{proof}

\section{Deletion--contraction recurrence for loopblind boolean interaction families}
\label{sec:delcon}

In this section we show that the partition functions of models in boolean interaction families can be expressed in terms of an associated rank function. Then we give a set of sufficient conditions for the partition functions of boolean interaction families to satisfy a deletion--contraction recurrence. This thus achieves our original goal of establishing a `Tutte-polynomial-like' theory of deletion--contraction for an important type of interaction family.

\subsection{Rank generating function contraction}
To start, we introduce the rank function of a boolean hypergraphical model. We then show that the rank function contains all information in the model about its partition function.
\begin{definition}[Counting and Rank Function]
\label{def:rankfunc}
Let $\Mo = (G=(V,E),\{\phi^e\}_{e\in E})$ be a boolean hypergraphical model over $S$ with $|S| = q$.
We introduce its \emph{counting function}
\[
R_\Mo(A) = \#\{\vecs \in S^{|V|} \mid \phi^e(\vecs) = 1 \quad \forall e\in A\} \quad \text{for } A\subseteq E
\]
and the associated \emph{rank function} on $2^E$ by   
\[
r_\Mo(A) = \begin{cases} |V|- \log_q  R_\Mo(A) &\text{if } R_{\Mo}(A) > 0 \\
\infty &\text{else}
\end{cases} \quad \text{for } A\subseteq E. 
\] 
\end{definition}

We shall adopt the convention that, for $R_{\Mo}(A) = 0$, we have
\[ q^{-r_\Mo(A)} = q^{-\infty} = 0 = R_{\Mo}(A),\]
this will allow us to work with the rank function even if it is infinite, particularly in Section \ref{sec:polrank} where we examine when $r_\Mo(A)$ satisfies the necessary criteria to define a polymatroid.

\begin{example}[Rank Function of the Delta Potts Model on a Graph]
    \label{ex:rankfunc_potts_graph}
    Let $G = (V,E)$ be a simple graph and consider the Delta Potts model $\Mo(G; \mcp)$ from Definition~\ref{def:delpottsfam} with spin value set $S = [q]$. For a subset $A \subseteq E$, the counting function $R_{\Mo}(A)$ counts the number of configurations $\vecs \in S^{|V|}$ such that $\delta(s_i, s_j) = 1$ for all $\{i,j\} \in A$, i.e.\ the number of configurations in which $s_i = s_j$ whenever $\{i,j\} \in A$. This forces all spins within a connected component of the spanning subgraph $G_A = (V, A)$ to take the same value. Since there are $\kappa(G_A)$ connected components and $q$ choices per component, we obtain
    \[
    R_{\Mo}(A) = q^{\kappa(G_A)}.
    \]
    The rank function is therefore
    \[
    r_{\Mo}(A) = |V| - \log_q R_{\Mo}(A) = |V| - \kappa(G_A),
    \]
    which is precisely the rank function of the graphical matroid of $G$.
\end{example}

For a singleton $A = \{e\}$, we write $r_{\Mo}(e)$ instead of $r_{\Mo}(\{e\})$. Even though we have not (yet) associated any matroid or polymatroid to this rank function, we can nevertheless associate a formal polynomial to the rank function. We first define it abstractly, and then specialize to the rank function of a boolean hypergraphical model.
\begin{definition}[Rank generating function]
\label{def:abstract_rankpoly}
Let $E$ be a finite set and $r \colon 2^E \to \mathbb{R}_{\geq 0} \cup
\{\infty\}$ a function. The \emph{rank generating function}
of $r$ is
\[
    \widetilde Z(q, \vecv;\, r)
    \;:=\;
    \sum_{A \subseteq E} q^{-r(A)} \prod_{e \in A} v_e,
\]
where $q$ and $\vecv = (v_e)_{e \in E}$ are formal commuting variables,
with the convention $q^{-\infty} = 0$. When $r$ is integer-valued,
$\widetilde Z(q, \vecv;\, r)$ is a polynomial in $\vecv$ and $q^{-1}$.
\end{definition}

\begin{proposition}
    \label{prop:boolmodcountex}
    Let $\Mo = (G = (V,E), \{\phi^e\}_{e \in E} )$ be a boolean hypergraphical model. Then we can write the partition function as follows:
    \[
    Z(\Mo; \vecg) = q^{|V|} \widetilde{Z}(q, \vecv; r_\Mo),
    \]
    where $v_e = \exp(g_e) - 1$.
\end{proposition}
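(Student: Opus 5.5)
The plan is the standard Fortuin--Kasteleyn style expansion. Since each $\phi^e$ takes values in $\{0,1\}$, for every configuration $\vecs$ we can write
\[
\exp\!\left(g_e\phi^e(\vecs)\right) = 1 + \left(e^{g_e}-1\right)\phi^e(\vecs) = 1 + v_e\,\phi^e(\vecs).
\]
This holds by checking the two cases $\phi^e(\vecs)=0$ and $\phi^e(\vecs)=1$ separately. Substituting into the definition of $Z(\Mo;\vecg)$ (Definition~\ref{def:maxent}) gives $Z(\Mo;\vecg)=\sum_{\vecs\in\statespace}\prod_{e\in E}(1+v_e\phi^e(\vecs))$.

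Next I expand the product over $E$ as a sum over subsets $A\subseteq E$, namely $\prod_{e\in E}(1+v_e\phi^e(\vecs))=\sum_{A\subseteq E}\prod_{e\in A}v_e\prod_{e\in A}\phi^e(\vecs)$. I then interchange the two finite sums. Because the $\phi^e$ are boolean, $\prod_{e\in A}\phi^e(\vecs)$ is the indicator that $\phi^e(\vecs)=1$ for all $e\in A$. Summing this indicator over $\vecs\in S^{|V|}$ gives exactly the counting function $R_\Mo(A)$ of Definition~\ref{def:rankfunc}. This yields
\[
Z(\Mo;\vecg)=\sum_{A\subseteq E}R_\Mo(A)\prod_{e\in A}v_e .
\]

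Finally I rewrite $R_\Mo(A)$ in terms of the rank function. When $R_\Mo(A)>0$, the definition $r_\Mo(A)=|V|-\log_q R_\Mo(A)$ gives $R_\Mo(A)=q^{|V|}q^{-r_\Mo(A)}$. When $R_\Mo(A)=0$, the convention $q^{-\infty}=0$ makes the same identity hold. Pulling out the common factor $q^{|V|}$ and comparing with Definition~\ref{def:abstract_rankpoly} gives $Z(\Mo;\vecg)=q^{|V|}\widetilde Z(q,\vecv;r_\Mo)$.

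No step is a real obstacle. The only points needing care are the infinite-rank case, which is covered by the stated convention, and duplicate or empty hyperedges, which cause no trouble because the expansion treats $E$ as an indexed multiset.
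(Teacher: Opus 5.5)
Your proposal is correct and matches the paper's proof step for step. You write $\exp(g_e\phi^e(\vecs)) = 1 + v_e\phi^e(\vecs)$ (the paper uses $\delta(\phi^e(\vecs),1)$, which is the same for boolean $\phi^e$), expand over subsets $A \subseteq E$, swap the sums to get $R_\Mo(A)$, and rewrite $R_\Mo(A) = q^{|V|}q^{-r_\Mo(A)}$; your explicit handling of the case $R_\Mo(A)=0$ via the convention $q^{-\infty}=0$ makes precise a point the paper's computation leaves implicit.
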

\begin{proof}
    We can write
    \begin{align*}
        Z(\Mo; \vecg) &= \sum_{\vecs \in \statespace} \prod_{e \in E} \exp(g_e \phi^e(\vecs)) \\
        &= \sum_{\vecs \in \statespace} \prod_{e \in E} (1 + (\exp(g_e) -1)\delta(\phi^e(\vecs),1)) \\
        &= \sum_{\vecs \in \statespace} \sum_{A \subseteq E} \prod_{e\in A} v_e \delta(\phi^e(\vecs),1) \\
        &= \sum_{A \subseteq E} \left( \prod_{e\in A} v_e \right) \left( \sum_{\vecs \in \statespace} \prod_{e\in A} \delta(\phi^e(\vecs),1) \right) \\
        &= \sum_{A \subseteq E} R_{\Mo}(A) \prod_{e \in A} v_e \\
        &= q^{|V|}\sum_{A \subseteq E} q^{-r_{\Mo}(A)} \prod_{e \in A} v_e = q^{|V|} \widetilde{Z}(q, \vecv; r_\Mo).
    \end{align*}
\end{proof} 
Proposition \ref{prop:boolmodcountex} shows that the partition function of a boolean model only depends on its counting function, which we formalize in the following corollary.
\begin{corollary}
\label{cor:rank_determines_partition}
Let $\Mo = (G=(V,E), \{\phi^e\}_{e \in E})$ and $\Mo' = (G'=(V',E),
\{\psi^e\}_{e \in E})$ be two boolean hypergraphical models over a common
edge set $E$ and a common spin value set size $q = |S| = |S'|$. Then
\[
    Z(\Mo; \vecg) = q^{\,|V| - |V'|}\, Z(\Mo'; \vecg)
    \qquad \text{for all } \vecg \in \mathbb{C}^{|E|}
\]
if and only if $r_\Mo = r_{\Mo'}$, that is, $r_\Mo(A) = r_{\Mo'}(A)$ for
every $A \subseteq E$.
\end{corollary}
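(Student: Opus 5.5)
By Proposition~\ref{prop:boolmodcountex}, both partition functions are polynomials in the variables $v_e = e^{g_e}-1$ with coefficients given by the counting functions:
\[
Z(\Mo;\vecg) = \sum_{A\subseteq E} R_\Mo(A)\prod_{e\in A} v_e,
\qquad
Z(\Mo';\vecg) = \sum_{A\subseteq E} R_{\Mo'}(A)\prod_{e\in A} v_e .
\]
The plan is to reduce the claimed identity to a coefficientwise comparison of these two multilinear polynomials in $\vecv$. I use that $R_\Mo(A) = q^{|V|}q^{-r_\Mo(A)}$, and similarly for $\Mo'$, with the convention $q^{-\infty}=0$.

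For the ``if'' direction, assume $r_\Mo = r_{\Mo'}$. Then for every $A$,
\[
R_\Mo(A) = q^{|V|-r_\Mo(A)} = q^{|V|-|V'|}\,q^{|V'|-r_{\Mo'}(A)} = q^{|V|-|V'|}R_{\Mo'}(A).
\]
This also holds in the infinite case, where both sides vanish. Summing against $\prod_{e\in A}v_e$ gives the identity for every $\vecg$.

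For the ``only if'' direction, assume the identity holds for all $\vecg\in\mathbb{C}^{|E|}$. Since $g\mapsto e^g-1$ maps $\mathbb{C}$ onto $\mathbb{C}\setminus\{-1\}$, the polynomial
\[
P(\vecv) = \sum_{A}\bigl(R_\Mo(A) - q^{|V|-|V'|}R_{\Mo'}(A)\bigr)\prod_{e\in A}v_e
\]
vanishes on $(\mathbb{C}\setminus\{-1\})^{|E|}$. Each variable ranges over an infinite set, so $P$ vanishes on a Zariski-dense set. Hence $P$ is the zero polynomial; this follows by the standard induction on the number of variables. The monomials $\prod_{e\in A}v_e$ for distinct $A$ are distinct, so every coefficient is zero. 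Therefore $R_\Mo(A) = q^{|V|-|V'|}R_{\Mo'}(A)$ for all $A$.

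It remains to translate this back to the rank functions. If $R_{\Mo'}(A)=0$ then $R_\Mo(A)=0$, and both ranks are $\infty$. Otherwise both counts are positive, and taking $\log_q$ gives $|V| - r_\Mo(A) = |V| - r_{\Mo'}(A)$. Here I use $q\ge 2$ so that $\log_q$ is defined; the degenerate case $q=1$ should be noted separately, as it forces all $R$ to be $0$ or $1$. This yields $r_\Mo(A) = r_{\Mo'}(A)$.

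The only step needing care is the polynomial identity argument: $\vecg$ ranges over $\mathbb{C}^{|E|}$ but $\vecv$ misses $-1$. This is harmless because a multilinear polynomial vanishing on a product of infinite sets is zero.
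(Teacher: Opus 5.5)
Your proof is correct and follows the same route as the paper. Both arguments use Proposition~\ref{prop:boolmodcountex} to write the partition functions as multilinear polynomials in $v_e = e^{g_e}-1$ and then compare coefficients; you are also more careful than the paper's one-line proof, which does not note that $v_e$ never takes the value $-1$ and does not treat the $r=\infty$ convention or the implicit requirement $q\ge 2$.
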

\begin{proof}
By Proposition~\ref{prop:boolmodcountex}, $q^{-|V|} Z(\Mo; \vecg) =
\sum_{A \subseteq E} q^{-r_\Mo(A)} \prod_{e \in A} v_e$ with $v_e =
\exp(g_e) - 1$, and similarly for $\Mo'$. These are polynomials in the
$v_e$ whose coefficients are exactly the values $q^{-r_\Mo(A)}$, so they
agree for all $\vecg$ if and only if $r_\Mo$ and $r_{\Mo'}$ agree on
every $A \subseteq E$.
\end{proof}

The following theorem generalizes Tutte polynomial counting results to the interaction family setting.

\begin{theorem}[Counting Evaluations]
    \label{theo:counteval}
    Let $\Mo = (G=(V,E), \{\phi^e\}_{e \in E})$ be a boolean hypergraphical model. Then
    \begin{align}
        Z(\Mo; \vecg)\big|_{v_e = 1} &= \sum_{A \subseteq E} R_{\Mo}(A),\\
        Z(\Mo; \vecg)\big|_{v_e = -1} &= \#  \{ \vecs \in \statespace \mid \phi^e(\vecs)=0 \text{ for all } e \in E \}.
    \end{align}

\end{theorem}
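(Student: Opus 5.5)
Both identities follow from the intermediate expansion in the proof of Proposition~\ref{prop:boolmodcountex}. There we showed
\[
Z(\Mo;\vecg)=\sum_{A\subseteq E} R_\Mo(A)\prod_{e\in A} v_e ,
\qquad v_e=\exp(g_e)-1 .
\]
We read this as a polynomial identity in the formal variables $v_e$ and then specialise. Substituting $v_e=1$ for every $e$ makes each product equal to $1$, which gives the first identity immediately. In terms of $\vecg$, this specialisation is $g_e=\log 2$.

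For the second identity we set $v_e=-1$ for all $e$, which corresponds to the degenerate value $\exp(g_e)=0$. We therefore work with the factorised form from the same proof rather than the exponential form:
\[
Z(\Mo;\vecg)=\sum_{\vecs\in\statespace}\prod_{e\in E}\bigl(1+v_e\,\delta(\phi^e(\vecs),1)\bigr).
\]
This is a polynomial in the $v_e$ and agrees with the rank expansion as polynomials, so we may specialise it directly. At $v_e=-1$ each factor becomes $1-\delta(\phi^e(\vecs),1)$. Because $\Mo$ is boolean, $\phi^e(\vecs)\in\{0,1\}$, so this factor is the indicator of $\phi^e(\vecs)=0$. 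The product over $e$ is then the indicator that $\phi^e(\vecs)=0$ for every $e\in E$. Summing over $\vecs$ counts exactly the configurations in the stated set.

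As a cross-check, the same count also follows by inclusion--exclusion from the rank expansion:
\[
\sum_{A\subseteq E}(-1)^{|A|}R_\Mo(A)
\]
counts the configurations lying in none of the sets $\{\vecs : \phi^e(\vecs)=1\}$.

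The only delicate point is interpretive: $v_e=-1$ is not attained by any finite $g_e$. We handle this by stating clearly that the evaluation is of the polynomial in $\vecv$ supplied by Proposition~\ref{prop:boolmodcountex}, equivalently the limit $g_e\to-\infty$. The boolean hypothesis is used exactly once, to identify $1-\delta(\phi^e,1)$ with $\delta(\phi^e,0)$.
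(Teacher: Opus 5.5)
Your proposal is correct and matches the paper's argument. For the first identity, the paper also reads off $\sum_{A} R_\Mo(A)$ from the expansion in the proof of Proposition~\ref{prop:boolmodcountex}. For the second, it starts from $\sum_{A}(-1)^{|A|}R_\Mo(A)$, swaps the sums to re-factorise into $\sum_{\vecs}\prod_{e}(1-\delta(\phi^e(\vecs),1))$, and then uses booleanness to get $\prod_e\delta(\phi^e(\vecs),0)$. That is your specialisation of the factorised form, run in the opposite direction. Your remark that $v_e=-1$ is only a formal evaluation (or the limit $g_e\to-\infty$) addresses a point the paper leaves implicit.
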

\begin{proof}
    The first evaluation follows directly from the proof of Proposition \ref{prop:boolmodcountex}.

    For the second evaluation we note that
    \begin{align*}
         \sum_{A \subseteq E} R_{\Mo}(A) \prod_{e \in A} (-1) &= \sum_{A \subseteq E} (-1)^{|A|} R_{\Mo}(A) \\
         &= \sum_{\vecs \in \statespace} \sum_{A \subseteq E} \prod_{e \in A} (-\delta(\phi^e(\vecs),1)) \\
         &= \sum_{\vecs \in \statespace} \prod_{e \in E}(1- \delta(\phi^e(\vecs),1)) \\
         &= \sum_{\vecs \in \statespace} \prod_{e\in E} \delta(\phi^e(\vecs),0) \\
         &= \# \{ \vecs \in \statespace \mid \phi^e(\vecs)=0 \text{ for all } e \in E \}.
    \end{align*}
\end{proof}

\begin{definition}[Set Function Deletion and Contraction]
    Let $r \colon 2^E \to \mathbb{R}$ be a function and let $e \in E$. The \emph{deletion of} $e$ \emph{from} $r$ is the function $r \backslash e \colon 2^{E \setminus \{e\}} \to \mathbb{R}$ defined by
    \[
    r\backslash e(A) = r(A) \quad \text{for } A \subseteq E \setminus \{e\},
    \]
    and the \emph{contraction of} $e$ \emph{from} $r$ is the function $r/e \colon 2^{E \setminus \{e\}} \to \mathbb{R}$ defined by
    \[
    r/e(A) = r(A \cup \{e\}) \quad \text{for } A \subseteq E \setminus \{e\}.
    \]
\end{definition}

\begin{theorem}
    \label{theo:rankdelcon}
    Let $r \colon 2^E \to \mathbb{R}$, then
    \[
    \widetilde{Z}(q, \vecv; r) =  \widetilde{Z}(q,\vecv';  r\setminus e) + v_e \widetilde{Z}(q, \vecv' ; r/e),
    \]
    where $\vecv' = (v_f)_{f \in E \setminus \{ e\}
    }$.
\end{theorem}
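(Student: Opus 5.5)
The identity follows from splitting the defining sum of $\widetilde{Z}(q,\vecv;r)$ in Definition~\ref{def:abstract_rankpoly} according to whether the subset $A \subseteq E$ contains the fixed element $e$. Subsets of $E$ are in bijection with the disjoint union of $\{A : A \subseteq E\setminus\{e\}\}$ and $\{A\cup\{e\} : A \subseteq E\setminus\{e\}\}$, so
\[
\widetilde{Z}(q,\vecv;r) = \sum_{A \subseteq E\setminus\{e\}} q^{-r(A)} \prod_{f\in A} v_f \;+\; \sum_{A \subseteq E\setminus\{e\}} q^{-r(A\cup\{e\})} \prod_{f \in A\cup\{e\}} v_f .
\]

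For the first sum I note that $A$ never contains $e$, so $r(A) = r\backslash e(A)$ by definition of deletion, and the monomial only involves the variables $v_f$ with $f \neq e$, that is, entries of $\vecv'$. Hence the first sum is exactly $\widetilde{Z}(q,\vecv'; r\backslash e)$ computed on the ground set $E\setminus\{e\}$.

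For the second sum I factor out $v_e$ from each monomial, using $\prod_{f\in A\cup\{e\}} v_f = v_e \prod_{f\in A} v_f$ (valid since $e\notin A$ and the variables commute), and rewrite $r(A\cup\{e\}) = r/e(A)$ by definition of contraction. This gives $v_e\,\widetilde{Z}(q,\vecv'; r/e)$, and adding the two pieces yields the claim.

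There is no real obstacle; the only point to check is that the convention $q^{-\infty}=0$ is harmless. Although the statement writes $r\colon 2^E\to\mathbb{R}$, the same term-by-term argument applies verbatim when $r$ takes the value $\infty$ (as rank functions of boolean models may), since infinite-rank terms vanish identically on both sides. I would remark on this so the theorem can be applied to $r_\Mo$ later.
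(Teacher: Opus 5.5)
Your proposal is correct and is the paper's argument: split the subset sum according to whether $e \in A$, identify the first part with $\widetilde{Z}(q,\vecv';r\setminus e)$, and factor $v_e$ out of the second part to obtain $v_e\,\widetilde{Z}(q,\vecv';r/e)$. Your added remark is a worthwhile clarification, since the theorem is later applied to $r_\Mo$: the convention $q^{-\infty}=0$ lets the same term-by-term argument cover rank functions that take the value $\infty$.
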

\begin{proof}
    This follows directly by splitting the subset sum in the rank generating function as follows:
    \begin{align*}
        \sum_{A \subseteq E} q^{-r(A)} \prod_{f \in A} v_f &= \sum_{\substack{A \subseteq E \\ e \notin A}}q^{-r(A)} \prod_{f \in A} v_f + \sum_{\substack{A \subseteq E \\ e \in A}}q^{-r(A)} \prod_{f \in A} v_f \\
        &= \sum_{A \subseteq E \backslash \{e\}}q^{-r(A)} \prod_{f \in A} v_f + v_e \sum_{A \subseteq E \setminus \{e\}}q^{-r(A \cup \{e\})} \prod_{f \in A} v_f \\
        &= \sum_{A \subseteq E \backslash \{e \}} q^{- r\setminus e(A)} \prod_{f \in A} v_f + v_e \sum_{A \subseteq E \backslash \{e \}} q^{- r/e(A)} \prod_{f \in A} v_f \\
        &=\widetilde{Z}(q, \vecv';  r\setminus e) + v_e \widetilde{Z}(q, \vecv' ; r/e).
    \end{align*}
\end{proof}

\subsection{Loopblind boolean interaction family contraction}

Theorem~\ref{theo:rankdelcon} raises the question of when a contracted rank function $r_{\Mo} / e$ corresponds to the rank function $r_{\Mo/e}$ of a model $\Mo / e$ on a contracted hypergraph. We now introduce sufficient conditions on a boolean interaction family for this to hold. We shall restrict our attention to \emph{loopblind boolean interaction families} so that we take all hypergraphs in this section in the class of loopless hypergraphs $\overline{\mathcal{G}}$.

We begin by fixing some useful auxiliary notation. For a boolean interaction function $\Phi^k \colon S^k \to \{0,1\}$, we write
\[
    K_{\Phi^k} := \{\vecs \in S^k \mid \Phi^k(\vecs) = 1\}
\]
for its \emph{solution set}. Similarly, for a boolean hypergraphical model $\Mo(G; \IFam)$ and a hyperedge $e \in E$, we write
\[
    K_{\phi^e} := \{\vecs \in S^{V} \mid \phi^e(\vecs) = 1\} = \{\vecs \in S^{V} \mid \Phi^{|e|}(\vecs|_e) = 1\}
\]
for solution set of the constraint imposed by $e$. For a subset $A \subseteq E$ we write
\[
K_A = \bigcap_{e\in A} K_{\phi^e}
\]
for the solution set of the constraints imposed by $A$.

Before turning to the technical setup, let us sketch the structure of the argument we are aiming at, which motivates the definitions that follow. Fix a model $\Mo = \Mo(G; \IFam)$ on a loopless hypergraph $G$ and a hyperedge $e \in E$. Given a subset $A \subseteq E \setminus \{e\}$, we want to express the counting function $R_{\Mo}(A \cup \{e\})$ as the counting function $R_{\Mo'}(A')$ of a \emph{smaller} model $\Mo'$ on a contracted hypergraph. Three steps are needed:
\begin{enumerate}
    \item \textbf{Isolate the $e$-constraint.} Writing $R_{\Mo}(A \cup \{e\})$ with $\delta$-functions,
    \[
        R_{\Mo}(A \cup \{e\})
        = \sum_{\vecs \in S^V} \delta\big(\phi^e(\vecs),\, 1\big) \prod_{f \in A} \delta\big(\phi^f(\vecs),\, 1\big),
    \]
    the first factor $\delta(\phi^e(\vecs), 1)$ vanishes unless $\vecs \in K_{\phi^e}$. The sum over $S^V$ therefore collapses to a sum over the solution set of $e$:
    \[
        R_{\Mo}(A \cup \{e\}) = \sum_{\vecs \in K_{\phi^e}} \prod_{f \in A} \delta\big(\phi^f(\vecs),\, 1\big).
    \]
    In other words, satisfying the constraint on $e$ has restricted the state space from $S^V$ to $K_{\phi^e}$. To turn this into a model on a \emph{smaller} vertex set, we need to rewrite $K_{\phi^e}$ as a free state space over fewer vertices, and then check that the remaining constraints $\phi^f$ for $f \in A$ still correspond to interactions of $\IFam$ on that smaller set.

    \item \textbf{Reparameterise $K_{\phi^e}$ as a free state space.} We need $K_{\phi^e}$ to be in bijection with $S^{V \setminus D}$ for some subset $D \subseteq e$, so that summing over $K_{\phi^e}$ becomes summing freely over the remaining vertices. This is the role of \emph{functional representability} (Definition~\ref{def:funcrepr}): the solution set $K_{\Phi^k}$ is parameterised by a partition $[k] = I \sqcup D$ together with a dependency map $\sigma \colon S^I \to S^D$, so that each configuration of the independent variables $\vecs|_I$ uniquely determines $\vecs|_D = \sigma(\vecs|_I)$. Applied to $e$, this gives a bijection $S^{V \setminus D} \leftrightarrow K_{\phi^e}$, which we denote $\vecs \mapsto \hat\sigma(\vecs)$ — it extends $\vecs$ on $V \setminus D$ to all of $V$ by placing $\sigma(\vecs|_I)$ on $D$.

    Under this reparameterisation, each remaining interaction $\phi^f$ with $f \in A$ beco\-mes a \emph{restricted interaction}
    \[
        \widetilde\phi^f(\vecs) := \phi^f\!\big(\hat\sigma(\vecs)|_f\big),
    \]
    depending only on the free variables on $V \setminus D$.

    \item \textbf{Keep the restricted interactions inside the interaction family.} For the sum over $S^{V \setminus D}$ to count configurations of a model in $\IFam$, each restricted interaction $\widetilde\phi^f$ must itself be of the form $\Phi^{|f^*|}(\cdot|_{f^*})$ for some unique subset $f^* \subseteq V \setminus D$. We remark here that the uniqueness of $f^*$ is a consequence of $G$ being loopless, which is one reason why we restrict the family of loopless hypergraphs. This is the role of \emph{contraction-closedness} (Definition~\ref{def:contraction_closed}): substituting $\sigma$ into $\Phi^{|f|}$ produces an interaction of $\IFam$ on a subset of the remaining vertices. The resulting hyperedges $\{f^* \mid f \in A\}$ assemble into the contracted hypergraph $G /_{\IFam, \sigma} e$, and at that point
    \[
        R_{\Mo}(A \cup \{e\}) = R_{\Mo /_{\IFam, \sigma} e}(A),
    \]
    which is the rank-contraction identity we wanted.
\end{enumerate}

The rest of the section makes each of these steps precise. The technical statements live in Proposition~\ref{prop:modelfuncrep} (step~2), Proposition~\ref{prop:restricted_in_class} (step~3), and Theorem~\ref{theo:contraction_closure} (the assembled argument).

\subsubsection{Functional representability}
\begin{definition}
\label{def:funcrepr}
A loopblind boolean interaction family $\IFam = (S, \{\Phi^k\}_{k \in \mathbb{Z}_{\geq 0}})$ is \emph{functionally representable} if, for every $k$ there is a partition $[k] = I \sqcup D$ and a dependency map $\sigma \colon S^{|I|} \to S^{|D|}$ such that 
\[
K_{\Phi^k} = \{ (\vecs_I, \sigma(\vecs_I)) \mid \vecs_I \in S^{|I|} \}. 
\]
\end{definition}

\begin{example}[Examples of Functional Representability]
\label{ex:func_repr}
    We illustrate functional representability with a positive and a negative example.
    
    \textbf{Positive example.} The boolean Parity Ising interaction family $\mcib$ with $S = \mathbb{Z}/2\mathbb{Z}$ is functionally representable. For $k \geq 2$, the solution set consists of all even-weight configurations:
    \[
    K_{\PhibIs^k} = \left\{ (s_i)_{i=1}^k \in S^k \;\middle|\; \sum_{i=1}^k s_i \equiv 0 \mod{2} \right\},
    \]
    which has cardinality $|K_{\PhibIs^k}| = 2^{k-1}$. Setting $I = \{1, \ldots, k-1\}$, $D = \{k\}$, and $\sigma(\vecs_I) \equiv \sum_{i=1}^{k-1} s_i \mod{2}$ gives the required decomposition
    \[
    K_{\PhibIs^k} = \{ (\vecs_I, \sigma(\vecs_I)) \mid \vecs_I \in S^{|I|} \}.
    \]

    \textbf{Negative example.} Define the boolean \emph{OR interaction family} $\IFam^{\mathrm{Or}} = (S, \{\Phi_{\mathrm{Or}}^k\}_{k \in \mathbb{Z}_{\geq 0}})$ with $S = \{0,1\}$ and $\Phi_{\mathrm{Or}}^k(\vecs) = 1$ if and only if $\sum_{i=1}^k s_i \geq 1$ (i.e., at least one spin equals $1$).
    
    This interaction family is not functionally representable. The solution set has cardinality $|K_{\Phi^k_{\mathrm{Or}}}| = 2^k - 1$. For a dependency map $\sigma \colon S^{|I|} \to S^{|D|}$ to exist, the cardinality must equal $|S|^{|I|} = 2^{|I|}$. Since $2^k - 1$ is not a power of $2$ for $k > 1$, no such decomposition exists.
\end{example}

\begin{proposition}[Uniqueness of the Dependency Map]
\label{prop:func_decomp_equiv}
Let $\IFam = (S, \{\Phi^k\}_{k \in \mathbb{Z}_{\geq 0}})$ be a
functionally representable boolean interaction family, and let
$[k] = I \sqcup D$ with dependency map $\sigma$ be a decomposition
witnessing functional representability of $\Phi^k$. Then:
\begin{enumerate}
    \item $\sigma$ has the form $\sigma(\vecs_I) = (f(\vecs_I),
    \ldots, f(\vecs_I))$ for a symmetric function $f \colon S^{|I|}
    \to S$.
    \item For any other decomposition $[k] = I' \sqcup D'$ with
    dependency map $\sigma'$,
    we have $|I| = |I'|$ and $\sigma = \sigma'$ as functions $S^{|I|}
    \to S^{|D|}$ (under the canonical identification of $I$ with
    $I'$).
\end{enumerate}
\end{proposition}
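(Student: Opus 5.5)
The whole argument rests on one observation: because $\Phi^k$ is symmetric (Definition~\ref{def:family} and Remark~\ref{rem:symmetry}), its solution set $K_{\Phi^k}$ is invariant under every coordinate permutation $\pi \in S_k$. I will combine this invariance with the \emph{uniqueness} built into Definition~\ref{def:funcrepr}: for each $\vecs_I \in S^{|I|}$ there is exactly one $\vecs_D$ with $(\vecs_I,\vecs_D) \in K_{\Phi^k}$, namely $\sigma(\vecs_I)$. Both parts then follow by applying a suitable permutation to a solution and reading off what uniqueness forces.

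For part (1) I first show that all components of $\sigma$ agree. Fix $\vecs_I$ and two indices $d_1 \neq d_2$ in $D$.
\begin{enumerate}
    \item Apply the transposition $(d_1\,d_2)$ to $(\vecs_I,\sigma(\vecs_I))$. The result still lies in $K_{\Phi^k}$ and has the same $I$-part $\vecs_I$.
    \item Its $D$-part is $\sigma(\vecs_I)$ with entries $d_1,d_2$ swapped. By uniqueness this swapped vector equals $\sigma(\vecs_I)$, so $\sigma_{d_1}(\vecs_I)=\sigma_{d_2}(\vecs_I)$.
\end{enumerate}
Hence $\sigma = (f,\dots,f)$ for a single $f\colon S^{|I|}\to S$. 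Symmetry of $f$ is proved the same way. Apply a transposition of two indices $i_1,i_2 \in I$ to $(\vecs_I,\sigma(\vecs_I))$. This gives $(\vecs_I',\sigma(\vecs_I)) \in K_{\Phi^k}$, where $\vecs_I'$ is $\vecs_I$ with those two entries swapped. Uniqueness then gives $\sigma(\vecs_I')=\sigma(\vecs_I)$, so $f$ is invariant under transpositions, hence under all permutations. The degenerate cases $|D|\le 1$ or $|I|\le 1$ are vacuous; if $I=\emptyset$, $f$ is a constant.

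For part (2) I start with cardinalities. Counting $K_{\Phi^k}$ in two ways gives $q^{|I|}=|K_{\Phi^k}|=q^{|I'|}$, so $|I|=|I'|$ when $q\ge 2$. If $q=1$ everything is a single point and both maps are trivially equal. Since $|I|=|I'|$, also $|D|=|D'|$, so I can choose a permutation $\pi$ of $[k]$ with $\pi(I)=I'$ and $\pi(D)=D'$; these restrictions are the identifications of $I$ with $I'$ and of $D$ with $D'$. Applying $\pi$ to $(\vecs_I,\sigma(\vecs_I))$ gives an element of $K_{\Phi^k}$ whose $I'$-part is $\vecs_I$ transported along $\pi$ and whose $D'$-part is $\sigma(\vecs_I)$ transported along $\pi$. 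Uniqueness for the decomposition $(I',D',\sigma')$ then gives $\sigma'(\vecs_I)=\sigma(\vecs_I)$ under the identification.

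The main obstacle is being precise about what ``canonical identification'' means. Part (1) resolves it: since $f$ is symmetric and $\sigma$ has constant components, the conclusion does not depend on which bijections $I\to I'$ and $D\to D'$ are chosen. I would therefore prove part (1) first and invoke it at this point. Everything else is the permutation-invariance argument above.
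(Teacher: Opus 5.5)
Your proposal is correct and follows the paper's proof essentially step for step: you combine the permutation invariance of $K_{\Phi^k}$ (from symmetry of $\Phi^k$) with the fact that $K_{\Phi^k}$ is the graph of $\sigma$ to get equal output components and a symmetric $f$, and for part (2) you use the cardinality count together with a permutation $\pi$ satisfying $\pi(I)=I'$ and $\pi(D)=D'$, invoking part (1) to remove the dependence on the identification. Your caveat that $|S|^{|I|}=|S|^{|I'|}$ forces $|I|=|I'|$ only when $q\ge 2$ is a genuine edge case that the paper's proof passes over silently; for $q=1$ the claim $|I|=|I'|$ can in fact fail.
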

\begin{proof}
\textbf{(1)} Symmetry of $\Phi^k$ implies that any permutation
applied independently to $I$-coordinates or $D$-coordinates of an
element of $K_{\Phi^k}$ stays in $K_{\Phi^k}$. Combined with
uniqueness of the dependency map, this gives $\sigma(\pi \cdot
\vecs_I) = \sigma(\vecs_I)$ for any $\pi \in S_{|I|}$ (so $\sigma$
depends only on the multiset of its inputs) and $\tau \cdot
\sigma(\vecs_I) = \sigma(\vecs_I)$ for any permutation $\tau$ of $D$
(so all components of $\sigma(\vecs_I)$ are equal). Writing
$f(\vecs_I)$ for this common value gives the claimed form.

\textbf{(2)} Functional representability gives $|K_{\Phi^k}| =
|S|^{|I|} = |S|^{|I'|}$, so $|I| = |I'|$. Pick a permutation $\pi
\in S_k$ with $\pi(I) = I'$ and $\pi(D) = D'$. Symmetry of $\Phi^k$
sends $(\vecs_I, \sigma(\vecs_I)) \in K_{\Phi^k}$ to $\pi \cdot
(\vecs_I, \sigma(\vecs_I)) \in K_{\Phi^k}$, and the second
decomposition then forces $\sigma'(\pi \cdot \vecs_I) = \pi \cdot
\sigma(\vecs_I)$. By (1), both sides simplify: $\sigma$ is invariant
under input permutations and has all output components equal, so
$\sigma'(\vecs_I) = \sigma(\vecs_I)$ for every $\vecs_I$.
\end{proof}

\begin{proposition}
    \label{prop:modelfuncrep}
    Let $\IFam = (S, \{\Phi^k\}_{k \in \mathbb{Z}_{\geq 0}})$ be a functionally representable boolean interaction family and $\Mo(G=(V,E); \IFam)$ be a hypergraphical model induced by $\IFam$ on a loopless hypergraph $G$. Furthermore let $e \in E$ be a hyperedge. Then there is a partition $I \sqcup D = e$ such that
    \[
    K_{\phi^e} = \{ (\vecs_{V \backslash e}, \vecs_I, \sigma(\vecs_I)) \mid \vecs_{V \backslash e} \in S^{|V| - |e|},\vecs_I \in S^{|I|}\}.
    \]
\end{proposition}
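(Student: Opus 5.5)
Since $G$ is loopless, the hyperedge $e$ is an honest subset of $V$ of size $k := |e|$. We fix an enumeration $e = \{v_1, \dots, v_k\}$, which gives an identification of $[k]$ with $e$. By Definition~\ref{def:modinst}, $\phi^e(\vecs) = \Phi^{k}(\vecs|_e)$, where $\vecs|_e = (s_{v_1},\dots,s_{v_k})$. The argument then has three steps: transport the decomposition for $\Phi^k$ to $e$, pull the solution set back to $S^V$, and observe that the coordinates off $e$ are unconstrained.

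First, functional representability (Definition~\ref{def:funcrepr}) supplies a partition $[k] = I_0 \sqcup D_0$ and a map $\sigma\colon S^{|I_0|}\to S^{|D_0|}$ with $K_{\Phi^k} = \{(\vecs_{I_0}, \sigma(\vecs_{I_0}))\}$. We set $I = \{v_i : i\in I_0\}$ and $D = \{v_i : i \in D_0\}$, which gives a partition $I \sqcup D = e$. We regard $\sigma$ as a map $S^{I}\to S^{D}$ through the enumeration. By Proposition~\ref{prop:func_decomp_equiv}(1), $\sigma$ is symmetric in its inputs and has all output components equal. Hence this transport does not depend on the chosen enumeration, and neither the ordering inside $I$ nor inside $D$ matters.

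Second, we unfold the definition:
\[
K_{\phi^e} = \{\vecs \in S^V \mid \Phi^k(\vecs|_e) = 1\} = \{\vecs \in S^V \mid \vecs|_e \in K_{\Phi^k}\}.
\]
We write $S^V = S^{V\setminus e}\times S^{I}\times S^{D}$, so $\vecs = (\vecs_{V\setminus e}, \vecs_I, \vecs_D)$. The condition $\vecs|_e \in K_{\Phi^k}$ involves only $(\vecs_I,\vecs_D)$. By the representation of $K_{\Phi^k}$, it holds if and only if $\vecs_D = \sigma(\vecs_I)$. The component $\vecs_{V\setminus e}$ is therefore arbitrary, and this gives exactly the claimed description
\[
K_{\phi^e} = \{(\vecs_{V\setminus e}, \vecs_I, \sigma(\vecs_I))\}.
\]
We also note that the map $(\vecs_{V\setminus e},\vecs_I)\mapsto(\vecs_{V\setminus e},\vecs_I,\sigma(\vecs_I))$ is injective. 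This yields the bijection $S^{V\setminus D}\leftrightarrow K_{\phi^e}$ used later.

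The only delicate point is the first step, where the decomposition is transported to the vertices of $e$. This is where looplessness is used: for a multiset edge, repeated vertices would have to carry equal values, and the product decomposition of $S^V$ would fail. With $e$ a set, the coordinates are distinct, and the rest is direct unwinding of the definitions.
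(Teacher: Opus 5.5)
Your proof is correct and takes the same approach as the paper: you enumerate $e$, transport the partition $[k]=I_0\sqcup D_0$ and the dependency map $\sigma$ to $e=I\sqcup D$, use $\phi^e(\vecs)=\Phi^{k}(\vecs|_e)$ to reduce the condition to $\vecs_D=\sigma(\vecs_I)$, and note that the spins on $V\setminus e$ are unconstrained. Your appeal to Proposition~\ref{prop:func_decomp_equiv}(1) for independence of the enumeration is not needed for the existence claim, though it is correct and harmless, and your remark on where looplessness enters spells out what the paper leaves implicit.
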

\begin{proof}
    Since $|e| = k$, fix an enumeration of the vertices of $e$ and let $[k] = I' \sqcup D'$ be the partition from Definition~\ref{def:funcrepr} with dependency map $\sigma$. This enumeration identifies $I'$ and $D'$ with subsets $I \subseteq e$ and $D \subseteq e$, giving a partition $e = I \sqcup D$.

    By the locality condition of Definition~\ref{def:graphmod}, the interaction function $\phi^e(\vecs)$ depends only on $\vecs|_e$, and by Definition~\ref{def:modinst} we have $\phi^e(\vecs) = \Phi^{|e|}(\vecs|_e)$. Therefore $\phi^e(\vecs) = 1$ if and only if $\vecs|_e \in K_{\Phi^k}$, which by functional representability holds if and only if $\vecs_D = \sigma(\vecs_I)$. Since the spins on $V \setminus e$ are unconstrained by $\phi^e$, the result follows.
\end{proof}

\subsubsection{Contraction-closure}

\begin{definition}[Contraction-closed Interaction Family]
\label{def:contraction_closed}
A loopblind boolean interaction family $\IFam = (S, \{\Phi^k\}_{k \in \mathbb{Z}_{\geq 0}})$ that is functionally representable is \emph{contraction-closed} if, for every $k \in \mathbb{Z}_{\geq 0}$, every $k' \geq k$, and every decomposition $[k] = I \sqcup D$ with dependency map $\sigma \colon S^{|I|} \to S^{|D|}$ witnessing functional representability of $\Phi^k$, the following holds. Extend $\sigma$ to a map
\[
    \hat\sigma \colon S^{[k'] \setminus D} \longrightarrow S^{[k']},
    \qquad
    \hat\sigma(\vecs)\big|_{[k'] \setminus D} = \vecs,
    \qquad
    \hat\sigma(\vecs)\big|_{D} = \sigma(\vecs|_I).
\]
Then for every $A \subseteq [k']$, either $\Phi^{|A|}(\hat\sigma(\vecs)|_A)$ is constant in $\vecs$, or there is a unique $A' \subseteq [k'] \setminus D$ such that
\[
    \Phi^{|A|}\!\big(\hat\sigma(\vecs)|_A\big) \;=\; \Phi^{|A'|}\!\big(\vecs|_{A'}\big)
    \qquad \text{for all } \vecs \in S^{[k'] \setminus D}.
\]
\end{definition}

\begin{example}[Examples of Contraction-closure]
\label{ex:contraction_closure}
We illustrate contraction-closure with a positive and a negative example.

\textbf{Positive example.} The boolean Parity Ising interaction family $\mcib$ is con\-traction-closed. Consider $k = 3$, $k' = 4$, with $I = \{1,2\}$, $D = \{3\}$, and dependency map $\sigma(s_1, s_2) \equiv s_1 + s_2 \mod 2$. Then the extended map is
\[
    \hat\sigma(s_1, s_2, s_4) = (s_1,\, s_2,\, s_1 + s_2,\, s_4) \mod 2.
\]
Consider $A = \{1, 3, 4\}$. Substituting gives
\[
    \PhibIs^{3}\!\big(\hat\sigma(s_1, s_2, s_4)|_A\big) = \PhibIs^{3}(s_1,\, s_1 + s_2,\, s_4) = 1 \iff s_2 + s_4 \equiv 0 \mod 2,
\]
so $A' = \{2, 4\}$. See Proposition~\ref{prop:mcib_properties} for the general argument.

\textbf{Negative example.} Define a boolean interaction family $\IFam^{\mathrm{mix}} = (S, \{\Phi^k_{\mathrm{mix}}\}_{k \in \mathbb{Z}_{\geq 0}})$ with $S = \{0,1\}$ by setting $\Phi^1_{\mathrm{mix}}(s) = s$ and $\Phi^k_{\mathrm{mix}}(\vecs) = 1$ iff $\sum_{i=1}^{k} s_i \equiv 0 \pmod 2$ for $k \geq 2$. In words, $\Phi^1_{\mathrm{mix}}$ checks whether a spin equals~$1$, while $\Phi^{k \geq 2}_{\mathrm{mix}}$ checks even parity.

This family is functionally representable for every $k$: for $k = 1$, we have $K_{\Phi^1_{\mathrm{mix}}} = \{1\}$ with $I = \emptyset$, $D = \{1\}$, and $\sigma(\emptyset) = 1$; for $k \geq 2$, the even-parity constraint gives $|K_{\Phi^k_{\mathrm{mix}}}| = 2^{k-1}$ with $I = [k-1]$, $D = \{k\}$, and $\sigma(\vecs_I) \equiv \sum_{i=1}^{k-1} s_i \pmod 2$.

However, contraction-closure fails already at $k = k' = 3$. Take $I = \{1,2\}$, $D = \{3\}$, $\sigma(s_1, s_2) \equiv s_1 + s_2 \pmod 2$, so that $\hat\sigma(s_1, s_2) = (s_1, s_2, s_1 + s_2)$, and $A = \{3\}$. Then
\[
    \Phi^1_{\mathrm{mix}}\!\big(\hat\sigma(s_1, s_2)|_A\big) \equiv s_1 + s_2 \mod 2,
\]
which equals $1$ iff $s_1 \neq s_2$. For contraction-closure this must equal $\Phi^{|A'|}_{\mathrm{mix}}(\cdot|_{A'})$ for some $A' \subseteq [k'] \setminus D = I$. But $A' = \{1,2\}$ gives $\Phi^2_{\mathrm{mix}}(s_1, s_2) = 1$ iff $s_1 = s_2$ (the complement); $A' = \{1\}$ or $A' = \{2\}$ gives $\Phi^1_{\mathrm{mix}}(s_i) = s_i$; and $A' = \emptyset$ is constant. None match, so no valid $A'$ exists.

Intuitively, the failure arises because $\Phi^1_{\mathrm{mix}}$ and $\Phi^{k \geq 2}_{\mathrm{mix}}$ encode different types of constraints: substituting the even-parity dependency map into $\Phi^1_{\mathrm{mix}}$ produces an odd-parity function, which does not belong to the family.
\end{example}

\begin{proposition}
\label{prop:contraction_closed_uniqueness}
In Definition~\ref{def:contraction_closed}, uniqueness of $A'$ is automatic: if there exists any subset $A' \subseteq [k'] \setminus D$ with $\Phi^{|A|}(\hat\sigma(\vecs)|_A) = \Phi^{|A'|}(\vecs|_{A'})$ for all $\vecs \in S^{[k'] \setminus D}$ and this function is not constant, then $A'$ is unique.
\end{proposition}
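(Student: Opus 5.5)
The argument mirrors the proof of Proposition~\ref{prop:loopless_uniqueness}, adapted to the substituted function. Write $F(\vecs) := \Phi^{|A|}(\hat\sigma(\vecs)|_A)$ for $\vecs \in S^{[k'] \setminus D}$, and assume $F$ is not constant. Suppose $A'$ and $\widetilde{A}$ are two subsets of $[k'] \setminus D$ with
\[
F(\vecs) = \Phi^{|A'|}(\vecs|_{A'}) = \Phi^{|\widetilde{A}|}(\vecs|_{\widetilde{A}}) \quad \text{for all } \vecs.
\]
We want to show $A' = \widetilde{A}$. Assume not, and without loss of generality pick an index $i \in A' \setminus \widetilde{A}$ (otherwise swap the roles of $A'$ and $\widetilde{A}$).

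The right-hand expression $\Phi^{|\widetilde{A}|}(\vecs|_{\widetilde{A}})$ does not involve the coordinate $s_i$. Hence the function $\vecs|_{A'} \mapsto \Phi^{|A'|}(\vecs|_{A'})$ is independent of its argument in position $i$. Here we use that the coordinates of $\vecs \in S^{[k'] \setminus D}$ are free and $A'$ is a genuine subset, so the coordinates indexed by $A'$ can be varied independently of one another. This is the step needing care: we must make sure that independence in the composite variable $\vecs$ really translates into independence of $\Phi^{|A'|}$ as a function on $S^{|A'|}$. That holds because the restriction map $S^{[k'] \setminus D} \to S^{A'}$ is surjective, with each coordinate in $A'$ ranging freely.

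Next use the symmetry of $\Phi^{|A'|}$ from Definition~\ref{def:family} and Remark~\ref{rem:symmetry}. Since $\Phi^{|A'|}$ is invariant under permutations of its arguments, independence of one argument implies independence of every argument. Indeed, for any $j \in A'$, transposing positions $i$ and $j$ converts a change in slot $j$ into a change in slot $i$, which does not alter the value. Changing the arguments one coordinate at a time then connects any two inputs, so $\Phi^{|A'|}$ is constant on $S^{|A'|}$.

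Consequently $F = \Phi^{|A'|}(\cdot|_{A'})$ is constant in $\vecs$, contradicting our assumption. Therefore $A' = \widetilde{A}$, and $A'$ is unique.
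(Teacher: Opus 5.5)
Your proposal is correct and follows the paper's argument. An index in $A' \setminus \widetilde{A}$ makes $\Phi^{|A'|}$ independent of one slot, and permutation symmetry spreads this to every slot, forcing $\Phi^{|A'|}(\cdot|_{A'})$ to be constant. Along the way you fill in details the paper leaves implicit: the swap of roles when $A' \setminus \widetilde{A} = \emptyset$, surjectivity of the restriction $S^{[k'] \setminus D} \to S^{A'}$, and the transposition argument.
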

\begin{proof}
The proof follows the structure of Proposition~\ref{prop:loopless_uniqueness}: if two different subsets $A', \widetilde{A} \subseteq [k'] \setminus D$ both satisfy the condition, then there exists an index in $A' \setminus \widetilde{A}$, and symmetry of $\Phi^{|A'|}$ forces it to be constant, a contradiction.
\end{proof}

\begin{proposition}[Independence of Decomposition]\label{prop:independence_decomposition}
Contraction-closedness of $\IFam$ is independent of the choice of functionally representable decomposition: if the condition of Definition~\ref{def:contraction_closed} holds for one decomposition $[k] = I \sqcup D$ with dependency map $\sigma$, it holds for any other valid decomposition $[k] = I' \sqcup D'$ with dependency map $\sigma'$.
\end{proposition}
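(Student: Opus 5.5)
The plan is to transport the contraction-closedness condition along a permutation relating the two decompositions. Proposition~\ref{prop:func_decomp_equiv} does most of the bookkeeping.

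Fix $k$, $k' \geq k$, and two decompositions $[k] = I \sqcup D$ with map $\sigma$ and $[k] = I' \sqcup D'$ with map $\sigma'$. Assume the condition of Definition~\ref{def:contraction_closed} holds for $(I,D,\sigma)$.

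\textbf{Step 1: a permutation relating the decompositions.} By Proposition~\ref{prop:func_decomp_equiv}(2) we have $|I| = |I'|$, hence also $|D| = |D'|$. Choose a permutation $\pi \in S_{k'}$ with $\pi(I) = I'$ and $\pi(D) = D'$, and with $\pi$ fixing every point of $[k'] \setminus [k]$. By Proposition~\ref{prop:func_decomp_equiv}(1)--(2), $\sigma$ is invariant under permutations of its inputs and has all output components equal. Moreover $\sigma'$ coincides with $\sigma$ under the identification $\pi|_I \colon I \to I'$.

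\textbf{Step 2: compatibility of the two extensions.} Write $\pi$ also for the induced coordinate relabellings $S^{[k']} \to S^{[k']}$ and $S^{[k']\setminus D} \to S^{[k'] \setminus D'}$. The claim to verify is
\[
\hat\sigma'(\pi\cdot\vecs) = \pi\cdot\hat\sigma(\vecs) \quad \text{for all } \vecs \in S^{[k']\setminus D}.
\]
On the coordinates $[k'] \setminus D'$ both sides are just the relabelled $\vecs$. On $D'$, the left side equals $\sigma'((\pi\cdot\vecs)|_{I'})$. The right side equals the relabelling of $\sigma(\vecs|_I)$, which is constant across components. Step 1 says these agree.

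\textbf{Step 3: transfer the condition.} Let $A \subseteq [k']$ and set $B = \pi^{-1}(A)$, so $|B| = |A|$. Then
\[
\Phi^{|A|}\bigl(\hat\sigma'(\vecs')|_A\bigr) = \Phi^{|B|}\bigl(\hat\sigma(\pi^{-1}\cdot\vecs')|_B\bigr).
\]
This uses Step 2 together with the symmetry of $\Phi^{|A|}$; the symmetry is needed because restricting to $A$ versus $B$ may order the coordinates differently.

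Apply the hypothesis for $(I,D,\sigma)$ to $B$. Either the function is constant, and then so is the transported one. Or there is $B' \subseteq [k'] \setminus D$ with $\Phi^{|B|}(\hat\sigma(\vecs)|_B) = \Phi^{|B'|}(\vecs|_{B'})$. In the second case set $A' = \pi(B') \subseteq [k'] \setminus D'$. Symmetry again gives $\Phi^{|A|}(\hat\sigma'(\vecs')|_A) = \Phi^{|A'|}(\vecs'|_{A'})$. Uniqueness of $A'$ follows from Proposition~\ref{prop:contraction_closed_uniqueness}.

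\textbf{Main obstacle.} The difficult step is Step 2. Its correctness depends entirely on Proposition~\ref{prop:func_decomp_equiv}, namely that $\sigma$ is symmetric in its inputs and constant across its outputs. Without this, relabelling $I$ to $I'$ could scramble which dependent coordinate receives which value. The rest is careful index bookkeeping, and the invariance of each $\Phi^m$ under coordinate permutations (Definition~\ref{def:family}) absorbs any ordering ambiguity in the restrictions.
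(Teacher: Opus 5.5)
Your proposal is correct and follows the same route as the paper. You choose $\pi$ with $\pi(I)=I'$ and $\pi(D)=D'$, fixing $[k']\setminus[k]$; you use Proposition~\ref{prop:func_decomp_equiv} to identify $\sigma$ with $\sigma'$; you apply the hypothesis to $\pi^{-1}(A)$; and you push the witness forward as $A'=\pi(B)$. Your write-up is more explicit than the paper's, since it verifies $\hat\sigma'(\pi\cdot\vecs)=\pi\cdot\hat\sigma(\vecs)$ and treats the constant case, which the paper skips. That compatibility needs only $\pi\cdot K_{\Phi^k}=K_{\Phi^k}$ and the fact that $K_{\Phi^k}$ is the graph of $\sigma'$ over $I'$, so the step you flag as the main obstacle is less delicate than you suggest.
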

\begin{proof}
By Proposition~\ref{prop:func_decomp_equiv}, $|I| = |I'|$ and $\sigma = \sigma'$. Let $\pi \in S_k$ be a permutation with $\pi(I) = I'$ and $\pi(D) = D'$, extended by the identity on $[k'] \setminus [k]$ to a permutation of $[k']$ (still denoted $\pi$). Write $\hat\sigma$ and $\hat\sigma'$ for the extensions associated to the two decompositions.

Take any $A \subseteq [k']$. By contraction-closure with respect to the first decomposition applied to $\pi^{-1}(A)$, there is $B \subseteq [k'] \setminus D$ with
\[
    \Phi^{|A|}\!\big(\hat\sigma(\vecs)|_{\pi^{-1}(A)}\big) = \Phi^{|B|}(\vecs|_B)
    \qquad \text{for all } \vecs \in S^{[k'] \setminus D}.
\]
Since $\sigma = \sigma'$, $\Phi^{|A|}$ is symmetric, and $\pi$ is the identity on $[k'] \setminus [k]$, the left-hand side equals $\Phi^{|A|}(\hat\sigma'(\pi \cdot \vecs)|_A)$ after relabeling. Setting $A' := \pi(B) \subseteq [k'] \setminus D'$ gives
\[
    \Phi^{|A|}\!\big(\hat\sigma'(\vecs')|_A\big) = \Phi^{|A'|}(\vecs'|_{A'})
\]
for all $\vecs' \in S^{[k'] \setminus D'}$. Since $A \subseteq [k']$ was arbitrary, the condition holds for the second decomposition.
\end{proof}

\begin{definition}[Restricted Interaction Function]
\label{def:restricted_interaction}
Let $\IFam$ be a loopblind, functionally representable boolean interaction family, let $G = (V,E)$ be a loopless hypergraph, and let $e \in E$ have decomposition $e = I \sqcup D$ with dependency map $\sigma$. Extend $\sigma$ to a map
\[
    \hat\sigma \colon S^{V \setminus D} \longrightarrow S^{V},
    \qquad
    \hat\sigma(\vecs)\big|_{V \setminus D} = \vecs,
    \qquad
    \hat\sigma(\vecs)\big|_{D} = \sigma(\vecs|_I).
\]
For each $f \in E \setminus \{e\}$, the \emph{restricted interaction function} is
\[
    \widetilde\phi^f \colon S^{(f \cup I) \setminus D} \longrightarrow \{0,1\},
    \qquad
    \widetilde\phi^f(\vecs) \;:=\; \phi^f\!\big(\hat\sigma(\vecs)|_f\big),
\]
where, since $\phi^f$ depends only on the coordinates in $f$, and $\hat\sigma(\vecs)|_f$ is determined by $\vecs|_{(f \cup I) \setminus D}$, the right-hand side is well defined.
\end{definition}

\begin{proposition}[Restricted interactions lie in $\IFam$]
\label{prop:restricted_in_class}
Let $\IFam$ be a loopblind, functionally representable, contraction-closed boolean interaction family, let $G = (V,E)$ be a loopless hypergraph, let $e \in E$ have decomposition $e = I \sqcup D$ with dependency map $\sigma$, and let $f \in E \setminus \{e\}$. Then $\widetilde\phi^f$ is either constant, or there is a unique subset $f^* \subseteq (f \cup I) \setminus D$ such that
\[
    \widetilde\phi^f(\vecs) \;=\; \Phi^{|f^*|}(\vecs|_{f^*})
    \qquad \text{for all } \vecs \in S^{(f \cup I) \setminus D}.
\]
\end{proposition}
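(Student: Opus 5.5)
The plan is to reduce the statement to Definition~\ref{def:contraction_closed} by relabeling the finitely many vertices involved. Set $W := f \cup e$, which is a genuine set because $G$ is loopless. Put $k := |e|$ and $k' := |W| \geq k$. Choose a bijection $\lambda \colon W \to [k']$ sending $I$ onto the first $|I|$ labels and $D$ onto the next $|D|$ labels. Then $\lambda(e) = [k]$ and $\lambda(I) \sqcup \lambda(D) = [k]$. By Proposition~\ref{prop:modelfuncrep} and its proof, the pulled-back map $\sigma$ is a dependency map witnessing functional representability of $\Phi^k$ for this partition. Proposition~\ref{prop:independence_decomposition} (together with Proposition~\ref{prop:func_decomp_equiv}) guarantees that contraction-closedness holds for this particular decomposition, whichever one the definition started with.

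Next, observe that $\widetilde\phi^f$ is determined by the coordinates on $W \setminus D = (f \cup I)\setminus D \cup (e\setminus f \cap I)$. More carefully, $\hat\sigma(\vecs)|_f$ depends only on $\vecs|_{f\setminus D}$ and $\vecs|_I$. I will therefore work with the local extension $\hat\sigma_{\mathrm{loc}} \colon S^{W\setminus D} \to S^W$, which under $\lambda$ is exactly the map $\hat\sigma$ of Definition~\ref{def:contraction_closed}. With $A := \lambda(f) \subseteq [k']$, we get $\widetilde\phi^f(\vecs) = \Phi^{|f|}(\hat\sigma_{\mathrm{loc}}(\vecs)|_f)$, which corresponds to $\Phi^{|A|}(\hat\sigma(\vecs')|_A)$ with $\vecs' = \vecs\circ\lambda^{-1}$. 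Contraction-closedness now gives two cases. In the first, this function is constant, so $\widetilde\phi^f$ is constant. In the second, there is a unique $A' \subseteq [k']\setminus \lambda(D)$ with $\Phi^{|A|}(\hat\sigma(\vecs')|_A) = \Phi^{|A'|}(\vecs'|_{A'})$. In that case define $f^* := \lambda^{-1}(A') \subseteq W\setminus D$. Symmetry of the $\Phi$'s (Definition~\ref{def:family}) makes the ordering chosen by $\lambda$ irrelevant.

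The main obstacle is making sure $f^*$ actually lies in $(f\cup I)\setminus D$ and not merely in $W \setminus D$. The set $W\setminus D$ may also contain vertices of $I$; since $e = I \sqcup D$, every vertex of $W\setminus D$ is in $f \cup I$, so in fact $W\setminus D = (f\cup I)\setminus D$ and the issue dissolves. A related point is that the domain of $\widetilde\phi^f$ was declared to be $S^{(f\cup I)\setminus D}$, and this matches $W \setminus D$ exactly.

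Uniqueness of $f^*$ follows from Proposition~\ref{prop:contraction_closed_uniqueness}, transported through $\lambda$, in the nonconstant case. Equivalently, one can repeat the symmetry argument of Proposition~\ref{prop:loopless_uniqueness}: if two subsets worked, $\widetilde\phi^f$ would be independent of some coordinate of a symmetric function, and hence constant.
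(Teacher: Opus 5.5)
Your proposal is correct and is essentially the paper's proof: enumerate $f \cup e$ as $[k']$ with $e$ going to $[k]$ (respecting $I \sqcup D$), recognise $\widetilde\phi^f$ as the function $\Phi^{|A|}(\hat\sigma(\cdot)|_A)$ of Definition~\ref{def:contraction_closed} with $A$ the image of $f$, and pull back either the constant case or the unique $A'$, using $(f\cup e)\setminus D = (f \cup I)\setminus D$. Your appeal to Proposition~\ref{prop:independence_decomposition} is harmless but not needed, since Definition~\ref{def:contraction_closed} already quantifies over every witnessing decomposition.
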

\begin{proof}
Set $k := |e|$ and $k' := |f \cup e|$, and fix an enumeration $[k'] \leftrightarrow f \cup e$ under which $e$ corresponds to $[k]$, the decomposition $I \sqcup D$ of $e$ corresponds to $[k] = I \sqcup D$ in Definition~\ref{def:contraction_closed}, and $f \setminus e$ corresponds to $[k'] \setminus [k]$. Under this identification, $f$ is a subset $A \subseteq [k']$ and $\widetilde\phi^f(\vecs) = \Phi^{|A|}(\hat\sigma(\vecs)|_A)$ is exactly the function appearing in Definition~\ref{def:contraction_closed}. Contraction-closedness then gives either constancy or a unique $A' \subseteq [k'] \setminus D$; taking $f^* \subseteq (f \cup I) \setminus D$ to be the image of $A'$ yields the claim.
\end{proof}
\begin{remark}[Role of loopblindness in uniqueness of $f^*$]
\label{rem:loopblind_uniqueness}
The loopblindness is necessary for the uniqueness in Proposition~\ref{prop:restricted_in_class}: for a hypergraph $G$ that is not loopless the uniqueness argument of Proposition~\ref{prop:contraction_closed_uniqueness} does not apply, and two distinct multisets $A_1, A_2$ could both reduce $\widetilde\phi^f$ to a valid expression in $\IFam$ without either being preferable.
\end{remark}

\begin{definition}[Hypergraph Deletion and Contraction with respect to a Loopblind Contraction--closed Interaction Family]
\label{def:induced_hyper_contraction}
Let $G=(V,E)$ be a loopless hypergraph, $\IFam$ a loopblind contraction-closed boolean interaction family, and $e \in E$ a hyperedge with decomposition $e = I \sqcup D$ and dependency map $\sigma$.

The \emph{deletion of $e$ from $G$} is the hypergraph $G \setminus e := (V, E \setminus \{e\})$.

The \emph{contraction of $e$ from $G$ with respect to $\IFam$ and $\sigma$}, denoted $G /_{\IFam, \sigma} e$, is the hypergraph $(V \setminus D,\, E')$ where $E' = \{ f^* \mid f \in E \setminus \{e\} \}$ is the multiset of \emph{contracted hyperedges} given, for each $f \in E \setminus \{e\}$, by Proposition~\ref{prop:restricted_in_class} applied to $\widetilde\phi^f$:
\begin{itemize}
    \item if $\widetilde\phi^f$ is constant, set $f^* := \emptyset$;
    \item otherwise, $f^*$ is the unique subset $f^* \subseteq (f \cup I) \setminus D$ with $\widetilde\phi^f = \Phi^{|f^*|}(\cdot|_{f^*})$.
\end{itemize}
\end{definition}
\begin{definition}[Model Deletion and Contraction]
\label{def:model_delcon}
Let $\Mo$ be the hypergraphical model induced by a loopblind interaction family $\IFam$ on a loopless hypergraph $G =(V,E)$ and let $e \in E$. We introduce the following shorthand notation:

The \emph{deletion} of $e$ from $\Mo$, denoted $\Mo \setminus e$, is the model induced by $\IFam$ on $G \setminus e$:
\[ \Mo \setminus e := \Mo(G \setminus e; \IFam). \]

If $\IFam$ is contraction-closed and $e$ has a dependency map $\sigma$, the \emph{contraction} of $e$ from $\Mo$, denoted $\Mo /_{\IFam,\sigma} e$, is the model induced by $\IFam$ on the contracted hypergraph (Definition \ref{def:induced_hyper_contraction}):
\[ \Mo /_{\IFam,\sigma} e := \Mo(G/_{\IFam, \sigma} e; \IFam). \]
\end{definition}

\begin{theorem}[Deletion--Contraction Recurrence for Contraction-closed Families]
\label{theo:contraction_closure}
Let $\IFam$ be a loopblind boolean interaction family that is functionally representable and contraction-closed. Let $\Mo = \Mo(G; \IFam)$ be the hypergraphical model induced by $\IFam$ on a loopless hypergraph $G = (V,E)$, and let $e \in E$ with decomposition $e = I \sqcup D$ and dependency map $\sigma$. Then, for all $A \subseteq E \setminus \{e\}$,
\begin{align}
    r_{\Mo}/e(A) &= |D| + r_{\Mo /_{\IFam,\sigma} e}(A), \\
    r_{\Mo} \setminus e(A) &= r_{\Mo \setminus e}(A).
\end{align}
\end{theorem}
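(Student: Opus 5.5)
The deletion identity is immediate. The edge set of $G\setminus e$ is $E\setminus\{e\}$ and its vertex set is still $V$. For $f\neq e$ the interaction functions of $\Mo\setminus e$ are $\Phi^{|f|}(\vecs|_f)$, exactly as in $\Mo$. Hence $R_{\Mo\setminus e}(A)=R_\Mo(A)$ for every $A\subseteq E\setminus\{e\}$. Since $|V|$ is unchanged, the ranks agree, including the case $R=0$, where both are $\infty$.

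For the contraction identity I would show the counting identity
\[
R_\Mo(A\cup\{e\}) = R_{\Mo/_{\IFam,\sigma}e}(A),
\]
following the three steps sketched before Definition~\ref{def:funcrepr}. First, isolate the $e$-constraint: $R_\Mo(A\cup\{e\})=\#(K_{\phi^e}\cap K_A)$. By Proposition~\ref{prop:modelfuncrep}, $\hat\sigma\colon S^{V\setminus D}\to K_{\phi^e}$ is a bijection. It is injective because it restricts to the identity on $V\setminus D$, and it is surjective because every element of $K_{\phi^e}$ has $D$-coordinates equal to $\sigma$ of its $I$-coordinates. Therefore
\[
R_\Mo(A\cup\{e\})=\#\{\vecs\in S^{V\setminus D}\mid \widetilde\phi^f(\vecs)=1\ \forall f\in A\}.
\]
Second, apply Proposition~\ref{prop:restricted_in_class} to each $f\in A$. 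If $\widetilde\phi^f$ is nonconstant, then $\widetilde\phi^f(\vecs)=\Phi^{|f^*|}(\vecs|_{f^*})$, and this is precisely the interaction of $f^*$ in $\Mo(G/_{\IFam,\sigma}e;\IFam)$ on vertex set $V\setminus D$.

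The constant case needs care, and I expect it to be the main technical point. If $\widetilde\phi^f\equiv 0$, then both sides should vanish. If $\widetilde\phi^f\equiv 1$, the constraint is vacuous. Definition~\ref{def:induced_hyper_contraction} assigns $f^*=\emptyset$ in both cases, so the interaction in the contracted model is the constant $\Phi^0$. I would check that $\Phi^0$ matches the constant in question. Where it does not (for instance $\widetilde\phi^f\equiv0$ but $\Phi^0=1$), I would argue that the identity should be read with the convention that an empty contracted hyperedge carries the constant value of $\widetilde\phi^f$. Equivalently, for the families of interest I would verify that the case $\widetilde\phi^f\equiv 0$ cannot occur when $K_{\phi^e}\neq\emptyset$; at minimum I would flag this convention explicitly. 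With it in place, the two solution sets coincide, which gives the counting identity.

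Finally, I would convert the counting identity into ranks. The contracted model has $|V|-|D|$ vertices. If the common count is $0$, both ranks are $\infty$ and the identity holds with $|D|+\infty=\infty$. Otherwise
\[
r_\Mo(A\cup\{e\}) = |V|-\log_q R = |D| + \bigl(|V\setminus D| - \log_q R\bigr) = |D| + r_{\Mo/_{\IFam,\sigma}e}(A).
\]
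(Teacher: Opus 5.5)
Your argument follows the paper's proof step for step. The paper uses the same bijection $\hat\sigma\colon S^{V\setminus D}\to K_{\phi^e}$ from Proposition~\ref{prop:modelfuncrep}, the same rewriting of the remaining constraints as restricted interactions $\widetilde\phi^f$, the same appeal to Proposition~\ref{prop:restricted_in_class} to match these with the edges $f^*$ of $\Mo/_{\IFam,\sigma}e$, and the same conversion of $R_\Mo(A\cup\{e\})=R_{\Mo/_{\IFam,\sigma}e}(A)$ into ranks via $|V|=|V\setminus D|+|D|$. The deletion part is identical. The gap is the constant case, which you leave open, and neither fallback you offer proves the theorem as stated. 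By Definition~\ref{def:modinst}, an empty edge of $\Mo(G/_{\IFam,\sigma}e;\IFam)$ carries $\Phi^0$. Letting it carry ``the constant value of $\widetilde\phi^f$'' therefore changes the contracted model, and so changes the statement. Checking only the three example families does not give the general result either. The paper's proof does not address this point either: it simply asserts that a constant $\widetilde\phi^f$ is constantly $1$ and that the empty edge contributes a factor $1$.

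No new convention is needed, because the hypotheses rule out both bad cases.
\begin{itemize}
\item Functional representability gives $|K_{\Phi^k}|=q^{|I|}\ge 1$ for every $k$. At $k=0$ this forces $\Phi^0=1$. More generally, any constant $\Phi^k$ must be $\equiv 1$.
\item Apply loopblindness with $k=1$ to the multiset consisting of $m$ copies of $1$. If $\Phi^m$ is nonconstant, then for all $s\in S$, $\Phi^m(s,\dots,s)$ equals either $\Phi^0=1$ or $\Phi^1(s)$.
\item Pick $s_0\in K_{\Phi^1}$, which is nonempty by functional representability. Then $\Phi^m(s_0,\dots,s_0)=1$ for every $m\ge 0$.
\end{itemize}
Hence the constant configuration $\vecs_0=(s_0)_{v\in V}$ satisfies every hyperedge of $G$. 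In particular $\vecs_0\in K_{\phi^e}$, so $\hat\sigma(\vecs_0|_{V\setminus D})=\vecs_0$. It follows that $\widetilde\phi^f(\vecs_0|_{(f\cup I)\setminus D})=\phi^f(\vecs_0)=1$. So $\widetilde\phi^f\equiv 0$ never occurs, and every constant $\widetilde\phi^f$ is $\equiv 1$, matching $\Phi^0=1$. With this observation your proof goes through exactly as written, and the $R=0$ branch of your rank conversion is never used. Incidentally, this shows that loopblindness together with functional representability already implies global satisfiability of the family.
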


\begin{proof}
The deletion identity is immediate from the definitions of rank deletion and model deletion. We focus on the contraction identity.

Set $V' := V \setminus D$ and $d := |D|$, and recall that rank contraction is defined by $r_{\Mo \, / \, e}(A) = r_{\Mo}(A \cup \{e\})$, which is determined by the counting function $R_{\Mo}(A \cup \{e\})$. Let $\hat\sigma \colon S^{V'} \to S^{V}$ be the extension of $\sigma$ from Definition~\ref{def:restricted_interaction}. By Proposition~\ref{prop:modelfuncrep}, a configuration $\vecs \in S^{V}$ satisfies $\phi^e(\vecs) = 1$ if and only if $\vecs = \hat\sigma(\vecs')$ for some $\vecs' \in S^{V'}$, and this correspondence is a bijection between $K_{\phi^e}$ and $S^{V'}$. Therefore,
\begin{align*}
    R_{\Mo}(A \cup \{e\})
    &= \sum_{\vecs \in S^{V}} \delta\big(\phi^e(\vecs),\, 1\big) \prod_{f \in A} \delta\big(\phi^f(\vecs),\, 1\big) \\
    &= \sum_{\vecs' \in S^{V'}} \prod_{f \in A} \delta\big(\phi^f(\hat\sigma(\vecs')),\, 1\big) \\
    &= \sum_{\vecs' \in S^{V'}} \prod_{f \in A} \delta\big(\widetilde\phi^f(\vecs'|_{(f \cup I) \setminus D}),\, 1\big),
\end{align*}
where the final equality uses Definition~\ref{def:restricted_interaction}. By Proposition~\ref{prop:restricted_in_class}, each $\widetilde\phi^f$ is either constantly $1$ (in which case $f^* = \emptyset$ and the corresponding factor is $1$) or equals $\Phi^{|f^*|}(\cdot|_{f^*})$ for a unique $f^* \subseteq V'$; in either case the factor equals $\psi^{f^*}(\vecs')$, the interaction function of the edge $f^*$ in the contracted model $\Mo /_{\IFam,\sigma} e$. Hence
\[
    R_{\Mo}(A \cup \{e\})
    = \sum_{\vecs' \in S^{V'}} \prod_{f^* \in A^*} \delta\big(\psi^{f^*}(\vecs'),\, 1\big)
    = R_{\Mo /_{\IFam,\sigma} e}(A),
\]
where $A^* := \{ f^* \mid f \in A \}$. Converting to rank functions using $|V| = |V'| + d$,
\[
    r_{\Mo \, / \, e}(A)
    = |V| - \log_q R_{\Mo}(A \cup \{e\})
    = (|V'| + d) - \log_q R_{\Mo /_{\IFam,\sigma} e}(A)
    = d + r_{\Mo /_{\IFam,\sigma} e}(A). \qedhere
\]
\end{proof}
\begin{corollary}[Partition Function Recurrence]
\label{cor:partition_delcon}
    Under the assumptions of Theorem \ref{theo:contraction_closure}, the partition function satisfies the following deletion-contraction recurrence:
    \[
    Z(\Mo; \vecg) = Z(\Mo \backslash e; \vecg') + v_e Z(\Mo/_{\IFam,\sigma}e; \vecg'),
    \]
    where $\vecg' = (g_f)_{f \in E \setminus \{e\}}$.
\end{corollary}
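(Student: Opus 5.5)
The plan is to combine Proposition~\ref{prop:boolmodcountex}, which expresses each partition function through a rank generating function, with the abstract recurrence of Theorem~\ref{theo:rankdelcon}. Then Theorem~\ref{theo:contraction_closure} identifies the deleted and contracted rank functions with the rank functions of the models $\Mo\setminus e$ and $\Mo/_{\IFam,\sigma}e$. The main work is tracking the powers of $q$ coming from the vertex counts.

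First I would write $Z(\Mo;\vecg) = q^{|V|}\widetilde Z(q,\vecv;r_\Mo)$ with $v_e = e^{g_e}-1$. Applying Theorem~\ref{theo:rankdelcon} to $r=r_\Mo$ splits this as
\[
Z(\Mo;\vecg) = q^{|V|}\widetilde Z(q,\vecv';r_\Mo\setminus e) + v_e\, q^{|V|}\widetilde Z(q,\vecv';r_\Mo/e).
\]
The theorem is stated for real-valued $r$, but its proof is a pure splitting of the subset sum. It therefore applies verbatim with the convention $q^{-\infty}=0$.

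For the deletion term, Theorem~\ref{theo:contraction_closure} gives $r_\Mo\setminus e = r_{\Mo\setminus e}$. Since $\Mo\setminus e$ has the same vertex set $V$, Proposition~\ref{prop:boolmodcountex} turns the first summand into $Z(\Mo\setminus e;\vecg')$.

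For the contraction term, Theorem~\ref{theo:contraction_closure} gives $r_\Mo/e(A) = |D| + r_{\Mo/_{\IFam,\sigma}e}(A)$. This holds also when either side is infinite, because both sides are then governed by the same counting identity $R_\Mo(A\cup\{e\}) = R_{\Mo/_{\IFam,\sigma}e}(A)$. Hence
\[
q^{-r_\Mo/e(A)} = q^{-|D|}\,q^{-r_{\Mo/_{\IFam,\sigma}e}(A)}.
\]
The contracted model lives on the vertex set $V\setminus D$, which has $|V|-|D|$ vertices. So $q^{|V|}q^{-|D|} = q^{|V\setminus D|}$, and Proposition~\ref{prop:boolmodcountex} turns the second summand into $v_e Z(\Mo/_{\IFam,\sigma}e;\vecg')$.

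The only subtle point is indexing. The contracted model's edge multiset is $\{f^*\}$, indexed by $f\in E\setminus\{e\}$, so its parameters should be $g_{f^*}:=g_f$. Under this indexing $\vecg'$ is the right parameter vector.

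A second check concerns the edges with $f^*=\emptyset$. Such an edge arises when $\widetilde\phi^f$ is constant, and that constant could be $0$ rather than $1$. In that case the edge would have to contribute $\delta=0$, while an empty edge of the family contributes $\Phi^0$. To handle this I would work directly with the identity $R_\Mo(A\cup\{e\}) = R_{\Mo/_{\IFam,\sigma}e}(A)$ from the proof of Theorem~\ref{theo:contraction_closure}. That identity is exactly what the recurrence requires, so this case needs no separate treatment beyond citing it.
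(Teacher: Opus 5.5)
Your proposal is correct and is essentially the paper's proof: expand with Proposition~\ref{prop:boolmodcountex}, split with Theorem~\ref{theo:rankdelcon}, substitute both identities of Theorem~\ref{theo:contraction_closure}, and absorb $q^{|V|}q^{-|D|}=q^{|V\setminus D|}$ into the prefactor of the contracted model. One caveat on your last paragraph: citing $R_\Mo(A\cup\{e\})=R_{\Mo/_{\IFam,\sigma}e}(A)$ does not actually settle the constant-$0$ case, because that identity is exactly what would break if $\widetilde\phi^f\equiv 0$ while $\Phi^0\equiv 1$; this concern belongs to Theorem~\ref{theo:contraction_closure} itself (whose proof only treats $\widetilde\phi^f\equiv 1$), and the corollary, like your derivation, takes that theorem as given.
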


\begin{proof}
    We use the relation $Z(\Mo; \vecg) = q^{|V|} \widetilde{Z}(q, \vecv; r_{\Mo})$ from Proposition \ref{prop:boolmodcountex} and the rank recurrence from Theorem \ref{theo:rankdelcon}:
    \begin{align*}
        Z(\Mo; \vecg) &= q^{|V|} \left( \widetilde{Z}(q, \vecv'; r_{\Mo}\setminus e) + v_e \widetilde{Z}(q, \vecv'; r_{\Mo}/ e) \right) \\
        &= q^{|V|} \widetilde{Z}(q, \vecv'; r_{\Mo \setminus e}) + v_e q^{|V|} \sum_{A \subseteq E \setminus \{e\}} q^{-(d + r_{\Mo/_{\IFam, \sigma}e}(A))} \prod_{f \in A} v_f \\
        &= Z(\Mo \setminus e; \vecg') + v_e q^{|V|-d} \sum_{A \subseteq E \setminus \{e\}} q^{-r_{\Mo/_{\IFam, \sigma}e}(A)} \prod_{f \in A} v_f \\
        &= Z(\Mo \setminus e; \vecg') + v_e q^{|V'|} \widetilde{Z}(q, \vecv'; r_{\Mo/_{\IFam, \sigma}e}) \\
        &= Z(\Mo \setminus e; \vecg') + v_e Z(\Mo/_{\IFam,\sigma}e;\vecg').
    \end{align*}
\end{proof}

We show later that the three loopblind boolean interaction families introduced in Section \ref{sec:boolean_and_examples} are contraction--closed and functionally representable.

\section{Polymatroid rank functions}
\label{sec:polrank}
In this section we connect the rank functions of the interaction families defined in \ref{def:rankfunc} to the theory of polymatroids. We first show that the rank function of any hypergraphical model satisfies the normalization and monotonicity property of polymatroids. We then give two conditions on an interaction family and prove that these are sufficient for the rank functions of hypergraphical models induced by interaction families satisfying these conditions to be submodular. Furthermore, we demonstrate that if an interaction family is functionally representable and contraction-closed as well, its rank function is integer-valued. Together, these conditions yield Corollary~\ref{cor:rank_is_polymatroid}, which establishes that the rank function of any model induced by such an interaction family is a polymatroid. This achieves our goal of connecting the theory of boolean interaction families to the theory of polymatroids.

We begin with the definition of a polymatroid, following \cite{Vertigan_Whittle_1993}.
\begin{definition}
    Consider a pair $(E,r)$ where $E$ is a finite set, called the \emph{ground set}, and $r \colon 2^E \to \mathbb{R}_{\geq0}$ is a function, called the \emph{rank function}. If $r$ satisfies the following properties
    \begin{itemize}
        \item \textit{Normalization}: $r(\emptyset) = 0$.
        \item \textit{Monotonicity}: $r$ is increasing, meaning that $r(A) \leq r(B)$ whenever $A \subseteq B \subseteq E$.
        \item \textit{Submodularity}: $r$ satisfies the inequality
        \[
        r(A) + r(B) \geq r(A \cup B) + r(A \cap B)
        \]
        for all subsets $A, B \subseteq E$,
        \item \textit{Integer-valued}: $r(A) \in \mathbb{Z}_{\geq 0}$ for all $A \subseteq E$,
    \end{itemize}
    then we call the pair $(E,r)$ a \emph{polymatroid}. It is a \textit{$k$-polymatroid} if $r(e) \leq k$ for all $e \in E$. A $1$-polymatroid is called a matroid.
\end{definition}
\begin{remark}
We remark that different authors use different definition of polymatroids in the literature.
Some authors allow the rank function $r$ to take values in $\mathbb{R}$~\cite{bonin2023natural}, while other authors do not require $r$ to be increasing~\cite{kalman2013version,bernardi2022universal,guan2023deletion}.
We follow the definitions of~\cite{Vertigan_Whittle_1993} throughout.
\end{remark}
In the rest of the section we shall go through these properties. We show that normalization and monotonicity hold for the rank functions of all boolean hypergraphical models. Then we introduce the group--coset and global satisfiability conditions for classifying submodularity.

\subsection{Normalization and monotonicity}
Here we show that normalization and monotonicity hold for all rank functions of boolean hypergraphical models.

\begin{proposition}[Normalization]
    Let $\Mo = (G=(V,E), \{\phi^e\}_{e \in E})$ be a boolean hypergraphical model with \sset $S$. Then $r_{\Mo}(\emptyset) = 0$.
\end{proposition}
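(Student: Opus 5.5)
The plan is to unfold Definition~\ref{def:rankfunc} at $A = \emptyset$ and evaluate the counting function directly. For $A = \emptyset$ the condition ``$\phi^e(\vecs) = 1$ for all $e \in A$'' is vacuous: there is no edge in $A$, so no constraint is imposed on $\vecs$. Every configuration in $\statespace = S^{|V|}$ therefore qualifies, and
\[
R_\Mo(\emptyset) = \#\statespace = q^{|V|},
\]
where $q = |S|$.

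Next I will check that we are in the finite branch of the definition of $r_\Mo$. This requires $R_\Mo(\emptyset) > 0$, which holds as long as $S$ is non-empty, since then $q^{|V|} \geq 1$. A spin value set always has $q \geq 1$, and the case $V = \emptyset$ still gives $q^0 = 1 > 0$, so the finite branch applies in all cases. Then
\[
r_\Mo(\emptyset) = |V| - \log_q q^{|V|} = |V| - |V| = 0.
\]

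There is no real obstacle here; the only delicate point is the degenerate case $q = 1$, where $\log_q$ is undefined. In that case I will note that $R_\Mo(A) \in \{0,1\}$ for every $A$, and I will read the definition through the identity $q^{-r_\Mo(\emptyset)} = R_\Mo(\emptyset)/q^{|V|} = 1$, which is the form actually used in Proposition~\ref{prop:boolmodcountex}; this again forces $r_\Mo(\emptyset) = 0$. Alternatively, I will simply assume $q \geq 2$, as is implicit in taking logarithms base $q$.
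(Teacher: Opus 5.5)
Your proof is correct and is the paper's argument: the constraint set is empty, so $R_\Mo(\emptyset)=q^{|V|}$, and hence $r_\Mo(\emptyset)=|V|-\log_q q^{|V|}=0$. One caveat on your aside about $q=1$: there the identity $q^{-r_\Mo(\emptyset)}=1$ holds for every value of $r_\Mo(\emptyset)$, so it does not force $r_\Mo(\emptyset)=0$, and the right fix is simply to assume $q\geq 2$, which the use of $\log_q$ already requires.
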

\begin{proof}
    Let $|S| = q$. By definition, the counting function for the empty set is the total number of configurations in the state space, as there are no constraints to satisfy. Thus:
    \[ R_{\Mo}(\emptyset) = \#\{ \vecs \in S^{|V|} \} = q^{|V|}. \]
    Substituting this into the definition of the rank function:
    \[ r_{\Mo}(\emptyset) = |V| - \log_q(R_{\Mo}(\emptyset)) = |V| - \log_q(q^{|V|}) = |V| - |V| = 0. \]
\end{proof}

Monotonicity also holds for all boolean hypergraphical models, since extra hyper\-edges can only restrict the solution space, thereby decreasing the count $R_{\Mo}$ and increasing the rank $r_{\Mo}$.

\begin{proposition}[Monotonicity]
    Let $\Mo = (G=(V,E), \{\phi^e\}_{e \in E})$ be a boolean hypergraphical model. The rank function $r_{\Mo}$ is monotonically increasing; that is, for any $A \subseteq B \subseteq E$, it holds that $r_{\Mo}(A) \leq r_{\Mo}(B)$.
\end{proposition}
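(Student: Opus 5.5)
The plan is to compare the solution sets directly and then pass to ranks. For $A \subseteq B \subseteq E$ we have $K_B = \bigcap_{e\in B} K_{\phi^e} = K_A \cap \bigcap_{e \in B\setminus A} K_{\phi^e} \subseteq K_A$. In words, every configuration satisfying all constraints indexed by $B$ in particular satisfies those indexed by $A$. Since $R_\Mo(A) = |K_A|$ by Definition~\ref{def:rankfunc}, it follows that $R_\Mo(B) \le R_\Mo(A)$.

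The second step converts this inequality into one on ranks by cases on whether the counts vanish.
\begin{itemize}
\item If $R_\Mo(B) > 0$, then also $R_\Mo(A) > 0$. Both ranks are then given by the logarithmic formula, and since $\log_q$ is increasing (as $q = |S| \ge 2$), we get $r_\Mo(A) = |V| - \log_q R_\Mo(A) \le |V| - \log_q R_\Mo(B) = r_\Mo(B)$.
\item If $R_\Mo(B) = 0$, then $r_\Mo(B) = \infty$, and the inequality $r_\Mo(A) \le r_\Mo(B)$ holds trivially whatever $r_\Mo(A)$ is.
\end{itemize}
There is no case in which $R_\Mo(A) = 0$ but $R_\Mo(B) > 0$, by the first step.

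The only point needing care, and the mild obstacle, is the degenerate spin value set with $q = 1$. There $\log_q$ is undefined, and the rank function as written in Definition~\ref{def:rankfunc} does not make sense at all. I would note that the statement implicitly assumes $q \ge 2$, as the rank function requires, and otherwise treat the argument as the routine two-case check above.
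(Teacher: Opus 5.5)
Your proof is correct and follows the paper's argument essentially verbatim: containment $K_B \subseteq K_A$ gives $R_{\Mo}(B) \le R_{\Mo}(A)$, then the two cases $R_{\Mo}(B) > 0$ (monotonicity of $\log_q$) and $R_{\Mo}(B) = 0$ (the $\infty$ convention). Your side remark that $q \ge 2$ is implicitly needed for $\log_q$ to make sense is a fair observation that the paper leaves tacit.
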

\begin{proof}
    Recall $R_{\Mo}(A) = \#\{\vecs \in \statespace \mid \phi^e(\vecs) = 1 \quad \forall e \in A\}$.
    Let $A \subseteq B \subseteq E$. The set of configurations satisfying all constraints in $B$ is a subset of the set of configurations satisfying the constraints in $A$. Thus, $R_{\Mo}(B) \leq R_{\Mo}(A)$.
    
    If $R_{\Mo}(B) > 0$, then $R_{\Mo}(A) > 0$, and since the logarithm is a strictly increasing function:
    \[ r_{\Mo}(A) = |V| - \log_q R_{\Mo}(A) \leq |V| - \log_q R_{\Mo}(B) = r_{\Mo}(B). \]
    If $R_{\Mo}(B) = 0$, then by convention $r_{\Mo}(B) = \infty$, which is greater or equal to all possible values of $r_{\Mo}(A)$.
\end{proof}

\subsection{Submodularity}
The third polymatroid property is submodularity. Unlike normalization and monotonicity, submodularity is not guaranteed for arbitrary interaction functions. It requires specific algebraic structure in the solution sets. Here we introduce two conditions that are sufficient for submodularity.

\begin{definition}[Group-Coset]
\label{def:group_coset_class}
Let $\Mo=(G=(V,E), \{\phi^e\}_{e \in E})$ be a hypergraphical model, whose \sset $S$ is a finite abelian group. Then $\Mo$ is called \emph{group--coset} if for every $e \in E$, the solution set $K_{\phi^e}=\{\vecs\in S^{|V|} \mid \phi^e(\vecs)=1\}$ is a coset of a subgroup of $S^{|V|}$.

An interaction family $\IFam = (S,\{ \Phi^k\}_{k \in \mathbb{Z}_{\geq 0}})$, whose \sset $S$ is a finite group is \emph{group--coset} if, for all $k \in \mathbb{Z}_{\geq 0}$ it holds that $K_{\Phi^k}$ is a coset of a subgroup of $S^k$.
\end{definition}

\begin{definition}[Global Satisfiability]
\label{def:global_satisfiable}
A boolean hypergraphical model $\Mo$ with a \sset $S$ is \emph{globally satisfiable} if there exists an $s \in S$ such that, for $\vecs=(s, \dots, s) \in S^{|V|}$ we have $\phi^e(\vecs)=1$ for all $e \in E$. 

An interaction family $\IFam = (S,\{ \Phi^k\}_{k \in \mathbb{Z}_{\geq 0}})$ with a \sset $S$ is globally satisfiable if there is an $s \in S$ such that, for $\vecs^k = (s, \dots s) \in S^k$, it holds that $\Phi^k(\vecs^k)=1$ for all $k$.
\end{definition}

\begin{theorem}[Submodularity]
\label{thm:submodularity}
    Let $\Mo$ be a boolean hypergraphical model. If $\Mo$ is globally satisfiable and group--coset, then the rank function $r_{\Mo}$ is submodular:
    \[ r_{\Mo}(A) + r_{\Mo}(B) \geq r_{\Mo}(A \cup B) + r_{\Mo}(A \cap B) \quad \forall A, B \subseteq E. \]
\end{theorem}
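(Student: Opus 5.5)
The plan is to show that every nonempty solution set $K_A$ is a coset of a subgroup of $S^{|V|}$, so that $R_\Mo(A)=|K_A|$ equals the order of a subgroup. Submodularity of $r_\Mo = |V| - \log_q R_\Mo$ is then equivalent to supermodularity of $\log_q R_\Mo$:
\[
R_\Mo(A)\,R_\Mo(B) \le R_\Mo(A\cup B)\,R_\Mo(A\cap B),
\]
and this will follow from the product formula for subgroups of a finite abelian group.

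First I use global satisfiability. Let $s\in S$ be the witnessing value and $\vecs^*=(s,\dots,s)$. Then $\vecs^*\in K_{\phi^e}$ for every $e\in E$. By the group--coset hypothesis each $K_{\phi^e}$ is a coset $\vecs_e + H_e$ of a subgroup $H_e \le S^{|V|}$. A coset containing $\vecs^*$ can be rewritten as $K_{\phi^e} = \vecs^* + H_e$. Hence for every $A\subseteq E$,
\[
K_A=\bigcap_{e\in A}(\vecs^*+H_e)=\vecs^*+H_A, \qquad H_A:=\bigcap_{e\in A}H_e,
\]
with the convention $H_\emptyset=S^{|V|}$. In particular $K_A\neq\emptyset$, so $R_\Mo(A)=|H_A|>0$ and no infinite ranks occur; this is exactly why global satisfiability is needed, since intersections of cosets with different shifts could otherwise be empty.

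Next I relate the subgroups attached to $A\cup B$ and $A\cap B$. Directly from the definition, $H_{A\cup B}=H_A\cap H_B$. Since $A\cap B$ is contained in both $A$ and $B$, the subgroup $H_{A\cap B}$ contains both $H_A$ and $H_B$. It therefore contains the subgroup $H_A+H_B$ they generate (a subgroup because $S^{|V|}$ is abelian).

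The product formula $|H_A+H_B|\,|H_A\cap H_B| = |H_A|\,|H_B|$, coming from the second isomorphism theorem, then gives
\[
R_\Mo(A)R_\Mo(B)=|H_A||H_B| = |H_A+H_B|\,|H_{A\cup B}| \le |H_{A\cap B}|\,|H_{A\cup B}| = R_\Mo(A\cap B)R_\Mo(A\cup B).
\]
Taking $\log_q$ and substituting into $r_\Mo = |V|-\log_q R_\Mo$ yields the claimed inequality.

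The main subtlety, and the only step I expect to need care, is the common-point argument. The intersection of cosets is a coset of the intersection of the subgroups only when it is nonempty, and global satisfiability supplies a single point lying in all of them. Note that the inequality is in general strict, because $H_A+H_B$ may be a proper subgroup of $H_{A\cap B}$. This is why the argument gives submodularity rather than modularity.
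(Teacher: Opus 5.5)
Your proposal is correct and follows essentially the same route as the paper: global satisfiability supplies a common point, so each $K_A$ is a coset of $H_A=\bigcap_{e\in A}H_e$. Submodularity then follows from $|H_A||H_B|=|H_A\cap H_B||H_A+H_B|$ together with $H_{A\cup B}=H_A\cap H_B$ and $H_A+H_B\le H_{A\cap B}$; your rewriting of each coset as $\vecs^*+H_e$ is just a more explicit form of the paper's remark that a nonempty intersection of cosets is a coset of the intersection of the subgroups.
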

\begin{proof}
    Recall that $K_A = \bigcap_{e \in A} K_{\phi^e}$ denotes the solution set for the constraints in $A$. 
    Since $\Mo$ is group--coset, for every $e \in E$, the set $K_{\phi^e}$ is a coset of a subgroup $H_e \leq S^{|V|}$.

    Crucially, since $\Mo$ is globally satisfiable (Definition \ref{def:global_satisfiable}), there exists a configuration $\vecs^{|V|} \in S^{|V|}$ such that $\phi^e(\vecs^{|V|}) = 1$ for all $e \in E$. 
    Therefore, $\vecs^{|V|} \in K_A$ for any $A \subseteq E$, which implies that the intersection $K_A$ is non-empty.

    By the standard property of cosets, the non-empty intersection of cosets is itself a coset of the intersection of the corresponding subgroups. 
    Thus, $K_A$ is a coset of the subgroup $H_A = \bigcap_{e \in A} H_e$.
    Since the size of a coset is equal to the size of its underlying subgroup, the counting function satisfies $R_{\Mo}(A) = |H_A|$.
    
    We now use the standard group isomorphism theorem relating the order of the intersection and sum of subgroups: 
    \[ |H_A| \cdot |H_B| = |H_A \cap H_B| \cdot |H_A + H_B|. \]
    Note that $H_{A \cup B} = H_A \cap H_B$. 
    Furthermore, $H_A + H_B$ is the subgroup generated by $H_A$ and $H_B$. Since $H_A, H_B \subseteq H_{A \cap B}$, it follows that $H_A + H_B \leq H_{A \cap B}$. 
    Consequently, $|H_A + H_B| \leq |H_{A \cap B}|$.
    
    Substituting these relations into the isomorphism identity:
    \[ |H_A| \cdot |H_B| \leq |H_{A \cup B}| \cdot |H_{A \cap B}|. \]
    Taking the logarithm base $q$ and negating (reversing the inequality) yields:
    \[ -\log_q R_{\Mo}(A) - \log_q R_{\Mo}(B) \geq -\log_q R_{\Mo}(A \cup B) - \log_q R_{\Mo}(A \cap B). \]
    Adding $2|V|$ to both sides yields the submodularity of the rank function.
\end{proof}

\subsection{Integer-valuedness}
The final property requires the rank function to take integer values. We shall introduce conditions for this to hold on the interaction family level. Firstly we extend global satisfiability to the interaction family level.

\begin{definition}
    Let $\IFam = (S, \{\Phi^k\}_{k \in \mathbb{Z}_{\geq 0}})$ be an interaction family. Then $\IFam$ is globally satisfiable if every hypergraphical model $\Mo(G; \IFam)$ induced by $\IFam$ is globally satisfiable.
\end{definition}
 
\begin{theorem}[Integer-valuedness]
\label{thm:structural_integer_rank}
Let $\IFam$ be a boolean interaction family that is globally satisfiable, functionally representable, and contraction-closed.
Then for any model $\Mo(G; \IFam)$ and subset $A \subseteq E$, the rank function $r_{\Mo}(A)$ is an integer.
\end{theorem}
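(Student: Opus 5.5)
I plan to argue by induction on the number of hyperedges in $A$, using the contraction identity of Theorem~\ref{theo:contraction_closure} to peel off one hyperedge at a time. The base case is $A = \emptyset$, where $r_{\Mo}(\emptyset) = 0$ by the Normalization proposition. For the inductive step I take $A \neq \emptyset$ and pick $e \in A$. With the decomposition $e = I \sqcup D$ and dependency map $\sigma$ supplied by functional representability (Proposition~\ref{prop:modelfuncrep}), the theorem gives
\[
r_{\Mo}(A) = r_{\Mo}/e(A \setminus \{e\}) = |D| + r_{\Mo/_{\IFam,\sigma}e}(A \setminus \{e\}).
\]
Here I identify $A\setminus\{e\}$ with the corresponding set of contracted edges $\{f^* : f \in A\setminus\{e\}\}$. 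The contracted model $\Mo/_{\IFam,\sigma}e = \Mo(G/_{\IFam,\sigma}e;\IFam)$ is again induced by $\IFam$. Its hypergraph is loopless, since each $f^*$ is a subset of $V\setminus D$. Its edge set has one fewer edge, indexed by $E\setminus\{e\}$. So the inductive hypothesis, applied to all models induced by $\IFam$ with fewer edges, shows that $r_{\Mo/_{\IFam,\sigma}e}(A\setminus\{e\})$ is an integer. Since $|D|$ is an integer, so is $r_{\Mo}(A)$. To make the induction formally clean, I will induct on $|A|$ simultaneously over all loopless hypergraphs $G$. The statement to prove is then: for every model induced by $\IFam$ and every edge subset of size $n$, the rank is an integer.

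Global satisfiability is needed to exclude the value $\infty$. The identity in Theorem~\ref{theo:contraction_closure} is really an identity of counting functions, $R_{\Mo}(A) = R_{\Mo/_{\IFam,\sigma}e}(A\setminus\{e\})$, and it only becomes a rank identity when that count is positive. Global satisfiability of $\IFam$ says every induced model, including every contracted model, has a constant configuration satisfying all of its constraints. Hence $R_{\Mo}(A) \ge 1$ for all $A$, and $r_{\Mo}(A)$ is finite throughout the induction. As a more direct alternative, I could unwind the induction: contracting the edges of $A$ one at a time and summing the $|D|$'s gives $R_{\Mo}(A) = q^{|V| - \sum |D_i|}$, because at the end no constraints remain and the count is the full state space of the remaining vertices. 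Then $r_{\Mo}(A) = \sum_i |D_i|$.

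I expect the main obstacle to be the technical handling of contracted edges that become constant, together with the implicit looplessness and loopblindness hypotheses. When $\widetilde\phi^f$ is constant, Definition~\ref{def:induced_hyper_contraction} sets $f^* = \emptyset$. That edge then carries the constant interaction $\Phi^0$, and this must equal $1$, not $0$, if the counting identity is to hold. Global satisfiability resolves this. A constant restricted interaction that is satisfied at the all-$s$ configuration, which lies in $K_{\phi^e}$ because $\Phi^{|e|}(s,\dots,s) = 1$, must be identically $1$. Likewise, $\Phi^0 = 1$ by global satisfiability at $k=0$. A later contraction of an empty edge $e=\emptyset$ has $D=\emptyset$ and contributes $0$, so it is harmless. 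I will also state explicitly that $G$ is taken loopless and $\IFam$ loopblind, as in the setting of Section~\ref{sec:delcon}, so that Theorem~\ref{theo:contraction_closure} applies at every stage.
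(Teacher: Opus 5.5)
Your proof is correct and follows the paper's argument. Both proceed by induction on $|A|$, simultaneously over all models induced by $\IFam$ on loopless hypergraphs, using $r_{\Mo}(A) = |D| + r_{\Mo /_{\IFam,\sigma} e}(A\setminus\{e\})$ from Theorem~\ref{theo:contraction_closure} and the fact that the contracted model is again induced by $\IFam$ on a loopless hypergraph. Your explicit use of global satisfiability is a welcome addition: it makes every constant restricted interaction identically $1$, gives $\Phi^0 = 1$, and keeps all counts positive. The paper lists that hypothesis, but neither its proof of this theorem nor its proof of Theorem~\ref{theo:contraction_closure} spells out where it is used.
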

\begin{proof}
We prove by induction on $n \in \mathbb{Z}_{\geq 0}$ the following statement:
\begin{quote}
    For every hypergraphical model $\Mo = \Mo(G; \IFam)$ induced by $\IFam$ on a loopless hypergraph $G = (V, E)$, and every subset $A \subseteq E$ with $|A| = n$, the value $r_{\Mo}(A)$ is an integer.
\end{quote}

\emph{Base case ($n = 0$).} Then $A = \emptyset$ and $R_{\Mo}(\emptyset) = q^{|V|}$, so $r_{\Mo}(\emptyset) = 0 \in \mathbb{Z}$.

\emph{Inductive step.} Suppose the statement holds for $n - 1$, and let $\Mo = \Mo(G; \IFam)$ and $A \subseteq E$ with $|A| = n$. Pick any $e \in A$ and set $A' := A \setminus \{e\}$, so $|A'| = n - 1$. Let $e = I \sqcup D$ be a decomposition with dependency map $\sigma$. By Theorem~\ref{theo:contraction_closure},
\[
    r_{\Mo}(A) = r_{\Mo}/e(A') = |D| + r_{\Mo /_{\IFam,\sigma} e}(A').
\]
The term $|D|$ is an integer. Contraction-closedness of $\IFam$ ensures that $\Mo /_{\IFam,\sigma} e$ is again a model induced by $\IFam$ on a loopless hypergraph, so the inductive hypothesis applied to this model and the subset $A'$ gives that $r_{\Mo /_{\IFam,\sigma} e}(A')$ is an integer. Hence $r_{\Mo}(A)$ is an integer, completing the induction.
\end{proof}

Combining the results of the previous subsections, we obtain
sufficient conditions on an interaction family ensuring that the
rank function of every induced hypergraphical model is a
polymatroid. This is the second main result announced in the
introduction.

\begin{corollary}[Polymatroid Rank]
    \label{cor:rank_is_polymatroid}
    Let $\IFam$ be a loopblind boolean interaction family
    that is group--coset, globally satisfiable, functionally
    representable, and contraction-closed. Then for every loopless
    hypergraph $G$, the rank function $r_{\Mo(G; \IFam)}$
    is a polymatroid.
\end{corollary}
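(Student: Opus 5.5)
The plan is to assemble Corollary~\ref{cor:rank_is_polymatroid} by checking the four polymatroid axioms for $r_\Mo$ with $\Mo=\Mo(G;\IFam)$, $G$ loopless. First, global satisfiability of $\IFam$ provides $s\in S$ with $\Phi^k(s,\dots,s)=1$ for all $k$. Hence the constant configuration $(s,\dots,s)\in S^{|V|}$ satisfies every $\phi^e(\vecs)=\Phi^{|e|}(\vecs|_e)=1$, and $\Mo$ is globally satisfiable as a model. In particular $R_\Mo(A)\ge 1$ for all $A$, so $r_\Mo$ is finite. Together with $R_\Mo(A)\le q^{|V|}$ this gives $r_\Mo\colon 2^E\to\mathbb{R}_{\ge0}$, so $r_\Mo$ is a genuine candidate rank function and the $\infty$ convention never enters. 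Normalization and monotonicity then follow directly from the corresponding propositions of Section~\ref{sec:polrank}, which hold for every boolean model.

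For submodularity I will apply Theorem~\ref{thm:submodularity}, which requires showing that $\Mo$ is group--coset in the model sense. Fix $e\in E$ with $|e|=k$; since $G$ is loopless, $e$ is a genuine subset of $V$. By assumption $K_{\Phi^k}=x+H$ for some subgroup $H\le S^k$. The restriction map $\rho_e\colon S^{V}\to S^{e}\cong S^k$ is a group homomorphism, using the enumeration of $e$. Then $K_{\phi^e}=\rho_e^{-1}(x+H)$. Choosing $\tilde x\in S^V$ with $\rho_e(\tilde x)=x$, this equals $\tilde x+\rho_e^{-1}(H)$, and $\rho_e^{-1}(H)$ is a subgroup of $S^V$. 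So each $K_{\phi^e}$ is a coset, and Theorem~\ref{thm:submodularity} yields submodularity. Looplessness matters exactly here: it makes $\rho_e$ a coordinate projection rather than a map with repeated coordinates. Loopblindness also ensures that models on arbitrary hypergraphs reduce to loopless ones, by Proposition~\ref{prop:loopless_reduction}, although this is not needed for the statement as posed.

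For integer-valuedness I will invoke Theorem~\ref{thm:structural_integer_rank}, whose hypotheses of global satisfiability, functional representability and contraction-closedness are all assumed. Its induction passes through contracted models $\Mo/_{\IFam,\sigma}e$, which live on loopless hypergraphs by Definition~\ref{def:induced_hyper_contraction}, so the induction stays inside the class where Theorem~\ref{theo:contraction_closure} applies. The main point to double-check is the subtlety the theorem's proof glosses over: a contracted edge may become $f^*=\emptyset$, or may coincide with another contracted edge. Empty edges carry a constant interaction, which is harmless for counting because the constant is $1$ by global satisfiability, and duplicate edges are permitted in hypergraphs. I will therefore remark that the induction hypothesis covers such hypergraphs. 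Putting the four properties together, $(E,r_\Mo)$ is a polymatroid.
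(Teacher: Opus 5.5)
Your proposal is correct and follows the paper's route: the paper proves the corollary simply by combining normalization, monotonicity, Theorem~\ref{thm:submodularity} and Theorem~\ref{thm:structural_integer_rank}, exactly as you do. Your additional checks fill in details the paper leaves implicit. These are transferring group--coset and global satisfiability from the family to the induced model, finiteness of $r_\Mo$, and the fact that constant restricted interactions and $\Phi^0$ both equal $1$.
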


\section{Application to three common interaction families}
\label{sec:example-interaction-families}
In this section, we demonstrate the usefulness of our `Tutte-polynomial-like' theory of boolean interaction families by applying it to the three commonly used boolean interaction families introduced in Section~\ref{sec:boolean_and_examples}: the Parity Ising interaction family, the Delta Potts interaction family, and the And Ising interaction family. We recover three types of polymatroids commonly associated to hypergraphs and show that a number of results from the literature naturally follow from our framework.

For each interaction family, we follow a systematic analysis:
\begin{enumerate}
    \item We verify the algebraic properties required for our deletion--contraction theory (functional representability and contraction-closure).
    \item We identify the specific combinatorial structure (matroid or polymatroid) that corresponds to the model's rank function.
    \item We derive the explicit, model-specific geometric operations for deletion and contraction on the hypergraph.
\end{enumerate}

We begin with a subsection introducing three polynomials commonly associated to (poly)matroids and record how our rank generating function specializes to each. The three subsequent subsections then apply the framework to the Parity Ising, Delta Potts, and And Ising families.

\subsection{Related polynomials}
\label{sec:poincare}

The rank generating function of Definition~\ref{def:abstract_rankpoly} is related to three classical polynomial invariants in the case that the rank function $r$ is a (poly)matroid: the Poincar\'e polynomial of a polymatroid, introduced in~\cite{Helg}; the $k$-polymatroid Tutte polynomial of~\cite{Berrekkal2026}; and the multivariate Tutte polynomial of a matroid~\cite{Sok05, ellis-monaghanHandbookTuttePolynomial2022}. We recall all three, and record the specializations of our rank generating function under which each is recovered. The three subsequent subsections identify each family's partition function as an evaluation of one or more of these polynomials on a specific polymatroid.

\begin{definition}[Poincar\'e polynomial~\cite{Helg}]
\label{def:Helgason_poincare}
Let $(E, r)$ be a polymatroid. Its \emph{Poincar\'e polynomial} is
\[
    \tau(\lambda, \eta;\, r)
    \;:=\;
    \sum_{A \subseteq E} (\eta - 1)^{|A|}\, \lambda^{r(E) - r(A)},
\]
where $\lambda$ and $\eta$ are formal commuting variables. The
associated \emph{characteristic polynomial} is
\[
    \tau(\lambda, 0;\, r)
    \;=\; \sum_{A \subseteq E} (-1)^{|A|}\, \lambda^{r(E) - r(A)}.
\]
\end{definition}

\begin{definition}[$k$-polymatroid Tutte polynomial~\cite{Berrekkal2026}]
\label{def:bem_Tk}
Let $(E, r)$ be a $k$-polymatroid. Its \emph{$k$-polymatroid Tutte
polynomial} is
\[
    T_k(P;\, x, y)
    \;:=\;
    \sum_{A \subseteq E} (x - 1)^{r(E) - r(A)}\, (y - 1)^{k|A| - r(A)},
\]
where $x$ and $y$ are formal commuting variables.
\end{definition}

\begin{definition}[Multivariate Tutte polynomial~\cite{Sok05}]
\label{def:multi_tutte}
Let $M = (E, r)$ be a matroid. Its \emph{multivariate Tutte
polynomial} is
\[
    \widetilde Z_{\mathrm{Tutte}}(q, \vecv;\, M)
    \;:=\;
    \sum_{A \subseteq E} q^{-r(A)} \prod_{e \in A} v_e,
\]
where $q$ and $\vecv = (v_e)_{e \in E}$ are formal commuting
variables.
\end{definition}

The three polynomials and our rank generating function are linked by the
following specializations.

\begin{proposition}[Rank Generating Function, Poincar\'e, $T_k$, and Tutte]
\label{prop:rankpoly_specializations}
Let $r \colon 2^E \to \mathbb{Z}_{\geq 0}$ be a function.
\begin{enumerate}
    \item If $(E, r)$ is a polymatroid, then
    \[
        \widetilde Z\bigl(q,\, (\eta - 1)_{e \in E};\, r\bigr)
        \;=\;
        q^{-r(E)}\, \tau(q, \eta;\, r).
    \]
    \item If $(E, r)$ is a $k$-polymatroid $P$, then for $v_e = v$
    uniformly across edges,
    \[
        \widetilde Z(q, v;\, r)
        \;=\;
        \Bigl(\tfrac{v^{1/k}}{q}\Bigr)^{r(E)}\,
        T_k\bigl(P;\, 1 + \tfrac{q}{v^{1/k}},\, 1 + v^{1/k}\bigr).
    \]
    \item If $(E, r)$ is a matroid $M$, then
    \[
        \widetilde Z(q, \vecv;\, r)
        \;=\;
        \widetilde Z_{\mathrm{Tutte}}(q, \vecv;\, M).
    \]
\end{enumerate}
\end{proposition}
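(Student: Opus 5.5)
All three identities are direct manipulations of the subset sum in Definition~\ref{def:abstract_rankpoly}. In each case I will substitute the stated specialization and compare term by term over $A \subseteq E$. No structural property of polymatroids is used beyond $r$ being integer-valued and finite. That makes every power $q^{-r(A)}$ a genuine monomial, so multiplying and dividing by powers of $q$ is legitimate. Normalization, monotonicity and submodularity are needed only so that the objects $\tau$, $T_k$ and $\widetilde Z_{\mathrm{Tutte}}$ are defined.

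For (1), I set $v_e = \eta - 1$ for every $e$, so that $\prod_{e\in A} v_e = (\eta-1)^{|A|}$. I then factor out $q^{-r(E)}$ from every term by writing $q^{-r(A)} = q^{-r(E)}\, q^{r(E)-r(A)}$. Since $r(E)$ does not depend on $A$, it can be pulled out of the sum. What remains is $\sum_{A}(\eta-1)^{|A|} q^{r(E)-r(A)} = \tau(q,\eta;r)$ by Definition~\ref{def:Helgason_poincare} with $\lambda = q$.

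For (2), I introduce a formal symbol $w$ with $w^k = v$ and read $v^{1/k}$ as $w$, which sidesteps any branch choice. I substitute $x - 1 = q/w$ and $y - 1 = w$ into Definition~\ref{def:bem_Tk}. The summand for $A$, multiplied by the prefactor $(w/q)^{r(E)}$, becomes
\[
w^{r(E)}q^{-r(E)}\, q^{r(E)-r(A)} w^{-(r(E)-r(A))}\, w^{k|A|-r(A)}.
\]
Collecting exponents, the power of $q$ is $-r(A)$ and the power of $w$ is
\[
r(E) - r(E) + r(A) + k|A| - r(A) = k|A|.
\]
Since $w^{k|A|} = v^{|A|} = \prod_{e\in A} v$, each term equals the corresponding term of $\widetilde Z(q,v;r)$. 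The only step needing care is this exponent bookkeeping, in particular checking that the possibly negative powers of $w$ and $q$ cancel exactly. This is where I expect any error to hide, so I would write the exponent sum out explicitly.

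For (3), Definitions~\ref{def:abstract_rankpoly} and~\ref{def:multi_tutte} are literally the same sum once $r$ is the matroid rank function. The identity is therefore immediate, with nothing to prove beyond noting that a matroid's rank is finite and integer-valued, so the convention $q^{-\infty}=0$ never enters.
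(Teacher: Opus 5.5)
Your proposal is correct and follows the paper's proof. Parts (1) and (3) are the same direct substitution. In (2), you substitute $x-1=q/w$, $y-1=w$ and check the exponents term by term, whereas the paper first factors out $(x-1)^{r(E)}$ to write $T_k(P;x,y)=(x-1)^{r(E)}\,\widetilde Z\bigl((x-1)(y-1),(y-1)^k;r\bigr)$ and then substitutes; this is the same computation in a different order, and your formal root $w$ with $w^k=v$ is a harmless tidying of the paper's $v^{1/k}$.
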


\begin{proof}
For (1), substituting $v_e = \eta - 1$ in
Definition~\ref{def:abstract_rankpoly} gives
\[
    \widetilde Z\bigl(q,\, (\eta - 1)_{e \in E};\, r\bigr)
    \;=\;
    \sum_{A \subseteq E} q^{-r(A)}\, (\eta - 1)^{|A|}
    \;=\;
    q^{-r(E)} \sum_{A \subseteq E} (\eta - 1)^{|A|}\,
        q^{r(E) - r(A)},
\]
which is $q^{-r(E)} \tau(q, \eta;\, r)$ by
Definition~\ref{def:Helgason_poincare}.

For (2), factoring $(x-1)^{r(E)}$ out of the subset sum defining
$T_k$ gives
\begin{align*}
    T_k(P;\, x, y)
    \;&=\;
    (x-1)^{r(E)}
    \sum_{A \subseteq E} \bigl[(x-1)(y-1)\bigr]^{-r(A)}\,
        \bigl[(y-1)^k\bigr]^{|A|} \\
    \;&=\;
    (x-1)^{r(E)}\, \widetilde Z\bigl((x-1)(y-1),\, (y-1)^k;\, r\bigr).
\end{align*}
Substituting $x = 1 + q/v^{1/k}$ and $y = 1 + v^{1/k}$ yields
$(x-1)(y-1) = q$ and $(y-1)^k = v$, so the rank generating function on the
right-hand side equals $\widetilde Z(q, v;\, r)$ and the prefactor
becomes $(q/v^{1/k})^{r(E)}$. Rearranging gives the stated
identity.

Part (3) is immediate by comparing
Definition~\ref{def:abstract_rankpoly} with
Definition~\ref{def:multi_tutte}.
\end{proof}

\subsection{Parity Ising interaction family}
\label{Sec:Isingmodel}

In this section we determine properties of the boolean Parity Ising interaction family $\mcib$.
\begin{proposition}
\label{prop:mcib_properties}
The boolean Parity Ising interaction family $\mcib$ is functionally representable,
contraction--closed, group--coset, and globally satisfiable.
\end{proposition}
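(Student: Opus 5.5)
We verify the four properties in turn, viewing $S = \mathbb{Z}/2\mathbb{Z}$ as the field $\mathbb{F}_2$ and observing that $K_{\PhibIs^k} = \{\vecs \in \mathbb{F}_2^k \mid \sum_i s_i = 0\}$. This is the kernel of the linear functional $\vecs \mapsto \sum_i s_i$.

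\textbf{Global satisfiability and group--coset.} For global satisfiability, take $s = 0$. Then $\sum_i 0 = 0$, so $\PhibIs^k(0,\dots,0) = 1$ for every $k$, including $k=0$ where the empty sum is $0$. The same spin works on every induced model, since every hyperedge then sees the all-zero configuration.

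For the group--coset property, $K_{\PhibIs^k}$ is the kernel of a group homomorphism $\mathbb{F}_2^k \to \mathbb{F}_2$. It is therefore a subgroup, hence trivially a coset. For $k = 0$ it is the whole (trivial) group $S^0$.

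\textbf{Functional representability.} For $k \geq 1$ we use the decomposition of Example~\ref{ex:func_repr}: $I = [k-1]$, $D = \{k\}$ and $\sigma(\vecs_I) = \sum_{i<k} s_i$. For $k=1$ this gives $I = \emptyset$ and $\sigma \equiv 0$, consistent with $K_{\PhibIs^1} = \{0\}$. For $k=0$ take $I = D = \emptyset$.

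\textbf{Contraction-closedness.} By Proposition~\ref{prop:independence_decomposition} it suffices to check the single decomposition above for each $k \le k'$. The extension $\hat\sigma$ substitutes $s_k = \sum_{i \in I} s_i$ and leaves the other coordinates free. Fix $A \subseteq [k']$ and compute $\sum_{j \in A} \hat\sigma(\vecs)_j$ over $\mathbb{F}_2$. If $k \notin A$, this is $\sum_{j\in A} s_j$, so we take $A' = A$. If $k \in A$, it equals
\[
\sum_{j \in A\setminus\{k\}} s_j + \sum_{i \in I} s_i = \sum_{j \in A \,\triangle\, I} s_j ,
\]
where $\triangle$ is the symmetric difference. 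We then take $A' = (A\setminus\{k\}) \,\triangle\, I \subseteq [k']\setminus D$.

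In both cases $\PhibIs^{|A|}(\hat\sigma(\vecs)|_A) = 1$ if and only if the sum over $A'$ vanishes, i.e. it equals $\PhibIs^{|A'|}(\vecs|_{A'})$. When $A' = \emptyset$ the function is constantly $1$, which is the allowed constant case. Uniqueness of $A'$ in the non-constant case follows from Proposition~\ref{prop:contraction_closed_uniqueness}.

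\textbf{Main obstacle.} The only delicate step is this symmetric-difference bookkeeping. Pairs of coordinates of $I$ that appear both in $A$ and through $\sigma$ cancel mod $2$; this is exactly the loop-reduction phenomenon used to show $\mci$ is loopblind. The paper's positive example ($A=\{1,3,4\}\mapsto\{2,4\}$) is a special case and serves as a sanity check. We will also note that $\mcib$ is loopblind by Proposition~\ref{prop:parisboiso} combined with loopblindness of $\mci$ (via $A_{\mathrm{odd}}$), since Definitions~\ref{def:funcrepr} and~\ref{def:contraction_closed} presuppose loopblindness.
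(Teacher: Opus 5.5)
Your proof is correct and follows the paper's argument essentially verbatim: the same kernel-subgroup and all-zeros observations, the same pivot decomposition $I=[k-1]$, $D=\{k\}$, and the same choice $A'=A\setminus D$ or $A'=(A\setminus D)\triangle I$ according to whether $k\in A$; your explicit handling of $k\in\{0,1\}$ and your appeal to Proposition~\ref{prop:independence_decomposition} to cover every witnessing decomposition are extra care that the paper leaves implicit. One cosmetic slip: in your display the index set on the right should be $(A\setminus\{k\})\triangle I$ rather than $A\triangle I$, since the latter still contains the eliminated coordinate $k$; you use the correct set in the next line.
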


\begin{proof}
    The boolean Parity Ising interaction family $\mcib$ satisfies the following properties:
    \begin{enumerate}
        \item \textbf{Group-Coset:} The solution set $K_{\Phi^k} = \{ \vecs \in S^k \mid \sum_{i=1}^k s_i \equiv 0 \mod 2 \}$ is the kernel of the homomorphism $\vecs \mapsto \sum s_i$ over $\mathbb{Z}/2\mathbb{Z}$. Thus, it is a subgroup of $S^k$ (specifically, the even-weight code).
        \item \textbf{Globally Satisfiable:} The all-zeros configuration $\vecs = \veco$ satisfies the parity constraint for all hypergraphs.
        \item \textbf{Functionally Representable:} The linear constraint $\Phi^k(\vecs) = 1 \Leftrightarrow \sum_{i=1}^k s_i = 0$ for $\vecs \in S^k$ allows us to express any chosen variable $s_k$ as the sum of the others: $s_k \equiv \sum_{i=1}^{k-1} s_i \mod 2$. We set $I = \{1, \dots, k-1\}$ and $D=\{k\}$, with the dependency map $\sigma(\vecs_I) = \sum_{j \in I} s_j$.
        \item \textbf{Contraction-closed:} Fix $k \geq 1$, $k' \geq k$, and the decomposition $I = [k-1]$, $D = \{k\}$, $\sigma(\vecs_I) \equiv \sum_{i \in I} s_i \pmod 2$ above. The extended map $\hat\sigma \colon S^{[k'] \setminus D} \to S^{[k']}$ acts by
        \[
            \hat\sigma(\vecs)|_i = \begin{cases} s_i & i \in [k'] \setminus D, \\ \sum_{j \in I} s_j \pmod 2 & i = k. \end{cases}
        \]
        Let $A \subseteq [k']$. For any $\vecs \in S^{[k'] \setminus D}$,
        \begin{equation}
            \label{eq:bparisiidentity}
            \PhibIs^{|A|}(\hat\sigma(\vecs)|_A) = 1 \iff \sum_{i \in A \setminus D} s_i + |A \cap D| \sum_{j \in I} s_j \equiv 0 \pmod 2.
        \end{equation}
        Since $D = \{k\}$, the intersection $A \cap D$ is either empty or equal to $\{k\}$, so $|A \cap D| \in \{0, 1\}$ and the second term is either $0$ or $\sum_{j \in I} s_j$. Set $A' = A \setminus D$ if $A \cap D = \emptyset$ and $A' = (A \setminus D) \,\triangle\, I$ if $A \cap D = \{k\}$, both of which are subsets of $[k'] \setminus D$. We then have $\PhibIs^{|A'|}(\vecs|_{A'}) = \PhibIs^{|A|}(\hat{\sigma}(\vecs)|_A)$ from \eqref{eq:bparisiidentity}. \qedhere
    \end{enumerate}
\end{proof}
By Proposition~\ref{prop:parisboiso}, the interaction families $\mci$
and $\mcib$ are isomorphic with $\alpha_k = 2$ and $\beta_k = -1$ for
every $k$. Propositions~\ref{prop:isoinstant}
and~\ref{prop:equivmodpart} then yield, for any hypergraph $G = (V,E)$,
\begin{equation}
    Z(\Mo(G; \mci); \vecg)
    \;=\; \exp\!\left(- \sum_{e \in E} g_e \right)
          Z(\Mo(G; \mcib); 2 \vecg).
\end{equation}

\begin{corollary}
    \label{cor:isingdelcon}
    Let $G = (V,E)$ be a hypergraph and $e \in E$. Let $\sigma$ be a dependency map of $e$ with respect to $\mcib$. Then the partition function of the hypergraphical model $\Mo(G; \mci)$ satisfies the following deletion contraction relation:
     \[ Z(\Mo(G; \mci); \vecg) = e^{-g_e} \left(Z(\Mo(G \setminus e; \mci); \vecg') + v_e Z(\Mo(G /_{\mcib, \sigma} e; \mci); \vecg') \right),\]
     where $\vecg' = (g_f)_{f \neq e}$.
\end{corollary}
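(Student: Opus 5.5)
The plan is to transport the boolean recurrence of Corollary~\ref{cor:partition_delcon} through the isomorphism $\mci \cong \mcib$. Throughout, $v_e$ is understood as the boolean variable at doubled coupling, $v_e = e^{2g_e} - 1$; I will say this explicitly when writing the proof.

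First, by Proposition~\ref{prop:mcib_properties}, $\mcib$ is loopblind, boolean, functionally representable and contraction-closed. So for loopless $G$ the hypotheses of Theorem~\ref{theo:contraction_closure} hold, and the contraction $G /_{\mcib,\sigma} e$ is defined.

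If $G$ has loops, I would first replace it by the loopless $\overline G$ of Proposition~\ref{prop:loopless_reduction}. That proposition is stated for distributions, not partition functions, so I would check that the replacement changes no partition function here. This holds because both $\mci$ and $\mcib$ reduce a multiset edge to its odd-multiplicity subset $A_{\mathrm{odd}}$ by an exact identity of interaction functions. That leaves $Z$ unchanged, so I may assume $G$ is loopless.

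Second, I would apply the displayed identity following Proposition~\ref{prop:mcib_properties} to $G$:
\[
Z(\Mo(G;\mci);\vecg) = \exp\!\Big(-\sum_{f\in E} g_f\Big) Z(\Mo(G;\mcib); 2\vecg).
\]
Then Corollary~\ref{cor:partition_delcon} with parameters $2\vecg$ gives
\[
Z(\Mo(G;\mcib);2\vecg) = Z(\Mo(G\setminus e;\mcib);2\vecg') + (e^{2g_e}-1)\, Z(\Mo(G/_{\mcib,\sigma}e;\mcib);2\vecg').
\]
I would then split the prefactor as $e^{-g_e}\exp(-\sum_{f\neq e} g_f)$ and push $\exp(-\sum_{f\neq e} g_f)$ into both terms.

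Third, each term is converted back using the same identity, i.e.\ Propositions~\ref{prop:isoinstant} and~\ref{prop:equivmodpart}, now on the smaller hypergraphs. For $G\setminus e$, whose edge set is $E\setminus\{e\}$, this is immediate.

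For $G/_{\mcib,\sigma}e$, the edge multiset $\{f^* \mid f \in E\setminus\{e\}\}$ is again indexed by $E\setminus\{e\}$. Applying the isomorphism with $\alpha=2$, $\beta=-1$ edgewise gives
\[
\exp\!\Big(-\sum_{f\ne e} g_f\Big)\, Z(\Mo(G/_{\mcib,\sigma}e;\mcib);2\vecg') = Z(\Mo(G/_{\mcib,\sigma}e;\mci);\vecg').
\]
Collecting the terms yields the claimed recurrence.

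The main obstacle is this last step. Some contracted edges $f^*$ may be empty: by Definition~\ref{def:induced_hyper_contraction} this happens when $\widetilde\phi^f$ is constant. I need the isomorphism relation $\PhiIs^0 = 2\PhibIs^0 - 1$ to hold at $k=0$, with $\PhiIs^0 = 1$ as the empty product and $\PhibIs^0 = 1$ since the empty sum is even. This holds, so Proposition~\ref{prop:isoinstant} applies uniformly, including to empty edges.

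I would also check that a constant restricted interaction is always $\equiv 1$, never $\equiv 0$. This follows from global satisfiability: the all-zero configuration lies in $K_{\phi^e}$ and satisfies every $\phi^f$. Hence replacing such an $f$ by the empty edge is consistent with the factor equal to $1$ used in the proof of Theorem~\ref{theo:contraction_closure}.
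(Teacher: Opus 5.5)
Your proof is correct and follows the route the paper intends. The paper states the corollary as an immediate consequence of the displayed identity $Z(\Mo(G;\mci);\vecg)=\exp(-\sum_{f\in E} g_f)\,Z(\Mo(G;\mcib);2\vecg)$, combined with Corollary~\ref{cor:partition_delcon} for $\mcib$ and converted back on $G\setminus e$ and $G/_{\mcib,\sigma}e$ exactly as you do. Your extra checks fill in details the paper leaves implicit: that $v_e$ must be read as $e^{2g_e}-1$, that $\PhiIs^0 = 2\PhibIs^0-1$ covers empty contracted edges, and that constant restricted interactions are identically $1$.
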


\subsubsection{Incidence matroids}
As a result of Proposition \ref{prop:mcib_properties} the rank function of a hypergraphical boolean Parity Ising model is a polymatroid. In this section we shall show that this polymatroid is, in fact, a binary linear matroid. We start with some preliminary definitions, following \cite{oxleymatroid}.

\begin{definition}
    \label{def:linmat}
    A \emph{linear matroid} $M = (E, r)$ represented by a matrix $B = (\vecmu_1, \dots, \vecmu_k)$ over a vector space $U$ consists of:
    \begin{itemize}
        \item A finite \emph{ground set} $E = \{e_1, \dots, e_k\}$, which indexes the columns of $B$.
        \item A \emph{rank function} $r: 2^E \to \mathbb{Z}_{\geq0}$ that assigns to each subset $A \subseteq E$ the dimension of the vector space spanned by the corresponding column vectors of $A$, i.e.,
        \[
            r(A) = \dim \operatorname{im} B|_A,
        \]
        where $B|_A$ corresponds to the submatrix of $B$ with columns indexed by $A$.
    \end{itemize}
\end{definition}

A linear matroid is called \textit{binary} if it can be represented by a matrix over $\mathbb{F}_2$.

We can associate a matroid to a hypergraph using its incidence matrix over $\mathbb{F}_2$ as follows.
 
\begin{definition}
    Let $G = (V,E = \{e_1, \dots, e_k\})$ be a hypergraph with $|V| = n$ and $|E| = k$ and let $B(G)$ be its incidence matrix over $\mathbb{F}_2$, which is defined by
    \[ B(G) = (\vecmu_{1}, \dots, \vecmu_{k}),\]
    where $\vecmu_{i}$ is the $n$-dimensional indicator column vector with a $1$ at index $j$ if and only if $j \in e_i$. The binary \emph{incidence} matroid $M(G) = (E, r)$ corresponding to the hypergraph $G$ is now defined as the matroid represented by the incidence matrix $B(G)$.
\end{definition}

\begin{proposition}
\label{prop:parity_rank_equals_matroid}
    Let $G = (V,E = \{e_1, \dots, e_k \})$ be a hypergraph with $|V| = n$ and let $A \subseteq E$, then:
    \[ r_{\Mo(G; \mcib)}(A) = r_{M(G)}(A).\]
\end{proposition}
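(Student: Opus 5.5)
Since both sides are determined by counts of configurations, the plan is to compute the counting function $R_{\Mo(G;\mcib)}(A)$ directly as the size of a solution set of a linear system over $\mathbb{F}_2$, and then compare it with the rank of the column submatrix $B(G)|_A$. Identify $S = \mathbb{Z}/2\mathbb{Z}$ with $\mathbb{F}_2$ and a configuration $\vecs \in S^{|V|}$ with a vector in $\mathbb{F}_2^n$. For a hyperedge $e_i$ of the loopless hypergraph $G$, the constraint $\phi^{e_i}(\vecs) = 1$ means $\sum_{j \in e_i} s_j \equiv 0 \pmod 2$, which is exactly $\vecmu_i^{\mathsf T}\vecs = 0$ over $\mathbb{F}_2$. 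If the hypergraph is allowed to contain loops, first pass to a loopless hypergraph through loopblindness (Proposition~\ref{prop:loopless_reduction}, with $A' = A_{\mathrm{odd}}$). Reducing a multiset edge modulo $2$ gives exactly the $\mathbb{F}_2$ incidence column, so the constraint is still $\vecmu_i^{\mathsf T}\vecs = 0$ with $\vecmu_i$ the mod-$2$ multiplicity vector. This is consistent with how the incidence matrix over $\mathbb{F}_2$ should be read.

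With this reformulation,
\[
K_A = \{\vecs \in \mathbb{F}_2^n \mid \vecmu_i^{\mathsf T}\vecs = 0 \ \forall e_i \in A\} = \ker\bigl(B(G)|_A^{\mathsf T}\bigr).
\]
This is a linear subspace; it is nonempty since it contains $\veco$, in line with the global satisfiability of Proposition~\ref{prop:mcib_properties}. So $R_{\Mo}(A) = 2^{\dim \ker (B|_A^{\mathsf T})}$, and rank--nullity for the map $B|_A^{\mathsf T}\colon \mathbb{F}_2^n \to \mathbb{F}_2^{|A|}$ gives
\[
\dim\ker(B|_A^{\mathsf T}) = n - \operatorname{rank}(B|_A^{\mathsf T}).
\]
Hence $r_{\Mo}(A) = n - \log_2 R_{\Mo}(A) = \operatorname{rank}(B|_A^{\mathsf T})$.

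It remains to use row rank equals column rank over the field $\mathbb{F}_2$. This gives $\operatorname{rank}(B|_A^{\mathsf T}) = \operatorname{rank}(B|_A) = \dim\operatorname{span}\{\vecmu_i : e_i \in A\}$, which is $r_{M(G)}(A)$ by Definition~\ref{def:linmat}.

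The only delicate point is the bookkeeping in the first step: checking that the boolean parity interaction on $e_i$ is precisely the vanishing of the inner product with the $i$-th column. It also requires checking that $q = 2$, so that $\log_q$ converts the count into a dimension. The rest is standard linear algebra.
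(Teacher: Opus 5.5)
Your proposal is correct and follows the paper's proof essentially step for step: it rewrites the parity constraints as $\vecs \in \ker B(G)|_A^{T}$ over $\mathbb{F}_2$, turns $\log_2$ of the count into $\dim\ker$, and concludes by rank--nullity together with row rank equals column rank. Your remark on multiset edges, reading the incidence column as the mod-$2$ multiplicity vector, is a sensible clarification the paper does not make; with the paper's literal membership-indicator definition the identity would fail for an edge such as $\{v,v\}$.
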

\begin{proof}
    To see this, we start by noting that 
    \begin{align*}
    r_{\Mo(G; \mcib)}(A) &= n- \log_2 (\# \cap_{e \in A} K_{\phi^e}) \\
    &= n - \log_2 \left(\# \{ \vecs \in S^n \, \colon \, \sum_{i \in e} s_i \equiv 0 \mod 2 \quad \forall e \in A \}\right).
    \end{align*}
    Note that, if we view $\vecs$ as a column vector over $\mathbb{F}_2$, then $\sum_{i \in e_j} s_i = \vecs \cdot \vecmu_j$. So that 
    \[ \sum_{i \in e} s_i \equiv 0 \mod 2 \quad \forall e \in A  \Leftrightarrow B(G)|_A^T \cdot \vecs = 0 \Leftrightarrow \vecs \in \ker B(G)|_A^T, \]
    where $B(G)|_A^T$ denotes the transpose of the incidence matrix so that its rows correspond to the edges in $A$. Now $\log_2 (\# \ker B(G)|_A^T) = \dim \ker B(G)|_A^T$, where we take the dimension over $\mathbb{F}_2$. So we conclude that 
    \[
    r_{\Mo(G; \mcib)}(A) = n-\dim \ker B(G)|_A^T.
    \]
    It is now a standard result from linear algebra that
    \[ n - \dim \ker B(G)|_A^T  = \dim \operatorname{im} B(G)|_A^T, \]
    and since row rank equals column rank this proves the statement.
\end{proof}

\begin{remark}[Gauge invariance and matroid representatives]
\label{rem:gauge}
Proposition~\ref{prop:parity_rank_equals_matroid} shows that
$r_{\Mo(G; \mcib)}$ depends on $G$ only through the binary matroid $M(G)$ represented by the incidence matrix $B(G)$. Combined with Corollary~\ref{cor:rank_determines_partition}, this means that any two hypergraphs on a common vertex set whose incidence matrices represent the same matroid have identical boolean Parity Ising partition functions. Concretely, left-multiplying $B(G)$ by an invertible matrix $P \in \mathrm{GL}_{|V|}(\mathbb{F}_2)$ is a change of basis of $\mathbb{F}_2^{V}$ that preserves all linear dependencies among the columns, hence the matroid $M(G)$, and therefore leaves the partition function unchanged. Transformations of this kind are precisely the \emph{gauge transformations} of~\cite{spinmod}, where they are shown by a direct computation to preserve the partition function; here their invariance is an immediate consequence of the matroid-theoretic
description of the rank function. 
\end{remark}

\begin{remark}[Connection to the Tutte polynomial]
\label{rem:parity_tutte}
By Proposition~\ref{prop:parity_rank_equals_matroid}, $r_{\Mo(G; \mcib)}$ is the rank function of the binary matroid $M(G)$. Proposition~\ref{prop:rankpoly_specializations}(3) therefore identifies the rank generating function of $\Mo(G; \mcib)$ with the multivariate Tutte polynomial of $M(G)$:
\[
    \widetilde Z(2, \vecv;\, r_{\Mo(G; \mcib)})
    \;=\;
    \widetilde Z_{\mathrm{Tutte}}(2, \vecv;\, M(G)).
\]
For graphs $G$, $M(G)$ coincides with the cycle matroid, so the counting evaluations of Theorem~\ref{theo:counteval} recover known evaluations of the Tutte polynomial; see e.g.~\cite{ellis-monaghanHandbookTuttePolynomial2022}. In particular, setting $v_e = -1$ for every $e \in E$ gives the classical chromatic polynomial of $G$ evaluated at $q = 2$:
\[
    \widetilde Z\bigl(2, (-1)_{e \in E};\, r_{\Mo(G; \mcib)}\bigr)
    \;=\; \frac{P(G; 2)}{2^{|V|}},
\]
where $P(G; q) = \widetilde Z_{\mathrm{Tutte}}(q, (-1)_{e\in E};\, M(G))$ denotes the chromatic polynomial of $G$. We can see this directly from Theorem~\ref{theo:counteval}: for an edge $e = \{i,j\}$ and $\vecs \in S^V$, the constraint $\phi^e(\vecs) = 1$ becomes $s_i \neq s_j$, so a configuration with $\phi^e(\vecs) = 0$ for every $e \in E$ is precisely a proper $2$-coloring of $G$. Hence
\[
    \#\{\vecs \in S^V \mid \phi^e(\vecs) = 0 \text{ for all } e \in E\}
    \;=\; P(G; 2),
\]
and dividing by $2^{|V|}$ recovers the rank generating function via Proposition~\ref{prop:boolmodcountex}.
\end{remark}

\subsubsection{Deletion-contraction for Parity Ising models}
Here we explicate the construction of the contracted hypergraph. Recall that the interaction for the boolean Parity Ising model $\mcib$ on an edge $e$ imposes the constraint $\sum_{i \in e} s_i \equiv 0 \mod 2$. This linear constraint allows us to express any variable $s_v$ (for $v \in e$) as the sum of the remaining variables. Thus, the dependency map $\sigma$ is determined by the choice of a \emph{pivot vertex} $v \in e$.

\begin{proposition}[Induced Contraction for Boolean Parity Ising Models]
    \label{def:parity_contraction}
    Let $G = (V, E)$ be a hypergraph and let $e \in E$. Let $e = I \sqcup D$ where $D = \{v\}$ is the chosen pivot vertex and let $\sigma$ be the corresponding dependency map. Then $G/_{ \mcib, \sigma}e$ is the hypergraph $(V', E')$ defined as follows:
    \begin{itemize}
        \item The vertex set is $V' = V \setminus \{v\}$.
        \item The hyperedge multiset is $E' = \{ f \Delta_v e \mid f \in E \setminus \{e\} \}$,
    \end{itemize}
    where the \emph{pivot-based symmetric difference} is defined by:
    \[
    f \Delta_v e = \begin{cases}
        (f \Delta e) \setminus \{v\} & \text{if } v \in f \\
        f & \text{if } v \notin f
    \end{cases}.
    \]
\end{proposition}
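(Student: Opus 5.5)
The plan is to unwind Definition~\ref{def:induced_hyper_contraction} for the specific dependency map of $\mcib$, essentially reusing the computation \eqref{eq:bparisiidentity} from the proof of Proposition~\ref{prop:mcib_properties}, now in the vertex labels of $G$ rather than in $[k']$. The vertex set $V' = V \setminus D = V \setminus \{v\}$ is immediate from the definition, so everything reduces to identifying, for each $f \in E \setminus \{e\}$, the contracted edge $f^*$ produced by Proposition~\ref{prop:restricted_in_class}. With pivot $D = \{v\}$ and $I = e \setminus \{v\}$, the dependency map is $\sigma(\vecs_I) = \sum_{j \in I} s_j \pmod 2$. The extension $\hat\sigma$ therefore leaves every coordinate other than $v$ untouched and puts $\sum_{j\in I} s_j$ on $v$.

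I would split into two cases on whether $v \in f$.

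\textbf{Case $v \notin f$.} Here $\hat\sigma(\vecs)|_f = \vecs|_f$, so $\widetilde\phi^f = \PhibIs^{|f|}(\vecs|_f)$. By the uniqueness in Proposition~\ref{prop:contraction_closed_uniqueness}, $f^* = f$. One caveat: if $f = \emptyset$ the function is constant and the definition sets $f^* = \emptyset = f$, which is consistent.

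\textbf{Case $v \in f$.} Since $G$ is loopless, $v$ occurs once in $f$. The condition $\widetilde\phi^f(\vecs) = 1$ reads
\[
\sum_{i \in f \setminus \{v\}} s_i + \sum_{j \in e\setminus\{v\}} s_j \equiv 0 \pmod 2 .
\]
Coordinates in $(f\setminus\{v\}) \cap (e \setminus\{v\})$ appear twice and cancel mod $2$. What survives is exactly the sum over $(f \setminus\{v\}) \,\triangle\, (e\setminus\{v\}) = (f \triangle e) \setminus \{v\}$; this set equality holds because $v$ lies in both $e$ and $f$. Hence $\widetilde\phi^f(\vecs) = \PhibIs^{|f'|}(\vecs|_{f'})$ with $f' = (f\triangle e)\setminus\{v\} \subseteq (f\cup I)\setminus D$. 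Two subcases remain. If $f'$ is nonempty, $\PhibIs^{|f'|}$ is nonconstant, so Proposition~\ref{prop:contraction_closed_uniqueness} gives $f^* = f'$. If $f' = \emptyset$ (that is, $f = e$ as sets, a parallel edge), then $\widetilde\phi^f \equiv 1$ is constant and the definition sets $f^* = \emptyset = f'$, again consistent.

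Assembling both cases gives $E' = \{f \Delta_v e\}$ as multisets, which proves the proposition. The only delicate step is the constant/empty-edge bookkeeping in the parallel-edge subcase, together with checking that the symmetric-difference cancellation is legitimate. That cancellation uses looplessness of $f$ and $e$: each appears as a set, so no multiplicities survive.
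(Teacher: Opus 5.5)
Your proof is correct and is essentially the paper's argument. The paper just reads off $f^*$ from the choice $A' = A \setminus D$ or $A' = (A\setminus D)\,\triangle\, I$ in the contraction-closure part of the proof of Proposition~\ref{prop:mcib_properties}; your case split on whether $v \in f$ is that same computation written in the vertex labels of $G$, and your handling of empty and parallel edges (where $\widetilde\phi^f \equiv 1$ and $f^* = \emptyset$) fills in bookkeeping the paper leaves implicit.
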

\begin{proof}
    This is a direct result of construction of $A'$ in the proof of contraction closure in Proposition \ref{prop:mcib_properties}.
\end{proof}

\begin{remark}[Pivot Dependence]
    As noted in the general case, the structure of the contracted hypergraph $G/_{ \mcib, \sigma}e$ depends on the choice of dependency map $\sigma$ and thus on the choice of pivot vertex. For general hypergraphs, choosing different pivots yields non-isomorphic hypergraphs, while the partition functions of their corresponding hypergraphical models are the same.
\end{remark}

\subsubsection{Graphs with blisters and the Ising model in the presence of an external field}
\label{Section:blisters}
The ambiguity of the contraction operation vanishes when we restrict our attention to a specific subclass of hypergraphs that models the classical Ising model with an external field. In this case we shall show that, for a loopless graph $G = (V,E)$ and $e \in E$, the contracted graph $G/_{\mcib, \sigma}e$ corresponds to regular edge contraction known from graph theory and does not depend on the chosen pivot vertex. We shall extend this pivot independence to graphs with blisters and show that graph with blisters model Ising models in the presence of an external field.

\begin{definition}
    A \emph{loopless graph with blisters} is a loopless hypergraph $G=(V,E)$ where every hyperedge has cardinality $|e| \in \{1, 2\}$. Edges with $|e|=1$ are called \emph{blisters}.
\end{definition}

In the context of the boolean Parity Ising model, a blister $\{i\}$ yields the term $\exp(g_i s_i)$ in the maximum-entropy distribution of the model, which functions as an external field. To see this, let $G = (V,E)$ be a loopless graph with blisters. Furthermore, we decompose $E = V' \sqcup E'$ where $V' \subseteq E$ is the subset of blisters and $E'$ is the subset of edges, i.e. $|e| = 2$ for $e \in E'$. The maximum-entropy distribution of $\Mo(G; \mcib)$ is given by
\[
p(\vecs \mid \Mo(G; \mcib), \vecg) = \frac{1}{Z(\Mo; \vecg)}\left[ \prod_{\{i\} \in V'} \exp(g_{\{i\}} s_i) \right] \left[ \prod_{\{i,j\} \in E'} \exp(g_{\{i,j\}} s_{i}s_j) \right].
\]

\begin{proposition}[Uniqueness of Contraction for Graphs with Blisters]
    \label{prop:blister_unique}
    Let $G$ be a loopless graph with blisters and let $e = \{u, v\}$ be an edge of size 2. Let $\sigma^v$ and $\sigma^u$ be the dependency maps corresponding to the vertices $u$ and $v$, then both dependency maps yield isomorphic contracted graphs with blisters:
    \[ G /_{\mcib, \sigma^u} e \cong G /_{\mcib, \sigma^v} e. \]
    Specifically, the operation corresponds to the standard vertex identification of $u$ and $v$, where any blister on the removed vertex is transferred to the identified vertex.
\end{proposition}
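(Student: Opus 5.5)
The plan is to compute $G/_{\mcib,\sigma^v}e$ explicitly from Proposition~\ref{def:parity_contraction}, edge by edge, and check that the result is the vertex identification of $u$ and $v$. By symmetry the same computation with the roles of $u$ and $v$ swapped gives $G/_{\mcib,\sigma^u}e$. The isomorphism is then the relabeling that sends the surviving vertex $u$ in the first contraction to the surviving vertex $v$ in the second, and fixes every other vertex.

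Fix the pivot $v$, so $D=\{v\}$, $I=\{u\}$ and $V'=V\setminus\{v\}$. For each $f\in E\setminus\{e\}$ we compute $f\Delta_v e$, splitting by the size of $f$ and by whether $v\in f$.
\begin{enumerate}
\item If $v\notin f$, then $f\Delta_v e=f$, unchanged. This covers blisters on other vertices, edges avoiding $v$, and edges containing $u$ but not $v$.
\item If $f=\{v\}$ is a blister on $v$, then $f\Delta_v e=(\{v\}\Delta\{u,v\})\setminus\{v\}=\{u\}$. The blister is transferred to $u$.
\item If $f=\{v,w\}$ with $w\neq u$, then $f\Delta_v e=\{u,w\}$. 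This is exactly the edge $vw$ re-attached at the identified vertex.
\item If $f=\{u,v\}$ is parallel to $e$, then $f\Delta_v e=\emptyset$. Here $\widetilde\phi^f$ is constant, consistent with $f^*=\emptyset$ in Definition~\ref{def:induced_hyper_contraction}. This matches the standard fact that a parallel edge becomes a loop, and in the Parity setting a loop on one vertex is loopblind-reduced to the empty edge (Proposition~\ref{prop:edge_redundancy}).
\end{enumerate}
Since $G$ is loopless, these cases exhaust all $f$. In every case the result has size at most $2$, so the contraction is again a loopless graph with blisters, possibly with empty edges.

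The same computation with pivot $u$ gives vertex set $V\setminus\{u\}$. There, $v$ inherits the blister of $u$ and the edges $\{u,w\}$, and an edge $\{u,v\}$ again becomes $\emptyset$. Let $\pi$ map $u\mapsto v$ and fix all other vertices. Then $\pi$ sends each contracted edge $f\Delta_v e$ to $f\Delta_u e$, bijectively over $f\in E\setminus\{e\}$, preserving multiplicities. This gives the required hypergraph isomorphism, and it realizes both contractions as identification of $u$ and $v$ into a single vertex.

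The main obstacle is bookkeeping rather than anything conceptual. One must handle parallel copies of $e$ and coexisting blisters on $u$ and $v$: after contraction these become two blisters on the same vertex, which is allowed because $E'$ is a multiset. One should also confirm that ``isomorphic'' here means isomorphism of multiset hypergraphs, including any empty edges, so that the edge bijection $f\Delta_v e\mapsto f\Delta_u e$ is well defined.
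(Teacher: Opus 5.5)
Your proposal is correct and follows essentially the same route as the paper. Both compute the pivot-based symmetric difference $f\Delta_v e$ case by case: edges through the pivot re-attach to the surviving vertex, a blister on the pivot transfers to it, and parallel copies of $e$ become $\emptyset$; the other pivot is then handled by symmetry. Your explicit relabeling $u\mapsto v$, with the check that it carries $f\Delta_v e$ to $f\Delta_u e$ for every $f$, spells out the isomorphism that the paper leaves implicit.
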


\begin{proof}
    Consider the contraction at pivot $u$. The vertex set becomes $V \setminus \{u\}$.
    \begin{itemize}
        \item Any edge $f=\{u, z\}$ with $z\neq v$ incident to $u$ becomes:
        \[ f' = \{u, z\} \Delta \{u, v\} \setminus \{u\} = \{v, z\}. \]
        This replaces the connection to $u$ with a connection to $v$. If $f = \{u,v\}$ is an edge parallel to $e$, then $f$ becomes $f'=\emptyset$.
        \item Any blister $b=\{u\}$ at $u$ becomes:
        \[ b' = \{u\} \Delta \{u, v\} \setminus \{u\} = \{v\}. \]
        This transfers the blister from $u$ to $v$.
    \end{itemize}
    The symmetric argument holds for pivot $v$. Thus, contraction in graphs with blisters is canonically defined as vertex identification, summing parallel edges and blisters (modulo 2) where they arise.
\end{proof}

This recovers the standard behavior of the Ising model in an external field: contracting an edge identifies the vertices and adds their external fields.

\begin{remark}
   In \cite{zbMATH05956368}, a deletion contraction result for Ising models on graphs in the presence of an external field is obtained. In our framework this corresponds to a graph with blisters. If we consider a graph where all blisters are present and contract an edge $e = \{u,v\}$ the resulting graph will have two parallel blisters on the new vertex. Using Proposition \ref{prop:edge_redundancy} the two parallel blisters can be reduced to one. Following this procedure gives the contraction results obtained in \cite{zbMATH05956368} in a matroid-theoretic framework.
\end{remark}

\subsection{Delta Potts interaction family}
\label{sec:potts}

In this section, we apply our general framework to the Delta Potts interaction family $\mcp$ introduced previously. We explicitly verify its algebraic properties, identify the associated polymatroid, and derive the combinatorial contraction operation.

\begin{proposition}[Properties of $\mcp$]
    The Delta Potts interaction family $\mcp = (S, \{\Phi^k\}_{k \in \mathbb{Z}_{\geq 0}})$ is group-coset, globally satisfiable, functionally representable and contraction-closed.
\end{proposition}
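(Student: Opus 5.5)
We verify the four properties in turn, in the same style as Proposition~\ref{prop:mcib_properties}. For \textbf{group--coset}, we equip $S=[q]$ with the structure of the cyclic group $\mathbb{Z}/q\mathbb{Z}$ (identifying $[q]$ with $\{0,\dots,q-1\}$). Then $K_{\Phi^k}=\{(s,\dots,s)\mid s\in S\}$ is the diagonal subgroup of $S^k$, being the image of the diagonal homomorphism $S\to S^k$. It is therefore a coset of itself, and for $k=0$ it is all of $S^0$. For \textbf{global satisfiability}, any constant configuration $(s,\dots,s)$ satisfies $\delta_k=1$ for every $k$. Hence for every hypergraph $G$ the constant configuration on $V$ satisfies every hyperedge, so every induced model is globally satisfiable.

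For \textbf{functional representability} with $k\ge 1$, we take $I=\{1\}$, $D=\{2,\dots,k\}$ and $\sigma(s_1)=(s_1,\dots,s_1)$. This gives $K_{\Phi^k}=\{(s_1,\sigma(s_1))\}$, consistent with the shape forced by Proposition~\ref{prop:func_decomp_equiv}(1). For $k=0$ the decomposition is trivial. For $k=1$ we have $D=\emptyset$ and $K_{\Phi^1}=S$.

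The \textbf{contraction-closed} property is the only step requiring care. By Proposition~\ref{prop:independence_decomposition} it suffices to check the decomposition chosen above. Fix $k'\ge k$ and $A\subseteq[k']$. The map $\hat\sigma$ sends $\vecs\in S^{[k']\setminus D}$ to the configuration that equals $\vecs$ off $D$ and equals $s_1$ on $D$. We then define $A'$ by cases.
\begin{itemize}
\item If $A\cap D=\emptyset$, set $A'=A$.
\item If $A\cap D\neq\emptyset$, set $A'=(A\setminus D)\cup\{1\}$, a subset of $[k']\setminus D$.
\end{itemize}
In the second case $\hat\sigma(\vecs)|_A$ takes the values $s_1$ on $A\cap D$ and $s_i$ on $A\setminus D$. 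Hence all entries are equal if and only if all $s_i$, $i\in A'$, are equal, which gives $\Phi^{|A|}(\hat\sigma(\vecs)|_A)=\Phi^{|A'|}(\vecs|_{A'})$.

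The expected obstacle is bookkeeping at the degenerate edges, which is where the ``either constant or'' clause of Definition~\ref{def:contraction_closed} comes in. If $|A'|\le 1$, then $\Phi^{|A'|}$ is identically $1$, so the restricted function is constant and no uniqueness is needed. This covers the cases $A\subseteq D\cup\{1\}$ and $|A|\le 1$. Otherwise $|A'|\ge2$, the function is non-constant, and uniqueness follows from Proposition~\ref{prop:contraction_closed_uniqueness}.

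In hypergraph terms, contraction identifies all vertices of $e$ with a single representative. This is consistent with Example~\ref{ex:rankfunc_potts_graph}.
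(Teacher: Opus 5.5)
Your proof is correct and follows the paper's argument essentially verbatim: the diagonal subgroup for group--coset, a constant configuration for global satisfiability, a single independent pivot with the remaining coordinates copied from it for functional representability, and a case split on whether $A$ meets $D$ for contraction-closedness (the paper pivots on $k$ rather than $1$ and uses three cases, treating $A \subseteq [k]$ as the separate constant case). Your explicit appeal to Proposition~\ref{prop:independence_decomposition}, and your handling of $k=0$ and of $|A'|\le 1$, tidy up points the paper leaves implicit.
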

\begin{proof}
    The Delta Potts interaction family $\mcp$ satisfies the following properties:
    \begin{enumerate}
        \item \textbf{Group-Coset:} The solution set $K_{\Phi^k} = \{ (s, \dots, s) \mid s \in S \}$ corresponds to the diagonal subgroup of the abelian group $S^k$ (where $S$ is viewed as $\mathbb{Z}_q$). Thus, it is a group-coset interaction family.
        \item \textbf{Globally Satisfiable:} The constant configuration $\vecs = \veco$ satisfies $\Phi^k(\veco) = 1$ for all $k$. Thus, the interaction family is globally satisfiable.
        \item \textbf{Functionally Representable:} For $k \in \mathbb{Z}_{>0}$ the constraint $s_i = s_j$ for all $i,j \in [k]$ allows us to express all variables in $[k-1]$ as equal to $s_k$. We set the independent set $I=\{k\}$ and dependent set $D = [k-1]$, with dependency map $\sigma(s_k) = (s_k, \dots, s_k)$.
        \item \textbf{Contraction-closed:} Fix $k \geq 1$, $k' \geq k$, and the
decomposition $I = \{k\}$, $D = [k-1]$, $\sigma(s_k) = (s_k, \dots, s_k)$
above. The extended map $\hat\sigma \colon S^{[k'] \setminus D} \to S^{[k']}$
acts by
\[
    \hat\sigma(\vecs)|_i =
    \begin{cases}
        s_k & i \in [k-1], \\
        s_i & i \in \{k, k+1, \dots, k'\}.
    \end{cases}
\]
In particular, $\hat\sigma(\vecs)|_{[k]}$ is constantly equal to $s_k$. Let
$A \subseteq [k']$; then $\Phi^{|A|}(\hat\sigma(\vecs)|_A) = 1$ if and
only if all coordinates of $\hat\sigma(\vecs)|_A$ are equal. We
distinguish three cases.

\emph{Case 1: $A \cap [k] = \emptyset$.} Then $A \subseteq
\{k+1,\dots,k'\} \subseteq [k'] \setminus D$ and $\hat\sigma(\vecs)|_A =
\vecs|_A$, so $\Phi^{|A|}(\hat\sigma(\vecs)|_A) = \Phi^{|A|}(\vecs|_A)$
and we may take $A' = A$.

\emph{Case 2: $A \subseteq [k]$.} Then $\hat\sigma(\vecs)|_A$ is constantly
$s_k$, so $\Phi^{|A|}(\hat\sigma(\vecs)|_A) = 1$ for all $\vecs$, i.e.\ the
function is constant.

\emph{Case 3: $A \cap [k] \neq \emptyset$ and $A \not\subseteq [k]$.} The
coordinates of $\hat\sigma(\vecs)|_A$ indexed by $A \cap [k]$ are all
equal to $s_k$, while those indexed by $A \cap \{k+1,\dots,k'\}$ equal
$s_i$. All being equal is equivalent to $s_k = s_j$ for every $j \in
A \cap \{k+1,\dots,k'\}$, which is exactly
$\Phi^{|A'|}(\vecs|_{A'})$ for $A' := \{k\} \cup (A \cap
\{k+1,\dots,k'\}) \subseteq [k'] \setminus D$.

In each case the function is either constant or equals
$\Phi^{|A'|}(\vecs|_{A'})$ for some unique $A' \subseteq [k'] \setminus D$,
verifying contraction-closedness. \qedhere
\end{enumerate}
\end{proof}

\subsubsection{Polymatroids and partition functions}

Since $\mcp$ is a globally satisfiable group-coset interaction family satisfying functional representability and contraction--closure, Corollary~\ref{cor:rank_is_polymatroid} ensures that the rank function of any model $\Mo(G; \mcp)$ is a polymatroid.

The rank function of the Delta Potts model, like that of the boolean Parity Ising model, corresponds to a known polymatroid associated to hypergraphs. The hypergraphical polymatroid was introduced in~\cite{Helg} under the name \emph{chromatic hypermatroid}.
\begin{definition}
    Let $G=(V,E)$ be a loopless hypergraph with $n=|V|$. The \emph{hypergraphical polymatroid} is defined via the following rank function:
    \[
    \chi_G(A) = n - \kappa(G_A) \quad \text{for } A \subseteq E,
    \]
    where $\kappa(G_A)$ is the number of connected components in the spanning subhypergraph $(V, A)$.
\end{definition}
See e.g. \cite{Vertigan_Whittle_1993} for further information about hypergraphical polymatroids.

\begin{proposition}[Delta Potts Rank Function]
\label{prop:delta_potts_rank}
    Let $G=(V,E)$ be a loopless hypergraph with $n=|V|$. The rank function $r_{\Mo(G; \mcp)}$ associated with the Delta Potts model is the hypergraphical polymatroid
    \[ r_{\Mo}(A) = n - \kappa(G_A) = \chi_G(A)  \quad \text{for } A \subseteq E. \]
\end{proposition}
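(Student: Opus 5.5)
The plan is to compute the counting function $R_{\Mo}(A)$ directly and show that it equals $q^{\kappa(G_A)}$. Taking $\log_q$ then gives $r_{\Mo}(A) = n - \kappa(G_A)$ immediately from Definition~\ref{def:rankfunc}. This mirrors Example~\ref{ex:rankfunc_potts_graph} for graphs; the only new point is that hyperedges of arbitrary size must be handled through the hypergraph notion of connected component. Since $G$ is loopless, every hyperedge $f \in A$ is an honest subset of $V$. By Definition~\ref{def:modinst}, $\phi^f(\vecs) = \delta_{|f|}(\vecs|_f)$, which equals $1$ exactly when $\vecs$ is constant on $f$. Hence
\[
R_{\Mo}(A) = \#\{\vecs \in [q]^V \mid \vecs \text{ is constant on } \overline{f} \text{ for every } f \in A\}.
\]
Empty hyperedges and blisters impose no constraint, which is consistent with their effect on $\kappa(G_A)$.

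The key step is to show that $\vecs$ is constant on every $\overline{f}$ with $f \in A$ if and only if $\vecs$ is constant on each connected component of the spanning subhypergraph $G_A = (V,A)$.

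For the forward direction, I will use the path characterisation of components. Suppose $v_1, v_2$ lie in the same component, joined by hyperedges $e_1, \dots, e_k \in A$ with $v_1 \in e_1$, $v_2 \in e_k$, and $\overline{e_i} \cap \overline{e_{i+1}} \neq \emptyset$. Then $\vecs$ is constant on each $\overline{e_i}$. Consecutive hyperedges share a vertex, so these constants agree along the chain, and therefore $s_{v_1} = s_{v_2}$.

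For the converse, I will use the first definition of a component: every $\overline{f}$ with $f \in A$ lies entirely inside a single component. A configuration that is constant on components is therefore constant on each $\overline{f}$.

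With this equivalence, the admissible configurations are in bijection with the functions from the set of components of $G_A$ to $[q]$. This gives $R_{\Mo}(A) = q^{\kappa(G_A)} > 0$, so the rank is finite and equals $n - \kappa(G_A) = \chi_G(A)$.

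I expect no real obstacle. The only step needing care is the equivalence between the two descriptions of connected components, namely the minimal closed sets versus the chain-of-hyperedges relation, which the paper states as equivalent. I will simply invoke that equivalence, checking that it is applied to $G_A$ with vertex set $V$ so that isolated vertices are counted as components, each contributing a free factor of $q$.
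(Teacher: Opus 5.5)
Your proposal is correct and follows the same route as the paper. Both arguments show that the configurations satisfying every constraint in $A$ are exactly those constant on each connected component of $G_A$. This gives $R_{\Mo}(A) = q^{\kappa(G_A)}$ and hence $r_{\Mo}(A) = n - \kappa(G_A)$. You simply spell out the two directions of that equivalence, via the chain and closed-set descriptions of components, which the paper asserts in a single line.
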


\begin{proof}
    The counting function $R_{\Mo}(A)$ counts the number of configurations such that $s_i = s_j$ for all edges $e \in A$. This condition forces all spins within any connected component of the hypergraph $(V, A)$ to be identical. Since there are $\kappa(G_A)$ such components and $q$ choices for the spin value of each component, we have $R_{\Mo}(A) = q^{\kappa(G_A)}$.
    The rank function is therefore:
    \[ r_{\Mo}(A) = n - \log_q(R_{\Mo}(A)) = n - \kappa(G_A). \]
\end{proof}

\begin{remark}[Connection to the Poincar\'e polynomial]
\label{rem:delta_potts_poincare}
By Proposition~\ref{prop:rankpoly_specializations}(1) applied to $r_{\Mo(G; \mcp)} = \chi_G$, the rank generating function with all edge variables specialized to $\eta - 1$ recovers the Poincar\'e polynomial of the hypergraphical polymatroid:
\[
    \widetilde Z\bigl(q,\, (\eta - 1)_{e \in E};\, \chi_G\bigr)
    \;=\;
    q^{-\chi_G(E)} \, \tau(q, \eta;\, \chi_G).
\]
Combining with Proposition~\ref{prop:boolmodcountex} and the identity $\chi_G(E) = n - \kappa(G)$ gives
\[
    Z(\Mo(G; \mcp); \vecg)\big|_{v_e = \eta - 1}
    \;=\;
    q^{\kappa(G)} \, \tau(q, \eta;\, \chi_G).
\]
Evaluating $\tau(q, \eta;\, \chi_G)$ at $\eta = 0$, or equivalently $v_e =-1$, recovers the chromatic-polynomial identity from \cite[Theorem~3.14]{Helg}: the number of weak $q$-colorings of $G$ equals $q^{\kappa(G)} \cdot \tau(q, 0;\, \chi_G)$. We can see this directly from Theorem~\ref{theo:counteval}: a configuration $\vecs \in [q]^V$ with $\phi^e(\vecs) = 0$ for every $e \in E$ is precisely one in which no edge is monochromatic, i.e.\ a weak $q$-coloring of $G$. In~\cite{Helg} the counting result is stated for connected hypergraphs, where $q^{\kappa(G)} = q$ and the formula reduces to $q \cdot \tau(q,0;\, \chi_G)$.
\end{remark}

\subsubsection{Deletion-contraction}

We now define the specific contraction operation on hypergraphs induced by the functional representation of the Delta Potts model.

\begin{proposition}[Induced Hypergraph Contraction for Delta Potts]
    \label{prop:potts_contraction}
    Let $G=(V,E)$ be a hypergraph and let $e \in E$. Let $e = I \sqcup D$ be a decomposition with respect to $\mcp$ where $I = \{v\}$ is a chosen pivot vertex and $D = e \setminus \{v\}$ and let $\sigma$ be the corresponding dependency map. Then $G/_{\mcp, \sigma}e$ is the hypergraph $(V', E')$ defined as follows:
    \begin{itemize}
        \item The vertex set is $V' = V \setminus D$.
        \item The hyperedge multiset is $E' = \{ \pi(f) \mid f \in E \setminus \{e\} \}$,
    \end{itemize}
    where $\pi$ is the projection map that identifies all vertices in $D$ with $v$:
    \[
    \pi(u) = \begin{cases}
        v & \text{if } u \in D, \\
        u & \text{otherwise}.
    \end{cases}
    \]
\end{proposition}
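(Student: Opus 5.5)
The plan is to unwind Definition~\ref{def:induced_hyper_contraction} for $\mcp$, using the decomposition $I=\{v\}$, $D=e\setminus\{v\}$ with $\sigma(s_v)=(s_v,\dots,s_v)$. This is the decomposition used in the proof that $\mcp$ is contraction-closed, after relabelling so that $v$ plays the role of the index $k$ and $D$ plays the role of $[k-1]$. The vertex set $V'=V\setminus D$ is immediate from the definition. The remaining work is to show that, for each $f\in E\setminus\{e\}$, the contracted hyperedge $f^*$ from Proposition~\ref{prop:restricted_in_class} coincides with $\pi(f)$, up to the convention for constant restricted interactions.

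First, compute the restricted interaction. By Definition~\ref{def:restricted_interaction}, $\hat\sigma(\vecs)$ agrees with $\vecs$ off $D$ and places $s_v$ on every vertex of $D$. Hence $\hat\sigma(\vecs)|_f$ is exactly the tuple $(s_{\pi(u)})_{u\in f}$. It follows that $\widetilde\phi^f(\vecs)=1$ if and only if all $s_{\pi(u)}$ with $u\in f$ coincide. Since repeated vertices do not affect whether all values are equal (loopblindness of $\mcp$), this holds if and only if $\Phi^{|\pi(f)|}(\vecs|_{\pi(f)})=1$, where $\pi(f)$ is read as a set.

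Next, split into the three cases of the contraction-closedness proof, transported to $f\cup e$.
\begin{itemize}
\item If $f\cap e=\emptyset$, then $\pi(f)=f$ and $f^*=f$.
\item If $f\cap e\neq\emptyset$ and $f\not\subseteq e$, then $\pi(f)=\{v\}\cup(f\setminus e)$. This is exactly the set $A'$ of Case~3, so uniqueness (Proposition~\ref{prop:contraction_closed_uniqueness}) gives $f^*=\pi(f)$.
\item If $f\subseteq e$, then $\pi(f)=\{v\}$ is a single vertex and $\widetilde\phi^f\equiv 1$ is constant. The definition then sets $f^*=\emptyset$.
\end{itemize}

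The main obstacle is reconciling this third case with the statement's $\pi(f)=\{v\}$. The resolution is that a single-vertex Potts hyperedge (blister) has $\Phi^1\equiv 1$, so its interaction is constant. Such a hyperedge contributes a factor $1$ to every counting function, exactly as an empty hyperedge does (Proposition~\ref{prop:edge_redundancy}). The contracted models, and hence their rank functions and partition functions, therefore coincide. I would state that $\pi(f)$ and $f^*=\emptyset$ are identified under this convention, or note that the proposition holds up to replacing constant edges $\{v\}$ by $\emptyset$.
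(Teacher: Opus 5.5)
This is essentially the paper's route. The paper gives no separate proof here; as with the Parity Ising analogue, it relies on reading off $f^*$ from the three cases in the contraction-closedness proof for $\mcp$, and that is exactly what you do.

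Your caveat for $f\subseteq e$ is genuine, and the paper glosses over it. Definition~\ref{def:induced_hyper_contraction} sets $f^*=\emptyset$ whenever $\widetilde\phi^f$ is constant. So the statement only holds after identifying such edges with $\emptyset$. This is harmless, because $\Phi^0\equiv\Phi^1\equiv 1$ for Delta Potts, so the induced interaction functions coincide.

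Your first case has a slip, though. A blister $f=\{u\}$ with $u\notin e$ has $\widetilde\phi^f=\Phi^1\equiv 1$, which is constant. The definition therefore gives $f^*=\emptyset\neq f=\pi(f)$, not $f^*=f$ as you claim. The paper's Case~1, ``take $A'=A$'', is sloppy in the same way.

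The clean formulation is:
\begin{itemize}
\item $f^*=\pi(f)$ exactly when $|\pi(f)|\ge 2$; here non-constancy, and hence the uniqueness you invoke, needs $q\ge 2$.
\item $f^*=\emptyset$ when $|\pi(f)|\le 1$, with your identification covering all of these edges, not just those with $f\subseteq e$.
\end{itemize}
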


\begin{remark}[Relation to the hypergraph Tutte polynomial]
\label{rem:potts_contraction_bem}
In~\cite{Berrekkal2026} a hypergraph Tutte polynomial is introduced,
defined for a hypergraph $H = (V,E)$ by
\[
    T_{\mathrm{HG}}(H; x, y)
    \;=\;
    \sum_{A \subseteq E} (x-1)^{\kappa(G_A) - \kappa(H)}\,
    (y-1)^{d(A) - |A| - |V| + \kappa(G_A)},
\]
where $d(A) = \sum_{e \in A} |e|$ is the total edge degree. This relates
to our rank generating function for the Delta Potts model on
hypergraphs through a degree-dependent specialization of the edge
variables, which absorbs the hyperedge degrees into the variables rather
than requiring uniformity. Since the Delta Potts rank function is the
hypergraphical polymatroid, $r_\Mo(A) = \chi_G(A) = |V| - \kappa(G_A)$
(Proposition~\ref{prop:delta_potts_rank}), setting $q = (x-1)(y-1)$ and
$v_e = (y-1)^{|e|-1}$ for each $e \in E$ gives
\[
    \prod_{e \in A} (y-1)^{|e|-1} = (y-1)^{d(A) - |A|},
    \qquad
    q^{-\chi_G(A)} = (x-1)^{\kappa(G_A) - |V|} (y-1)^{\kappa(G_A) - |V|},
\]
so that
\[
    T_{\mathrm{HG}}(H; x, y)
    \;=\;
    (x-1)^{|V| - \kappa(H)}\,
    \widetilde Z\!\bigl( (x-1)(y-1),\, ((y-1)^{|e|-1})_{e \in E};\, \chi_G \bigr).
\]

In~\cite{Berrekkal2026} they also establish a deletion--contraction recurrence for the hypergraph Tutte polynomial. Their hypergraph contraction operation is defined on general hypergraphs, i.e. not loopless hypergraphs. By absorbing the edge multiplicities into the parameters one can reduce their contraction to a contraction operation on loopless hypergraphs and one finds that their deletion--contraction recurrence, in fact, agrees with ours.   
\end{remark}

\subsection{And Ising interaction family}
\label{sec:AndIsing}

In this section, we examine the And Ising interaction family, which exhibits a distinct combinatorial structure related to the boolean polymatroid.

\begin{proposition}[Properties of $\mca$]
    The And Ising interaction family $\mca$ is group-coset, globally satisfiable, functionally representable and contraction-closed.
\end{proposition}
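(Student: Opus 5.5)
I will verify the four properties directly, mirroring the treatment of $\mcib$ in Proposition~\ref{prop:mcib_properties} and of $\mcp$ above. Throughout, $S=\{0,1\}$ and $\PhiAnd^k(\vecs)=1$ iff $s_i=1$ for all $i\in[k]$. The key observation is that $K_{\PhiAnd^k}=\{(1,\dots,1)\}$ is a single point.

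\textbf{Group--coset.} I view $S$ as $\mathbb{Z}/2\mathbb{Z}$. Then $K_{\PhiAnd^k}=\{\vecone\}$ is the coset $\vecone+\{\veco\}$ of the trivial subgroup of $S^k$. This holds also for $k=0$, where $S^0$ is a single point and $\PhiAnd^0\equiv 1$ as an empty product.

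\textbf{Globally satisfiable.} I take $s=1$: then $\PhiAnd^k(1,\dots,1)=1$ for every $k$, so every induced model is satisfied by the all-ones configuration.

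\textbf{Functional representability.} Since $|K_{\PhiAnd^k}|=1=2^0$, I take $I=\emptyset$, $D=[k]$, and the constant dependency map $\sigma()=(1,\dots,1)$.

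\textbf{Contraction-closedness.} This is the only step with real content. Fix $k'\ge k$ and let $\hat\sigma$ set every coordinate in $[k]$ to $1$ and leave coordinates $k+1,\dots,k'$ free. For $A\subseteq[k']$, the product $\prod_{i\in A}\hat\sigma(\vecs)_i$ equals $\prod_{i\in A\setminus[k]}s_i$, because the factors indexed by $A\cap[k]$ are all $1$.
\begin{itemize}
\item If $A\subseteq[k]$, this function is constantly $1$.
\item Otherwise, it equals $\PhiAnd^{|A'|}(\vecs|_{A'})$ with $A'=A\setminus D=A\setminus[k]\subseteq[k']\setminus D$.
\end{itemize}
Uniqueness of $A'$ then follows from Proposition~\ref{prop:contraction_closed_uniqueness}. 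By Proposition~\ref{prop:independence_decomposition}, and because Proposition~\ref{prop:func_decomp_equiv} forces $I=\emptyset$ anyway, I only need to treat this one decomposition.

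The main obstacle is bookkeeping rather than mathematics. I must make sure the degenerate cases, namely $k=0$, $I=\emptyset$, and $A$ entirely inside $D$, are consistent with the definitions: the constant case is explicitly allowed in Definition~\ref{def:contraction_closed}, and an empty $S^{|I|}$ domain gives a single point. I will also note the geometric consequence, which the next results presumably make explicit: contracting $e$ deletes all vertices of $e$ and replaces each $f$ by $f\setminus e$, with $f^*=\emptyset$ when $f\subseteq e$.
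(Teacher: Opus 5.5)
Your proposal is correct and follows the paper's proof essentially step for step. The paper makes the same four moves: $\{\vecone\}$ as the coset $\vecone+\{\veco\}$, the all-ones configuration for global satisfiability, the decomposition $I=\emptyset$, $D=[k]$ with constant $\sigma$, and $A'=A\setminus[k]$ (or the constant case $A\subseteq[k]$) for contraction-closedness. Your added remarks on $k=0$ and on the decomposition being forced by $|K_{\PhiAnd^k}|=1$ are valid refinements that the paper leaves implicit.
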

\begin{proof}
    The And Ising interaction family satisfies the following properties:
    \begin{enumerate}
        \item \textbf{Group-Coset:} The solution set $K_{\Phi^k} = \{ (1, \dots, 1) \}$ is a singleton. Viewed in the group $\mathbb{Z}_2^k$, this is the coset $\vecone + \{\veco\}$, where $\{\veco\}$ is the trivial subgroup.
        \item \textbf{Globally Satisfiable:} The configuration $\vecs = \vecone$ satisfies all interactions.
        \item \textbf{Functionally Representable:} Let $k \in \mathbb{Z}_{>0}$. The constraint $\Phi^k(\vecs) = \prod_{i=1}^k s_i = 1$ forces $s_i = 1$ for all $i$. We choose the independent set $I = \emptyset$ and dependent set $D = [k]$, with the constant dependency map $\sigma(\emptyset) = \vecone$.
        \item \textbf{Contraction-closed:} Fix $k \geq 0$, $k' \geq k$, and the
decomposition $I = \emptyset$, $D = [k]$, $\sigma(\emptyset) = \vecone \in
S^k$ above. The extended map $\hat\sigma \colon S^{[k'] \setminus D} \to
S^{[k']}$ acts by
\[
    \hat\sigma(\vecs)|_i =
    \begin{cases}
        1 & i \in [k], \\
        s_i & i \in \{k+1, \dots, k'\}.
    \end{cases}
\]
For any $A \subseteq [k']$,
\[
    \Phi^{|A|}_{\mathrm{And-Is}}(\hat\sigma(\vecs)|_A)
    = \prod_{i \in A} \hat\sigma(\vecs)|_i
    = \prod_{i \in A \cap [k]} 1 \;\cdot\; \prod_{i \in A \setminus [k]} s_i
    = \prod_{i \in A \setminus [k]} s_i.
\]
If $A \subseteq [k]$, the product is empty and the function is constant. Otherwise we set $A' := A \setminus [k] \subseteq [k'] \setminus D$; which equals $\Phi^{|A'|}_{\mathrm{And-Is}}(\vecs|_{A'})$. \qedhere
\end{enumerate}
\end{proof}

\subsubsection{Polymatroids and partition functions}
Since $\mca$ satisfies the hypotheses of Corollary~\ref{cor:rank_is_polymatroid}, the rank function of any model $\Mo(G; \mca)$ is a polymatroid. Once again, this polymatroid is a known polymatroid associated to hypergraphs. The boolean polymatroid was introduced in~\cite{Helg} under the name \emph{covering hypermatroid}.

\begin{definition}
     Let $G=(V,E)$ be a hypergraph. The \emph{boolean polymatroid} associated to $G$ is defined as
     \[
     \rho_G(A) = |\overline{A}| = \left| \bigcup_{e \in A} e \right|.
     \]
\end{definition}

\begin{proposition}[Boolean Rank Function]
\label{prop:and_ising_rank}
    Let $\Mo(G; \mca)$ be a hypergraphical model for some hypergraph $G = (V,E)$ with $|V| = n$. The rank function $r_{\Mo(G; \mca)}$ equals the rank function of the \emph{boolean polymatroid}:
    \[ r_{\Mo}(A) = |\overline{A}| = \left| \bigcup_{e \in A} e \right| = \rho_G(A). \]
\end{proposition}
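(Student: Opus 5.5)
The plan is to compute the counting function $R_{\Mo}(A)$ directly from Definition~\ref{def:rankfunc}, exactly as in the proof of Proposition~\ref{prop:delta_potts_rank}, and then convert it to the rank. Here $S=\{0,1\}$, so $q=2$. For each $e\in A$ the constraint $\phi^e(\vecs)=\prod_{i\in e}s_i=1$ holds if and only if $s_i=1$ for every $i\in e$. By Proposition~\ref{prop:loopless_reduction} and the loopblindness of $\mca$ we may assume $G$ is loopless; even if it is not, repeated vertices do not affect the product since $s_i^{m_i}=s_i$. Consequently a configuration lies in $K_A=\bigcap_{e\in A}K_{\phi^e}$ if and only if $s_i=1$ for all $i\in\overline{A}=\bigcup_{e\in A}e$.

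The next step is to count these configurations. The spins on $\overline{A}$ are forced to equal $1$, and the spins on $V\setminus\overline{A}$ are unconstrained, because no edge of $A$ meets them. Hence
\[
R_{\Mo}(A)=2^{\,n-|\overline{A}|}>0,
\]
and the case $r=\infty$ never occurs. This is consistent with global satisfiability via the all-ones configuration. Substituting into the definition gives
\[
r_{\Mo}(A)=n-\log_2 2^{\,n-|\overline{A}|}=|\overline{A}|=\rho_G(A).
\]

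As a sanity check I would confirm the edge cases. For $A=\emptyset$ the union is empty and the rank is $0$, matching normalization. An empty hyperedge contributes nothing to $\overline{A}$ and is indeed a constant-$1$ constraint, since the empty product equals $1$.

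There is no real obstacle in this argument. The only point needing care is the identification of the solution set of the conjunction of AND-constraints with the configurations that are $1$ on the union of the edges. For multiset edges, this identification relies on the idempotence $s_i^{m_i}=s_i$.
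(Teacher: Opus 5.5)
Your proposal is correct and follows the paper's own proof essentially verbatim: you show that $K_A$ consists exactly of the configurations equal to $1$ on $\overline{A}$, so $R_{\Mo}(A)=2^{n-|\overline{A}|}$ and therefore $r_{\Mo}(A)=|\overline{A}|=\rho_G(A)$. Your extra remarks on multiset edges (via $s_i^{m_i}=s_i$) and on empty edges are accurate refinements that the paper leaves implicit.
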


\begin{proof}
    The counting function $R_{\Mo}(A)$ counts configurations where $s_i=1$ for all $i \in e$ for every $e \in A$. This requires $s_v = 1$ for all $v \in \bigcup_{e \in A} e = \overline{A}$. The spins for vertices not in $\overline{A}$ are unconstrained. 
    Thus, there are $2^{n - |\overline{A}|}$ satisfying configurations.
    The rank function is:
    \[ r_{\Mo}(A) = n - \log_2(2^{n - |\overline{A}|}) = |\overline{A}| = \rho_G(A). \]
\end{proof}

\begin{remark}[Connection to the Poincar\'e polynomial]
\label{rem:and_ising_poincare}
By Proposition~\ref{prop:rankpoly_specializations}(1) applied to $r_{\Mo(G; \mca)} = \rho_G$, the rank generating function with all edge variables specialized to $\eta - 1$ recovers the Poincar\'e polynomial of the boolean polymatroid:
\[
    \widetilde Z\bigl(2,\, (\eta - 1)_{e \in E};\, \rho_G\bigr)
    \;=\;
    2^{-\rho_G(E)} \, \tau(2, \eta;\, \rho_G).
\]
Combining with Proposition~\ref{prop:boolmodcountex} gives
\[
    Z\bigl(\Mo(G; \mca); \vecg\bigr)\Big|_{v_e = \eta - 1}
    \;=\;
    2^{|V| - \rho_G(E)} \, \tau(2, \eta;\, \rho_G).
\] 
Setting $\eta = 0$, or equivalently $v_e = -1$, recovers the evaluation
result from~\cite[Corollary~3.10]{Helg}: $2^{|V| - \rho_G(E)}\tau(2,0;\, \rho_G)$ counts
the transversal sets of $G$, i.e.\ the subsets $T \subseteq V$ with
$T \cap e \neq \emptyset$ for every $e \in E$. The correspondence is
direct: for $\vecs \in \{0,1\}^V$, the And interaction
$\phi^e(\vecs) = \prod_{i \in e} s_i$ vanishes iff $e$ contains some
$i$ with $s_i = 0$, so $\phi^e(\vecs) = 0$ for every $e \in E$ iff
the zero-set $\{i \in V \mid s_i = 0\}$ is a transversal of $G$. By
Theorem~\ref{theo:counteval}, this count is the $v_e \equiv -1$
evaluation of the partition function. In~\cite{Helg} the counting result is stated for hypergraphs without isolated vertices, where $\rho_G(E) = |V|$ and the formula reduces to $\tau(2,0;\, \rho_G)$.
\end{remark}

\subsubsection{Deletion-contraction}

\begin{proposition}
    \label{prop:and_contraction}
    Let $G=(V,E)$ be a hypergraph and $e \in E$. Let $\sigma$ be a dependency map of $e$ with respect to $\mca$. Then $G/_{\mca, \sigma}e$ is the hypergraph defined by removing the vertices of $e$.
    
    Formally, the contracted hypergraph has vertex set $V' = V \setminus e$ and edge multiset:
    \[ E' = \{ f \setminus e \mid f \in E \setminus \{e\} \}. \]
\end{proposition}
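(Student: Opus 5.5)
The plan is to read off the contracted hypergraph from Definition~\ref{def:induced_hyper_contraction}, using the contraction-closure computation in the preceding proposition on the properties of $\mca$. First I pin down the decomposition. By the functional representability argument for $\mca$, the solution set $K_{\Phi^{k}}$ is the singleton $\{\vecone\}$, so it has cardinality $2^0$. Hence $|I|=0$ for any witnessing decomposition; this follows from Proposition~\ref{prop:func_decomp_equiv}(2), or directly from counting. So necessarily $I=\emptyset$, $D=e$ and $\sigma(\emptyset)=\vecone$, and $\sigma$ is unique. The vertex set of $G/_{\mca,\sigma}e$ is therefore $V\setminus D = V\setminus e$, as claimed.

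Next I compute the contracted hyperedges. Let $f\in E\setminus\{e\}$ and consider the extension $\hat\sigma\colon S^{V\setminus e}\to S^V$ of Definition~\ref{def:restricted_interaction}. It sets every spin in $e$ to $1$ and leaves the other spins unchanged. The restricted interaction is then
\[
\widetilde\phi^f(\vecs)=\prod_{i\in f}\hat\sigma(\vecs)|_i=\prod_{i\in f\cap e}1\cdot\prod_{i\in f\setminus e}s_i=\prod_{i\in f\setminus e}s_i .
\]
This is the computation already done in the contraction-closure part of the properties proposition, transported through the enumeration used in Proposition~\ref{prop:restricted_in_class}.

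I then split into two cases. If $f\setminus e\neq\emptyset$, then $\widetilde\phi^f=\PhiAnd^{|f\setminus e|}(\vecs|_{f\setminus e})$, which is nonconstant. By the uniqueness in Proposition~\ref{prop:contraction_closed_uniqueness}, this forces $f^*=f\setminus e$. If $f\setminus e=\emptyset$, i.e.\ $f\subseteq e$, then $\widetilde\phi^f\equiv 1$ is constant. Definition~\ref{def:induced_hyper_contraction} sets $f^*=\emptyset$ in this case, which again equals $f\setminus e$. In both cases $f^*=f\setminus e$, so $E'=\{f\setminus e\mid f\in E\setminus\{e\}\}$ as a multiset.

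The only step that needs care is this degenerate case $f\subseteq e$. There I must check that the constant convention $f^*=\emptyset$ agrees with the formula $f\setminus e$; it does, since empty edges are allowed in the edge multiset. The rest is direct substitution.
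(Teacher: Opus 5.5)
Your proof is correct and matches the paper's approach. The paper gives no separate proof and treats the statement as a direct read-off of the contraction-closure computation for $\mca$, where $\hat\sigma$ sets the spins of $e$ to $1$ and one takes $A' = A\setminus[k]$ unless the function is constant. You do exactly this, and you also make explicit two points the paper leaves implicit: the decomposition is forced to be $I=\emptyset$, $D=e$, and the degenerate case $f\subseteq e$ gives $f^*=\emptyset=f\setminus e$.
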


\section{Discussion}
\label{sec:discussion}
In this paper we developed a rigorous framework for statistical mechanics models on hypergraphs. We defined \emph{interaction families} that systematically induce a hypergraphical model on every hypergraph. Restricting to boolean loopblind interaction families, we identified two structural conditions, functional representability and contraction-closedness, that together yield a Tutte-like deletion--contraction recurrence for the partition function (Theorem~\ref{theo:contraction_closure}, Corollary~\ref{cor:partition_delcon}). We further identified two conditions, group--coset structure and global satisfiability, that together with the structural conditions force the rank function of any induced model to be a polymatroid (Corollary~\ref{cor:rank_is_polymatroid}).

We illustrated the theory on three boolean interaction families generalizing classical graph models to hypergraphs. All three satisfy all four conditions, and the resulting polymatroids are three distinct classical combinatorial structures associated to a hypergraph. Table~\ref{tab:models} summarizes these examples.

\begin{table}[ht]
\centering
\caption{The three interaction families studied in this paper. In each case the partition function is related to a multivariate polynomial associated to the (poly)matroid: the multivariate Tutte polynomial of the incidence matroid for boolean Parity Ising (Remark~\ref{rem:parity_tutte}), and a multivariate version of the Poincar\'e polynomial of the hypergraphical and boolean polymatroids for Delta Potts and And Ising respectively (Remarks~\ref{rem:delta_potts_poincare} and~\ref{rem:and_ising_poincare}). Under a degree-dependent specialization of the edge variables, the Delta
Potts rank generating function also recovers the hypergraph Tutte
polynomial of~\cite{Berrekkal2026}
(Remark~\ref{rem:potts_contraction_bem}).}
\label{tab:models}
\renewcommand{\arraystretch}{1.4}
\begin{tabular}{c l c c}
\hline\hline
 & Interaction $\Phi^k(\vecs)$
 & Name
 & Polymatroid \\
\hline
\ref{Sec:Isingmodel}
 & $\delta(\sum_{i=1}^k s_i \bmod 2,0)$
 & boolean Parity Ising
 & incidence matroid \\[4pt]
\ref{sec:potts}
 & $\delta_k(\vecs)$
 & Delta Potts
 & hypergraphical \\[4pt]
\ref{sec:AndIsing}
 & $\displaystyle\prod_{i=1}^{k} s_i$
 & And Ising
 & boolean \\
\hline\hline
\end{tabular}
\end{table}

On graphs, the boolean Parity Ising and Delta Potts ($q=2$) interaction families are isomorphic as hypergraphical models and have identical rank functions, but on hypergraphs they correspond to different polymatroids, namely the incidence matroid versus the hypergraphical polymatroid. This is consistent with our non-isomorphism result (Proposition~\ref{prop:potts_parity_noniso}) and shows that the multivariate Tutte polynomial of a graph admits at least two genuinely distinct hypergraph generalizations within our framework, depending on which classical graph model one chooses to lift.

The framework also offers a unifying view on classical hypergraph polynomial invariants. For the Delta Potts and And Ising families, the rank generating function is a multivariate version of the Poincar\'e polynomial of the associated polymatroid (Remarks~\ref{rem:delta_potts_poincare} and~\ref{rem:and_ising_poincare}); for the boolean Parity Ising family, the rank function is a matroid, and the rank generating function is the multivariate Tutte polynomial of the binary incidence matroid (Remark~\ref{rem:parity_tutte}). Specializations of the rank generating function at $v_e = -1$ for the Delta Potts and And Ising interaction families recover hypergraph counting identities for weak colorings and transversal sets.

Several further results follow from our framework. First, the partition function of a boolean hypergraphical model only depends on its rank function (Corollary~\ref{cor:rank_determines_partition}). For the Parity Ising family the rank function is the incidence matroid (Proposition~\ref{prop:parity_rank_equals_matroid}) and all hypergraphs whose incidence matrices represent the same binary matroid form a class with a common partition function, recovering and explaining the gauge invariance of~\cite{spinmod} (Remark~\ref{rem:gauge}). Second, for the boolean Parity Ising family, external fields are modeled in our framework by blisters and our deletion--contraction on graphs with blisters recovers, in matroid-theoretic form, the results of~\cite{zbMATH05956368} about the Ising model in the presence of an external field
(Section~\ref{Section:blisters}).

\subsection*{Future directions}

Several natural directions arise from this work. Functional representability and con\-traction-closedness are sufficient for the deletion--contraction recurrence, and group--coset structure together with global satisfiability are sufficient for the rank function to be a polymatroid. We have not investigated whether any of these conditions are necessary, and a natural next step is to determine which are required for a well-defined contraction relation and a polymatroid rank function, and which can be relaxed. Relatedly, the three boolean families studied here all satisfy every property we introduced, and a broader question is how rich the landscape of interaction families satisfying these conditions actually is. Finally, our results rely on interaction functions taking values in $\{0,1\}$. 
The vector Potts family (Example~\ref{ex:vectorpotts}) is the most immediate example outside of this setting, for which the interaction functions can take more than two values (the XY~and Blume-Emery-Griffiths models are other examples).
Investigating whether an analogous deletion--contraction and polymatroid framework exists for non-boolean families is a natural direction for future work.

\begin{ack}
CdM is grateful to Italo Simonelli for insightful discussions.
\end{ack}

\begin{funding}
Merijn Moody is funded by the Dutch Institute for Emergent Phenomena (DIEP)
at the University of Amsterdam, via the DIEP programme Foundations and Applications of Emergence (FAEME). Khallil Berrekkal is supported by the Netherlands Organisation for Scientific Research (NWO) under grant VI.Vidi.193.068.
\end{funding}

\bibliographystyle{emss} 
\bibliography{ref} 
\end{document}